\newcommand{\diam}{\operatorname{diam}}
\newcommand{\pr}{\mathbb{P}}
\newcommand{\argmin}{\operatornamewithlimits{\arg\min}}
\newcommand{\ind}{\mathds{1}}
\newcommand{\PP}{\mathbb{P}}
\newcommand{\RR}{\mathbb{R}}
\newcommand{\EE}{\mathbb{E}}
\newtheorem{theorem}{Theorem}
\newtheorem{definition}[theorem]{Definition}
\newtheorem{corollary}[theorem]{Corollary}
\newtheorem{proposition}[theorem]{Proposition}
\newtheorem{lemma}[theorem]{Lemma}
\newcommand{\horrule}[1]{\rule{\linewidth}{#1}} 	
\title{
		\vspace{-1in} 	
		\usefont{OT1}{bch}{b}{n}
		\horrule{0.5pt} \\[0.4cm]
		\Large A theory of shape regularity for local regression maps
		\\
		\horrule{2pt} \\[0.1cm]
}
\date{\large \today}
\author{ \textbf{Jérémy Bettinger, François Portier, Adrien Saumard} \\
\large \href{mailto:jeremy.bettinger@ensai.fr}{jeremy.bettinger@ensai.fr} ; \href{mailto:francois.portier@ensai.fr}{francois.portier@ensai.fr} ; \href{mailto:adrien.saumard@ensai.fr}{adrien.saumard@ensai.fr} \\
\large Department of Statistics, \\
\large University of Rennes, ENSAI, CNRS, CREST-UMR 9194, F-35000 Rennes, France }
\begin{document}

\maketitle

%

%

\begin{abstract}
 We introduce the concept of shape-regular regression maps as a framework to derive optimal rates of convergence for various non-parametric local regression estimators. Using Vapnik-Chervonenkis theory, we establish upper and lower bounds on the pointwise and the sup-norm estimation error,  even when the localization procedure depends on the full data sample, and under mild conditions on the regression model. Our results demonstrate that the shape regularity of regression maps is not only sufficient but also necessary to achieve an optimal rate of convergence for Lipschitz regression functions. To illustrate the theory, we establish new concentration bounds for many popular local regression methods such as nearest neighbors algorithm, CART-like regression trees and several purely random trees including  Mondrian trees.
\end{abstract}

\section{Introduction}\label{s1}
Consider the standard regression problem where the goal is to estimate the regression function of a random variable $Y\in \mathbb R$ given the covariates vector $X\in \mathbb R^d$, defined as $ g(x) := \mathbb E [ Y| X= x]$, $x\in \mathbb R ^d$. One leading approach, called \textit{local regression} or \textit{local averaging}, consists in averaging the observed response variables, restricted to covariates that lie in a small region of the domain $\mathbb R ^d$. Local regression methods include kernel smoothing regression \cite{nadaraya1964estimating}, nearest neighbors algorithm \cite{fix1989discriminatory,cover1968estimation} and regression trees or more generally partitioning regression estimator \cite{breiman1984classification,nobel}. We refer to the books
\cite{devroye96probabilistic,gyorfi2006distribution} for a complete view on local regression methods and to \cite{biau2015lectures} for a precise account on the theory of nearest neighbors algorithm.

Concerning the estimation problem, when the error is measured in terms of the mean squared error ($L_2$-error), optimal rates of convergence are known \cite{stone1982optimal} and they depend on the smoothness of the regression function $g$. Achieving or not these convergence rates often serves as a theoretical baseline to evaluate the accuracy of local regression methods. For instance, a Lipschitz function $g$ can only be approximated at the rate $ n ^{- 1/ (d+2)  }$ in general, when $n $ independent  observations are given. Many of the above estimators are known to achieve optimal convergence rates. The nearest neighbors, the Nadaraya Watson and the fixed partitioning (histogram) regression estimators are all optimal for Lipschitz functions (as well as for twice differentiable functions for the first two listed methods), as explained in \cite{biau2015lectures},  \cite{tsy_08} and Chapter 4 of \cite{gyorfi2006distribution}, respectively. Furthermore, the Nadaraya-Watson \cite{einmahl2000,gine+g:02} and the nearest neighbors \cite{jiang2019non,portier2021nearest} estimators are both known to achieve a rate of convergence in sup-norm that is of the same order as the $L_2$-rate (up to a logartimic term). 

Other local regression estimators that are based on purely random trees are of interest \cite{biau2016random}, despite the independence of the leaves with respect to the original data, because of their ability to explain certain patterns in the success or failure of different tree constructions and to illustrate the success of random forest regression. Among purely random trees, Mondrian trees, as introduced in \cite{lakshminarayanan2014mondrian}, indeed achieve the optimal rate of convergence \cite{mondrian} for Lischitz functions while, in contrast, centered trees fail to reach it \cite{biau2012analysis,klusowski2021sharp}. We also highlight that partitioning the space with Voronoï cells allows to attain the optimal rate of convergence \citep{gyorfi2021universal} (see also \cite{hanneke2021universal} and the reference therein for more results).

Despite the many existing results available for the Nadaraya-Watson and the nearest neighbors regression estimators, and also fixed or purely random  partitioning regression rules, only few is known about local regression based on data-dependent partitions such as the well-known CART regression tree \cite{breiman1984classification}. Such an algorithm is indeed much harder to precisely analyze mathematically.
First results on data dependent partitions can be found in \cite{stone1977consistent}, but they are restricted to cases where the partition depends only on the covariates, as in nearest neighbors regression or for statistically equivalent blocks \cite{anderson}. 
More advanced results, that are valid for general data dependent partitioning estimators, are obtained in \cite{gordon1980consistent,breiman1984classification,nobel},  where conditions are given to ensure almost sure $L_2$-consistency. The typical assumptions that are required in the previous works include (i) large enough points in each partition element and (ii) small diameter, while having (iii) a reduced complexity on the partition elements. Note also that Theorem 1 in \cite{scornet2015consistency} can be applied to CART regression algorithm and gives sufficient conditions for the $L_2$-consistency.  

Beyond consistency, few is known about the convergence rates of data-dependent, CART-like regression tree estimators. Recent studies  \cite{chi2022asymptotic, mazumder2024convergence} have obtained convergence rates for the $L_2$-error under the so-called \textit{sufficient impurity decrease} (SID) condition, that is directly linked to the behavior of the precise splitting rule of CART in the regression context. The rate of convergence obtained depends on a parameter - denoted $\lambda$ in \cite{ mazumder2024convergence} - quantifying the strength of the SID condition, and it is not \textit{a priori} easy to discuss the rate optimality. Nonetheless, it is shown in  \cite{ mazumder2024convergence} that for a univariate linear regression function, the rate obtained through the SID condition is actually optimal. A specific class of additive regression functions achieving a particular smoothness assumption called the "locally reverse Poincaré inequality" is provided in  \cite{ mazumder2024convergence}, satisfying the SID condition. In another direction, the recent negative results in \cite{cattaneo2022pointwise} show that CART regression can be sub-optimal, and even inconsistent, for the pointwise - and also uniform - estimation error. Such phenomenon does not occur when focusing on the $L_2$-error, but as highlighted in \cite{cattaneo2022pointwise}, pointwise convergence of decision trees is also essential for reliability of the methodologies developed in some causal inference and multi-step semi-parametric settings for instance.

In this work, we develop a theory for obtaining pointwise and uniform rates of convergence for a large class of local regression estimators, that includes previously mentioned partitioning estimators. More precisely, in a random design regression with heteroscedastic sub-Gaussian noise framework, the theory allows the localization method to be general as it may depend on a different source of randomness (as for the purely random tree) or on the covariates sample (as for nearest neighbors) and even on the full regression sample (as in CART). Instead of studying the integrated $L_2$-error, our approach deals with the pointwise and uniform estimation errors recently put forward in the literature \cite{cattaneo2022pointwise}, for which we obtain a sharp probability upper bound (Theorem \ref{th:general}). To prove such a result, we proceed with a decomposition of the pointwise estimation error into the sum of a variance term 
(scaling as the inverse of the square root of the number of covariates in the partition elements) 
and a bias term (scaling as the diameter of the partition elements). We point out that the major advantage of focusing on the pointwise error, compared to the $L_2$-error, is that it allows the control of the variance and bias terms though the use of the Vapnik dimension of a class containing the \textit{elements} of the random partition, instead of having to control the combinatorial size of the class of the \textit{entire} partitions themselves as in \cite{lugosinobel}.

Next, we introduce the notion of \textit{shape regularity} by imposing a simple relationship between the Lebesgue volume and the diameter of the localizing set. The major interest of this simple geometric property is that it turns out to be necessary (Proposition \ref{contre_ex}) and sufficient (Theorem \ref{main_result_localizing_map}) for obtaining optimal rates in pointwise and uniform estimation error. We then discuss how several tree constructions, including purely random trees such as uniform and Mondrian trees, satisfy - or not - the shape regularity condition, allowing to obtain  - or not - optimal rates of convergence. In addition,  the shape regularity allows to recover and slightly extend some results pertaining to the nearest neighbors literature \cite{jiang2019non,portier2021nearest}. 

Finally, we obtain a deviation inequality on the uniform estimation error of CART-like regression trees, grown by ensuring a minimal number of covariates in the tree leaves and by following a simple rule which maintains the shape regularity of the resulting localizing sets. In the case of partitions made of hyper-rectangles, such as for CART-like algorithms, the shape-regularity condition reduces to a control of the largest side length of the localizing set by its smallest side length. Recent results obtained in \cite{cattaneo2022pointwise} indeed tend to indicate that such rules addition is unavoidable to ensure good pointwise convergence rates for CART regression. 

It is worth noting that our approach substantially differs from the use of the SID condition described earlier. The latter indeed ensures convergence rates for the $L_2$-error and is highly linked to the precise cost in the splitting rule of CART, defined through the so-called impurity gain. Moreover, the SID condition is expressed through the behavior of the unknown regression function and covariates distribution, and cannot hold for any regression function. In contrast, our shape-regularity condition does not depend on the regression function $g$, neither on the covariates distribution, and only imposes restriction that may be effective with any cost function involved in the splitting rule. This makes our shape regularity condition easy to guarantee in practice as illustrated in Algorithm 1 (see Section \ref{s51}), where a general cost function is used to build the tree.

The outline is as follows. We state in Section \ref{s2} some necessary background and formulate the setting of local regression map estimators. Section \ref{s30} then gives a first deviation inequality for local regression map estimators. We introduce in Section \ref{s3} the shape regularity conditions and their properties. Section \ref{s5} is dedicated to pointwise and uniform convergence bounds for  data-dependent regression maps, namely nearest neighbors and CART-like trees. Finally, Section \ref{sec_PRT}  includes new positive and negative results about some classical purely random trees. All the mathematical proofs are given in the Appendix.

\section{Mathematical background}\label{s2}

\subsection{Regression set-up}\label{s21}

Let $(X,Y)$ be a random vector with probability distribution $\mathbb P$ on $\mathbb R^d \times \mathbb R$, where $d\geq 1$ is the dimension of covariates vector $X$. Consider the standard regression framework where the random variable $X\in S_X \subset\mathbb R^d$ is called the covariates and $Y \in \mathbb R$ is the output variable.  We aim at estimating the conditional expectation $x\mapsto g(x) = \mathbb E [ Y|X= x]$, $x\in S_X$. The quality of the estimation of the function $g$ by an estimator $\hat g$ will be assessed with the help of the uniform norm defined as $\sup_{x\in S_X} | \hat g(x) - g(x) | $. 
For a fixed $x\in S_X$, we also address the estimation error of the value $g(x)$ through the analysis of the deviations of the quantity $| \hat g(x) - g(x) | $. 

The following assumption on $\mathbb P$ will be key in this work and, roughly speaking, amounts to assume that the noise $\epsilon = Y  - g(X)$ in the regression model is light tailed.

\begin{enumerate}[label=(E), wide=0.5em,  leftmargin=*]
\item \label{cond:epsilon} 
The random variable $\epsilon$ is sub-Gaussian conditionally on $X$ with parameter $\sigma^2$. That is, $\mathbb E [\epsilon |X ]= 0$ and for all $\lambda \in \mathbb R$, $$ \mathbb E [ \exp( \lambda\epsilon  ) |X ] \leq \exp\left( \frac{ \lambda ^2}{   2\sigma^2 }\right).$$
\end{enumerate}
Note that under assumption \ref{cond:epsilon}, the noise term $\epsilon$ is squared integrable and it is allowed to depend on the covariates $X$.  In particular, the noise is \textit{heteroscedastic}, with a uniform upper bound on its conditional variance: $\mathbb{E}[\epsilon^2\vert X] \leq \sigma^2$ $a.s.$. A more restrictive assumption is when $\epsilon$ is independent from $X $ and sub-Gaussian with parameter $\sigma^2$.

Associated to the regression model, we consider a sample $\mathcal{D}_n=\left\{(X_i,Y_i) \,  : \, i=1,\ldots,n\right\}$ of independent and identically distributed pairs of random variables with distribution $\mathbb  P^{\otimes n}$. Let also $\varepsilon_ i = Y_i - g(X_i) $ for each $i=1,\ldots, n$.

\subsection{Local regression maps}\label{s22}

We consider general local regression estimators using the concept of local maps so as to include regression trees and partitioning estimators but also the nearest neighbors regression rule. Let $\mathcal B(S_X)$ denote the Borel $\sigma$-algebra on $S_X$.

\begin{definition}
    A local map for a variable $X$ is a mapping $ \mathcal V : S_X \to \mathcal B(S_X) $ such that for all $x\in S_X$, $x\in \mathcal V (x)$. 
\end{definition}

We emphasize here that this work deals with continuous covariates and therefore all local maps will have their images to sets with positive Lebesgue measure. Also we stress out that similar maps were introduced in \cite{nobel}, where they are however restricted to partition based estimator.
For any local map $\mathcal V$, the associated regression estimator is given by
$$\forall x \in S_X, \quad \hat g_{\mathcal V}(x) = \frac{\sum_{i=1} ^ n Y_i\mathds 1 _{ \mathcal V(x) }(X_i) }{\sum_{i=1} ^ n \mathds 1 _{ \mathcal V(x) }(X_i)  },$$ with the convention $0/0 = 0.$
Local maps  $\mathcal V $ depending on the sample $(X_1,Y_1),\ldots , (X_n, Y_n)$ are of particular interest. This is indeed the case for certain adaptive tree constructions as well as for nearest neighbors regression.  

The local regression map framework is particularly interesting because it includes a variety of different methods, e.g., fixed partitioning,  purely random trees, nearest neighbors, and CART-like constructions, and each method induces a particular dependence structure when creating the partition.

\paragraph{Example 1 (fixed hyper-rectangles partition).} The most simple case for the dependence structure of the local map is when the partition is fixed, not random.  Suppose $S_X= (0,1]^d$. For each coordinate $k =1,\ldots, d$, consider the collection 
$ 0=  u_0^{(k)} < u_1^{(k)}< \ldots < u_{N_k}^{(k)}  =   1 $. 
This allows to introduce a partition of $S_X$ made of $ \prod_{k=1} ^d N_k $ elements defined as $ V_{i_1,\ldots, i_d} =  \prod_{ k = 1 }^d (u_{i_k}^{(k)}, u_{i_k + 1 }^{(k)} ]$ for each d-uplet $(i_1,\ldots, i_d)$ satisfying $i_\ell\in \left\{0,\ldots,N_\ell-1 \right\}$ for $\ell\in \left\{1,\ldots,d \right\}$. Note that each $V_{i_1,\ldots, i_d}$ has a positive Lebesgue measure $ \prod_{ k = 1 }^d (u_{i_k + 1 }^{(k)} - u_{i_k}^{(k)} )$.

\paragraph{Example 2 (purely random trees).}
In contrast with Example 1, \textit{purely random tree} construction, as described in \cite{arlot2014analysis} and initially introduced in \cite{breiman2000some},  follows from using some randomness that is independent from the observed sample. It includes centered (resp. uniform) trees, for which the split direction is uniformly distributed along the space coordinates and the split location of the selected side is at the center (resp. uniformly distributed).  It also includes Mondrian trees \citep{lakshminarayanan2014mondrian} where the split direction is selected at random depending on the shape - i.e. side lengths - of the leaf.  

\paragraph{Example 3 (nearest neighbors regression).} Nearest neighbors algorithm induces a Voronoï-like partition which dependence structure is different from the one of purely random trees, since the resulting partition depends here on the data through the location of the covariates in the space. The $k$-nearest neighbors ($k$-NN) estimator (see \cite{biau2015lectures} for a recent textbook) is defined, for each $x\in S_X$, as the average responses among the $k$-nearest neighbors to point $x$. As such, we have
$$ \hat g_{NN}(x) =  \frac{1}{k }  \sum_{i=1}^n  Y_i\ind_{B(x, \hat \tau_k(x))} (X_i ) ,$$
where $\hat \tau_k(x)$ is the so-called $k$-NN radius defined as the smallest radius $\tau>0 $ such that $\sum_{i=1}^n  \ind_{B(x,  \tau ) } (X_i ) \geq k$.
Note that here the local map is $\mathcal V (x)  = B(x, \hat \tau_k(x))$ and therefore  depends on $X_1,\ldots, X_n$.

\paragraph{Example 4 (CART-like trees).} 
Regression trees are a class of partition based estimators where the partition is recursively built, and made of hyper-rectangles. Therefore, they are part of the local map framework, just as example 1 and 2 above. Usual regression trees are grown sequentially by splitting stage-wise each (adult) leaf into two (children) leafs. In most cases, as in CART regression \citep{breiman1984classification}, each cell division results from splitting along one single variable according to a data-based criterion. This precise step is crucial as it allows to adapt the partition to the prediction problem. For instance, if one variable is not significant then it must be better not to split with respect to it. This enables to obtain a flexible regression estimator which behaves well in many problems even when the dimension $d$ is rather large. The fact that the resulting partition depends on the full data (including the response) is however problematic for the theory since in this case, the local averaging estimator is not a sum over independent random variables, thus prohibiting a direct application of concentration inequalities for sums of independent observations. Finally, it is worth mentioning that CART regression trees are the ones that are usually combined in the standard random forest regression algorithm as introduced in \cite{breiman2001random}.
 

\section{A deviation bound for local map estimators}\label{s30}

The section is divided into two parts. We first give some preliminary concentration bounds that are free from any restriction on the probability distribution of the covariates. Then we use them in the regression framework in order to have some concentration bounds on the estimation error.

\subsection{A deviation bound for the variance term}\label{3.1}

The \textit{shattering coefficient}, as introduced in Vapnik's seminal work \cite{vapnik2015uniform} and  detailed for instance in \cite{wellner1996},
is key to obtain upper bounds on certain empirical sums indexed by sets or functions. Let $\mathcal A$ be a collection of subsets of a set $ S$. Given an arbitrary collection $z =( z_1,\ldots, z_n)$ of distinct points in $  S$, consider the collection of $\mathbb R^n$-points
$ \ind _ {\mathcal A} (z)$ defined as $  \{ (\ind _ A (z_1) \ldots, \ind_A (z_n )) : A\in \mathcal A \}\subset \{0,1\}^n $. 
We have that $|\ind _ {\mathcal A} (z)  | \leq 2^n$ and when $   |\ind _ {\mathcal A} (z) |  = 2^n$ we say that $z$ is shattered by $\mathcal F$. An important quantity is then
 $$\mathbb S_\mathcal A(n) : =  \sup_{z\in \mathbb R^n} | \ind _ {\mathcal A} (z) | $$
which is called the shattering coefficient. 

We now provide a VC-type inequality tailored to local regression maps. This will be key to the analysis of the variance term of local map estimators.

\begin{theorem}\label{1}
Let $n\geq 1$ and $\delta \in (0,1) $. Suppose that \ref{cond:epsilon} is fulfilled and that $\{\mathcal V (x) \, :\, x\in \mathbb R^d\} \subset \mathcal A$, a deterministic collection of sets in $\mathbb R^d$. The following inequality holds with probability at least $1 - \delta$,
$$\sup_{x\in \mathbb R^d} \dfrac{\sum_{i=1}^n \varepsilon_i \mathds 1 _{ \mathcal V(x) }(X_i)}{\sqrt{\sum_{j=1}^n \mathds 1 _{ \mathcal V(x) }(X_j)}} \leq \sqrt{2 \sigma^2 \log\left( \frac{\mathbb S_ {  \mathcal A   } (n)}{\delta} \right)}.$$
\end{theorem}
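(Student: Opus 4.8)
The plan is to condition on the covariates $X_1,\dots,X_n$ and to reduce the supremum over $x\in\mathbb R^d$ to a maximum over a finite, conditionally deterministic family of $\{0,1\}^n$-vectors, so that the shattering coefficient controls the size of the union bound. The point that must be handled carefully is that the local map $\mathcal V$ is allowed to depend on the whole sample $\mathcal D_n$ — responses included — so one cannot simply condition on $\mathcal V$ and view the numerator as a sum of conditionally independent terms indexed by a \emph{fixed} set. The resolution is to work with the deterministic envelope class $\mathcal A$ rather than the (data-dependent) image of $\mathcal V$: for a realization $X_1=x_1,\dots,X_n=x_n$ of pairwise distinct points (which holds almost surely since the covariate law is continuous), every vector $(\mathds 1_{\mathcal V(x)}(x_i))_{i=1}^n$, $x\in\mathbb R^d$, lies in $\ind_{\mathcal A}(x_1,\dots,x_n)$, which has cardinality at most $\mathbb S_{\mathcal A}(n)$ and depends only on $x_1,\dots,x_n$ (and on the fixed $\mathcal A$). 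Consequently, conditionally on $X_1,\dots,X_n$, the left-hand side of the claimed inequality is bounded by the maximum of $\big(\sum_i \varepsilon_i a_i\big)/\sqrt{\sum_j a_j}$ over the at most $\mathbb S_{\mathcal A}(n)$ nonzero vectors $a\in\ind_{\mathcal A}(X_1,\dots,X_n)$, the zero vector contributing $0$ by the convention $0/0=0$; in particular the supremum is measurable.

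Next I would control each of these finitely many ratios. Fix $x_1,\dots,x_n$ and a nonzero $a\in\{0,1\}^n$. Because the pairs $(X_i,Y_i)$ are i.i.d., the noises $\varepsilon_1,\dots,\varepsilon_n$ are independent conditionally on $X_1,\dots,X_n$, and by assumption \ref{cond:epsilon} each $\varepsilon_i$ is centered and sub-Gaussian with parameter $\sigma^2$ given $X_i$. Multiplying by the constants $a_i\in\{0,1\}$ and multiplying the conditional moment generating functions, $\sum_{i=1}^n \varepsilon_i a_i$ is conditionally sub-Gaussian with parameter $\sigma^2\sum_i a_i^2=\sigma^2\sum_i a_i$; dividing by $\sqrt{\sum_j a_j}$ produces a conditionally sub-Gaussian variable with parameter $\sigma^2$, whence for every $t>0$,
$$\mathbb P\!\left( \frac{\sum_{i} \varepsilon_i a_i}{\sqrt{\sum_j a_j}} > t \,\Big|\, X_1,\dots,X_n\right) \le \exp\!\left(-\frac{t^2}{2\sigma^2}\right).$$

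Finally I would take a union bound over the at most $\mathbb S_{\mathcal A}(n)$ admissible vectors $a$ — legitimate conditionally on $X_1,\dots,X_n$ since $\ind_{\mathcal A}(X_1,\dots,X_n)$ is then a fixed finite set — to obtain
$$\mathbb P\!\left( \sup_{x\in\mathbb R^d} \frac{\sum_i \varepsilon_i \mathds 1_{\mathcal V(x)}(X_i)}{\sqrt{\sum_j \mathds 1_{\mathcal V(x)}(X_j)}} > t \,\Big|\, X_1,\dots,X_n\right) \le \mathbb S_{\mathcal A}(n)\, \exp\!\left(-\frac{t^2}{2\sigma^2}\right).$$
Setting $t=\sqrt{2\sigma^2\log(\mathbb S_{\mathcal A}(n)/\delta)}$ makes the right-hand side equal to $\delta$; taking expectation over $X_1,\dots,X_n$ removes the conditioning and gives the stated bound.

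I expect the only genuinely delicate step to be the first one. The rest is the routine sub-Gaussian maximal inequality: independence conditionally on the covariates, the additive behaviour of sub-Gaussian parameters under $\{0,1\}$-weighting, and a union bound optimized in $t$. What makes the statement non-trivial — and what the envelope class $\mathcal A$ together with its shattering coefficient is designed to handle — is precisely that $\mathcal V$ may be a function of the entire data, so that the estimator is \emph{not} a sum of conditionally independent variables over a prescribed index set; bounding over all of $\mathcal A$ decouples the randomness of $\mathcal V$ from the concentration argument at the cost of only a $\log \mathbb S_{\mathcal A}(n)$ factor.
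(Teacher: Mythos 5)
Your proof is correct and follows essentially the same route as the paper's: condition on $X_1,\dots,X_n$, bound the supremum by a maximum over the at most $\mathbb S_{\mathcal A}(n)$ binary vectors induced by the envelope class $\mathcal A$, apply the conditional sub-Gaussian bound (using $a_i^2=a_i$), take a union bound, optimize $t$, and integrate out the covariates. Your additional remarks on measurability, the $0/0=0$ convention, and why the full-data dependence of $\mathcal V$ is harmless are welcome but do not change the argument.
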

Note that in Theorem \ref{1} above, only an upper bound is given but a lower bound is also valid, since the same holds true when each $\varepsilon _ i $ are replaced by $ -\varepsilon_i$. Moreover, combining such inequalities through a union bound gives a result for the supremum of the absolute value.

\subsection{A pointwise error bound}\label{s23}

We now state a general deviation bound on the uniform error of local regression map estimators with finite Vapnik-Chervonenkis (VC) dimension. The VC dimension is defined as 
\begin{align*}
  vc(\mathcal A)  &= \max \{ n\geq 1 \, :\, \mathbb S_\mathcal A(n) 
 = 2^n  \}. 
\end{align*}
As a consequence, the fact that all given $z_1,\cdots , z_{v+1} $ points cannot be shattered is equivalent to the fact that the VC dimension is smaller than $v$. The reason why the VC dimension is appropriate to control the complexity of classes of sets is perhaps explained by the Sauer's lemma (see \cite{lugosi2002pattern} for a proof) which states that
$\mathbb{S}_{\mathcal{A}}(n)  \leq  \sum_{i=0} ^{vc(\mathcal{A})} \binom{n}{i}.$ 
  An interesting consequence of Sauer's lemma is that 
$\mathbb S_\mathcal A(n) \leq (n+1)^{vc(\mathcal A) }.$

As established in \cite{wenocur1981some}, previous examples include the class of cells $(-\infty , t]\subset \mathbb R^d$, having VC dimension equal to $d$, or the class $( s , t]$, $s,t\in \RR^d$, of VC dimension equal to $2d$. In addition, the class of balls in $\mathbb R^d$ has dimension equal to $d+1$. 
\begin{definition}
 A local map $\mathcal V$ is said to be VC whenever $\{\mathcal V (x) \, :\, x\in S_X\} \subset \mathcal A$, a fixed VC collection of sets in $\mathbb R^d$.
\end{definition}
Let us further define some quantities that will be instrumental in our analysis. For any set $V$, its diameter is given by the formula
$$ \diam (V)  = \sup_{(x,y)\in V\times V} \|x-y\|_2,$$
where $\|x\|_2^2 = \sum_{k=1} ^d x_k^2$. A real function $g$ on $S_X$ is called $L$-Lipschitz as soon as $ | g(x) - g(y) |\leq L \|x-y\|_2$ for all $(x,y)\in S_X^2$.
 Define also the local Lipschitz constant $ L(V)$ of $g$ on $V\subset S_X$ as the smallest constant $L>0$ such that, for all $(x ,y)$ in $V^2$,
$$     |g(x) - g(y) | \leq L  \|  x-y\|_2.$$
For a $L$-Lipschitz function, it holds $L(V)\leq L$ for any set $V\subset S_X$.

The next probability error bound is valid for local map estimators, with a general VC local map, that may for instance depend on the sample.

\begin{theorem}\label{th:general}
Let $n\geq 1$ and $\delta \in (0,1/2) $. Under \ref{cond:epsilon}, suppose that $g$ is Lipschitz on $S_X$ and that the local map is VC with dimension $v$. We have, with probability at least $1 - 2 \delta $, for all $x\in S_X$,
        \begin{align*}
       |  \hat g_{\mathcal V}(x) - g(x)| \leq \sqrt{\frac{ 2 \sigma^2 \log\left( \frac{ (n+1) ^v }{\delta} \right)}{ n \PP_n(\mathcal V (x))  } } + L(\mathcal V(x)  ) \diam  (\mathcal V(x) )  .
    \end{align*}
\end{theorem}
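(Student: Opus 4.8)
The plan is to split the pointwise error into a deterministic \emph{bias} term and a stochastic \emph{variance} term, bound each separately, and recombine by the triangle inequality. Writing $Y_i = g(X_i) + \varepsilon_i$, on the event $\{\PP_n(\mathcal V(x)) > 0\}$ one has
$$\hat g_{\mathcal V}(x) - g(x) = \frac{\sum_{i=1}^n \bigl(g(X_i) - g(x)\bigr)\mathds 1_{\mathcal V(x)}(X_i)}{\sum_{i=1}^n \mathds 1_{\mathcal V(x)}(X_i)} + \frac{\sum_{i=1}^n \varepsilon_i\, \mathds 1_{\mathcal V(x)}(X_i)}{\sum_{i=1}^n \mathds 1_{\mathcal V(x)}(X_i)}.$$
On the complementary event $\{\PP_n(\mathcal V(x)) = 0\}$ the asserted right-hand side equals $+\infty$, so there is nothing to prove; this disposes of the $0/0$ convention.

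For the bias term, recall that $x \in \mathcal V(x)$ by definition of a local map and that every $X_i$ contributing to the numerator lies in $\mathcal V(x)$; hence $|g(X_i) - g(x)| \le L(\mathcal V(x))\, \|X_i - x\|_2 \le L(\mathcal V(x))\, \diam(\mathcal V(x))$ by the definitions of the local Lipschitz constant and of the diameter. Since the weights $\mathds 1_{\mathcal V(x)}(X_i) / \sum_j \mathds 1_{\mathcal V(x)}(X_j)$ are nonnegative and sum to one, the bias term is bounded in absolute value by $L(\mathcal V(x))\, \diam(\mathcal V(x))$, pointwise in $x$ and with probability one.

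For the variance term, I would factor it as $\bigl(\sum_j \mathds 1_{\mathcal V(x)}(X_j)\bigr)^{-1/2}$ multiplied by $\bigl(\sum_j \mathds 1_{\mathcal V(x)}(X_j)\bigr)^{-1/2}\sum_i \varepsilon_i \mathds 1_{\mathcal V(x)}(X_i)$, and apply Theorem \ref{1} — with $\mathcal A$ a VC class of dimension $v$ containing $\{\mathcal V(x) : x \in S_X\}$ — to both $(\varepsilon_i)$ and $(-\varepsilon_i)$. Combining the two one-sided bounds by a union bound costs a factor $2$ in the confidence level, which is meaningful since $\delta < 1/2$, and yields, with probability at least $1 - 2\delta$, the simultaneous estimate $\bigl|\sum_i \varepsilon_i \mathds 1_{\mathcal V(x)}(X_i)\bigr| \le \sqrt{2\sigma^2 \log(\mathbb S_{\mathcal A}(n)/\delta)}\,\bigl(\sum_j \mathds 1_{\mathcal V(x)}(X_j)\bigr)^{1/2}$ for all $x \in S_X$ at once. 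Hence the variance term is at most $\sqrt{\frac{2\sigma^2 \log(\mathbb S_{\mathcal A}(n)/\delta)}{n\,\PP_n(\mathcal V(x))}}$, and replacing $\mathbb S_{\mathcal A}(n)$ by the Sauer bound $(n+1)^{vc(\mathcal A)} = (n+1)^v$ and adding the two estimates gives the claim.

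The decomposition, the Lipschitz estimate, and the Sauer bound are all routine, and the entire probabilistic content sits in Theorem \ref{1}, which I would take as given. The one point that genuinely requires care, and the place I expect any difficulty, is that $\mathcal V$ may depend on the whole sample $\mathcal D_n$: then $\mathcal V(x)$ is a random set and $\sum_i \varepsilon_i \mathds 1_{\mathcal V(x)}(X_i)$ is \emph{not} a sum of independent summands, so a direct concentration inequality is unavailable. This is exactly why the argument must be routed through the uniform control over the fixed class $\mathcal A$ supplied by Theorem \ref{1}, the supremum over $x$ being dominated by a supremum over $\mathcal A$ regardless of how $\mathcal V$ uses the data.
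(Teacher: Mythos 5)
Your proposal is correct and follows essentially the same route as the paper: the same bias--variance decomposition, the same pointwise Lipschitz/diameter bound on the bias, and the same two-sided application of Theorem \ref{1} (union bound over $\pm\varepsilon_i$, hence the $1-2\delta$) combined with Sauer's bound $\mathbb S_{\mathcal A}(n)\leq (n+1)^v$ for the variance term. Your explicit handling of the $0/0$ convention and your remark on why data-dependence of $\mathcal V$ forces the detour through the fixed class $\mathcal A$ are sound and, if anything, slightly more careful than the paper's write-up.
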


An alternative approach proposed in \cite{lugosinobel,nobel} as well as in \cite{devroye96probabilistic}, see Theorem 21.2 therein, follows from a uniform control on all resulting partitions, implying consistency results for sums over all partition elements. In Theorem \ref{th:general}, our approach is substantially different, since by considering the pointwise or sup-norm error,  the complexity term comes from the elements of the partition only. In addition, Theorem \ref{th:general} above might be compared with Theorem 6.1 in \cite{devroye96probabilistic}, which is suitable to either non-random or purely random (i.e., independent from the sample) data partitioning (\cite{JMLR:v9:biau08a}). While Theorem \ref{th:general} is valid for data dependent partitions, we recover almost sure consistency by imposing two conditions that are similar to those required in Theorem 6.1 of \cite{devroye96probabilistic}, namely $\diam  (\mathcal V(x) )  \to 0 $ and $ n\PP_n( \mathcal V(x) ) / \log(n )  \to 0$. Depending on whether the previous conditions hold uniformly in $x$ or for a given $x$, the consistency, uniform or pointwise, of the local map regression estimator can thus be obtained.

\section{Shape regularity}\label{s3}

We describe here the minimal mass assumption, which deals with the distribution of the covariates $\PP$. We then introduce the concept of shape regularity for local maps.

\subsection{Minimal mass assumption}\label{4.1}

The next minimal mass assumption allows to obtain an estimate for $\PP_n(\mathcal V(x)) $, which appears in the upper bound stated in Theorem \ref{th:general}.

\begin{enumerate}[label=(X), wide=0.5em,  leftmargin=*]
\item  \label{cond:density_X} For a local map $\mathcal V$ on $S_X$, there exist a constant $\kappa >0 $ and a density function $f_X$ such that, almost surely, for all $x\in S_X$, 
$$ \PP(\mathcal V(x) )  \geq  \kappa f_X(x) \lambda (\mathcal V(x)  )  ,$$
where $\lambda$ stands for Lebesgue measure on $\mathbb R^d$.
\newcounter{nameOfYourChoice}
\setcounter{nameOfYourChoice}{\value{enumi}}
\end{enumerate}
The minimal mass assumption is quite general as it allows to include $k$-NN regression estimators as well as partitioning estimators on $[0,1]^d$. In the first case, $\mathcal V (x)$ is a (small enough) ball with positive (random) radius. As shown in \cite{jiang2019non}, such assumption is satisifed by bounded from below density on smooth sets $S_X$, as well as Gaussian or Laplace variables on $\mathbb R^d$ (\cite{gadat2016classification}). Some further details will be given in Section \ref{s61}. In the second example $\mathcal V (x) $ is an hyper-rectangle included in $S_X= [0,1]^d$ and therefore lower bounded densities on $S_X$ easily satisfy \ref{cond:density_X}.   

The following definition is now required to ensure that enough points are lying within each element of the local map. 

\begin{definition}
A VC local map  $x\mapsto  \mathcal V(x)$ with dimension $v>0$ is called $(\delta, n)$-large whenever, for all $x\in S_X$, almost surely,
    \begin{align*}
  n \max (\PP_n(\mathcal V(x))  ,     \PP(\mathcal V(x)) )  \geq 8 \log\left(\dfrac{4 (2n+1) ^v }{\delta} \right).
\end{align*}

\end{definition}

Note in particular that the latter inequality is easy to check in practice as it suffices to make sure that enough points are in each element of the local map.

\begin{theorem}
    \label{th2:general}
   Let $n\geq 1$ and $\delta \in (0,1/3) $.  Under \ref{cond:epsilon} and \ref{cond:density_X}, suppose that $g$ is $L$-Lipschitz on $S_X$, that the local map is VC with dimension $v$ and is $(\delta, n)$-large, then we have with probability at least $1-3\delta$, for all $x\in S_X$,
\begin{align*}
        | \hat g_{\mathcal V}(x) - g(x)| \leq  \sqrt{\frac{ 3 \sigma^2 \log\left( \frac{(n+1)^v}{\delta} \right) }{n  \kappa f_X(x) \lambda ( \mathcal V (x) )   }} + L(\mathcal V(x) ) \diam (\mathcal V(x) ).
    \end{align*}
\end{theorem}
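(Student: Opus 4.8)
The plan is to deduce Theorem~\ref{th2:general} from Theorem~\ref{th:general} by replacing the empirical mass $\PP_n(\mathcal V(x))$ appearing in the variance term with the population mass $\PP(\mathcal V(x))$, and then invoking the minimal mass assumption~\ref{cond:density_X}, which lower bounds $\PP(\mathcal V(x))$ by $\kappa f_X(x)\lambda(\mathcal V(x))$. The bridge from $\PP_n(\mathcal V(x))$ to $\PP(\mathcal V(x))$ will be a relative (multiplicative) Vapnik--Chervonenkis deviation inequality applied uniformly over the deterministic class $\mathcal A\supseteq\{\mathcal V(x):x\in S_X\}$; the role of the $(\delta,n)$-largeness hypothesis is to render the correction term in that inequality small enough that $\PP_n(\mathcal V(x))$ and $\PP(\mathcal V(x))$ are comparable up to the multiplicative constant needed to turn the factor $2$ of Theorem~\ref{th:general} into the factor $3$ of the statement.

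Concretely I would proceed as follows. First, Theorem~\ref{th:general} provides an event $\Omega_1$ with $\PP(\Omega_1)\ge 1-2\delta$ on which $|\hat g_{\mathcal V}(x)-g(x)|\le \sqrt{2\sigma^2\log((n+1)^v/\delta)/(n\PP_n(\mathcal V(x)))}+L(\mathcal V(x))\diam(\mathcal V(x))$ for every $x\in S_X$. Second, since $vc(\mathcal A)=v$, Sauer's lemma gives $\mathbb S_{\mathcal A}(2n)\le(2n+1)^v$, and a relative VC deviation inequality (obtained, like Theorem~\ref{1}, from symmetrization and the shattering bound) yields an event $\Omega_2$ with $\PP(\Omega_2)\ge 1-\delta$ on which, for every $A\in\mathcal A$ simultaneously, $\PP(A)-\PP_n(A)$ is at most a fixed multiple of $\sqrt{\PP(A)\log(4(2n+1)^v/\delta)/n}$ (or of $\sqrt{\PP_n(A)\log(4(2n+1)^v/\delta)/n}$, whichever matches the largeness threshold best). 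Writing $p=\PP(\mathcal V(x))$, $\hat p=\PP_n(\mathcal V(x))$ and $a=\log(4(2n+1)^v/\delta)$: on $\Omega_2$, either $\hat p\ge p$, or the $(\delta,n)$-largeness forces $np=n\max(\hat p,p)\ge 8a$, so that $a/n$ is a small fraction of $p$ and the relative deviation bound can be rearranged into $\hat p\ge\frac{2}{3}p$; in both cases $2/(n\hat p)\le 3/(np)$. Third, on $\Omega_2$ the minimal mass assumption~\ref{cond:density_X} gives $p\ge\kappa f_X(x)\lambda(\mathcal V(x))$, hence
\[
 \frac{2\sigma^2\log((n+1)^v/\delta)}{n\,\PP_n(\mathcal V(x))}\ \le\ \frac{3\sigma^2\log((n+1)^v/\delta)}{n\,\PP(\mathcal V(x))}\ \le\ \frac{3\sigma^2\log((n+1)^v/\delta)}{n\,\kappa f_X(x)\lambda(\mathcal V(x))}.
\]
Substituting this into the bound on $\Omega_1$ and noting $\PP(\Omega_1\cap\Omega_2)\ge 1-3\delta$ finishes the proof.

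The step I expect to be the main obstacle is the constant bookkeeping in the middle step: one must select a version of the relative VC deviation inequality whose numerical constants, combined with the factor $8$ in the definition of $(\delta,n)$-largeness and with the precise way $\mathbb S_{\mathcal A}(2n)\le(2n+1)^v$ enters the logarithm, produces exactly the ratio $\hat p\ge\frac{2}{3}p$ (hence the constant $3$) rather than a larger constant. A secondary point that must be handled with care is the data-dependence of $\mathcal V$: because $\mathcal V$ may be a function of the whole sample $\mathcal D_n$, the relative deviation inequality has to be used in its form that is uniform over the fixed class $\mathcal A$ --- so that it may be evaluated at the random sets $\mathcal V(x)$ --- and the largeness hypothesis in its stated almost-sure, for-all-$x$ form; with these in place, no independence between the localization and the data is required, exactly as in Theorem~\ref{th:general}.
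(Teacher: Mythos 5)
Your proposal is correct and takes essentially the same route as the paper: the paper also starts from Theorem~\ref{th:general}, uses the normalized (relative) Vapnik inequality of Theorem~\ref{th:vapnik_normalized} uniformly over the fixed VC class together with the case split on whether $\PP_n(\mathcal V(x))$ or $\PP(\mathcal V(x))$ is larger, invokes the $(\delta,n)$-largeness to get $\PP_n(\mathcal V(x))\ge \tfrac23\PP(\mathcal V(x))\ge\tfrac23\kappa f_X(x)\lambda(\mathcal V(x))$, and concludes by a union bound giving $1-3\delta$. The constant-bookkeeping issue you single out is indeed the only delicate spot (the paper's own justification of the $2/3$ factor from the threshold $8\log(4(2n+1)^v/\delta)$ rests on the claim $1-1/\sqrt2\ge 2/3$), but structurally your argument coincides with the paper's proof.
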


The previous result is different from the one of Theorem \ref{th:general} as the bound no longer depends on the number of points in the associated sets but instead on the volume. Together with the diameter, these two quantities will be appear in the definition of the $\gamma $-shape regularity, so as to minimize the latter upper bound and therefore, to attain optimal rates of convergence for the underlying regression problem.


\subsection{Shape-regular sets}\label{4.2}

A key concept, which will help us to characterize the rates of convergence of the local map regression estimators, is now introduced. As established in Theorem \ref{th2:general}, under the minimal mass assumption, the quantity $ |\hat g_{\mathcal V}(x) - g (x) |$ is bounded by
 $  \sqrt { 1 / ( n \lambda (\mathcal V (x)) )    }   +   \diam (\mathcal V (x))  $, up to constants and log terms. Theorem \ref{th2:general} allows to understand that a trade-of between the volume and the diameter must be achieved in order to reach optimal rates.
In this regard, first note that the volume cannot be larger than the diameter to the power $d$, as  $ \lambda (\mathcal V(x) )  \leq \diam ( \mathcal V(x) )^d $. 
Then optimizing the previous bound with respect  to $ \lambda (\mathcal V(x) )$ and $\diam ( \mathcal V(x) )$  under constraint that $\lambda (\mathcal V(x) )  \leq \diam ( \mathcal V(x) )^d $  leads to $\lambda (\mathcal V(x) ) = \diam ( \mathcal V(x) )^d = n^{-d/(d+2)}$ which leads to optimal rate. On the contrary, if $ \diam ( \mathcal V(x) )^d = \gamma_n \lambda (\mathcal V(x) )   $ with $\gamma_n \to \infty$, then the rate of convergence is suboptimal. This reasoning motivates the introduction of the following notion of shape-regularity.

\begin{definition}\label{def:gamma_regular}
For $\gamma>0 $, a set $ V $ is called $\gamma$-shape-regular ($\gamma$-SR) if
$\diam(V)^d \leq \gamma \lambda (V)$.
\end{definition}
\noindent The previous condition can be interpreted as a volume condition: the volume of $V$ should be of the same order as the volume of the smallest ball containing $V$. Roughly speaking, the shape of $V$ is not that different from that of a ball. Moreover, it does not depend on the covariates density, making it easy to check in practice. 
 
We provide now an alternative to Definition \ref{def:gamma_regular}, specifically designed for local maps valued in the set of hyper-rectangles. 
For any hyper-rectangle $A\subset  S_X$, let $h_-(A)$ and $h_+(A)$ denote the smallest and largest side length, respectively.
\begin{definition}\label{def:beta_regular}
For $\beta>0 $, a hyper-rectangle $ A$ is called $\beta$-shape-regular ($\beta$-SR) if 
$ h_+ (A)  \leq \beta h_-(A)$.
\end{definition}
\noindent It is easily seen that when a set $V$ is an hyper-rectangle, the $\gamma$-SR property is related to $\beta$-SR. This is the subject of the following proposition. 
\begin{proposition}\label{link_beta_gamma}
     A $\beta$-SR hyper-rectangle  is $\gamma$-SR with $\gamma = \beta^d \, d^{d/2}$. Conversely, a $\gamma$-SR hyper-rectangle  is $\beta$-SR with $\beta= \gamma$.
\end{proposition}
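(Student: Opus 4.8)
The plan is to prove the two implications separately, in each case expressing $\diam(A)$ and $\lambda(A)$ in terms of the side lengths $h_1,\dots,h_d$ of the hyper-rectangle $A$, and then comparing with $h_-(A) = \min_k h_k$ and $h_+(A) = \max_k h_k$. The basic identities are that $\lambda(A) = \prod_{k=1}^d h_k$ and, since the diameter of a box is the length of its main diagonal, $\diam(A)^2 = \sum_{k=1}^d h_k^2$; from the latter one gets the elementary sandwich $h_+(A) \leq \diam(A) \leq \sqrt{d}\, h_+(A)$, which will be the workhorse of both directions.

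For the first implication, suppose $A$ is $\beta$-SR, i.e. $h_+(A) \leq \beta h_-(A)$. Then I would bound $\diam(A)^d \leq d^{d/2} h_+(A)^d$ using the sandwich above, and bound the volume from below by $\lambda(A) = \prod_k h_k \geq h_-(A)^d \geq \beta^{-d} h_+(A)^d$, so $h_+(A)^d \leq \beta^d \lambda(A)$. Combining the two yields $\diam(A)^d \leq d^{d/2}\beta^d \lambda(A)$, which is exactly $\gamma$-SR with $\gamma = \beta^d d^{d/2}$.

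For the converse, suppose $A$ is $\gamma$-SR, i.e. $\diam(A)^d \leq \gamma \lambda(A)$. Here I would bound $\diam(A)^d \geq h_+(A)^d$ from the sandwich, and bound the volume from above by $\lambda(A) = \prod_k h_k \leq h_+(A)^{d-1} h_-(A)$ (keeping one factor of $h_-$ and replacing the remaining $d-1$ factors by the largest side). Then $h_+(A)^d \leq \gamma h_+(A)^{d-1} h_-(A)$, and dividing through by $h_+(A)^{d-1} > 0$ gives $h_+(A) \leq \gamma h_-(A)$, i.e. $\beta$-SR with $\beta = \gamma$.

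There is no real obstacle here; the only mild subtlety is handling degenerate cases (a side length equal to zero, which would make $\lambda(A)=0$), but since the paper works with hyper-rectangles of positive Lebesgue measure this does not arise, and the divisions by $h_+(A)^{d-1}$ and the manipulations above are all legitimate. The argument is a short chain of the two volume estimates $h_-(A)^d \leq \lambda(A) \leq h_+(A)^{d-1} h_-(A)$ together with $h_+(A) \leq \diam(A) \leq \sqrt d\, h_+(A)$.
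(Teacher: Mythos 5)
Your proof is correct and follows essentially the same route as the paper: the first implication via $\diam(A)\le\sqrt d\,h_+(A)$ and $\lambda(A)\ge h_-(A)^d$, the converse via $\diam(A)\ge h_+(A)$ and $\lambda(A)\le h_+(A)^{d-1}h_-(A)$ (the paper states this last bound in the equivalent $d$-th-root form). No gaps.
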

\noindent The two definitions of shape regularity, $\gamma$ and $\beta$, are therefore equivalent in the case of hyper-rectangles. More precisely, the first implication from above will be of particular interest for us, as it will allow to show that some regression trees are $\gamma$-shape-regular. In practice, one way to obtain a $\beta$-SR (and therefore $\gamma$-SR) tree is to allow only for $\beta$-SR splits when growing the tree, i.e., valid splits in light of Definition \ref{def:beta_regular}. This is easily imposed as it only requires to restrict the  optimization domain when finding the optimal split. We further develop this aspect in Section \ref{s51} below.

Note that, in dimension $d=1$, trees are necessarily shape-regular for $\gamma=\beta=1$ as $ h_- = h_+$. From this perspective, dimension $1$ plays a special role and might exhibit convergence properties that would not generalize to larger dimensions.

Let us now formalize a bit more on the idea that a non-shape-regular set would lead to a suboptimal convergence rate, at least for some regression functions that are sufficiently varying. Consider indeed the function $g:x\mapsto \sum_{k=1} ^d x_k $ defined on $[0,1]^d$. Set $d\geq 1$ and assume that $X \sim \mathcal U [0,1]^d$. Since $g$ is Lipschitz - note that each partial derivative of $g$ is actually \textit{equal} to one pointwise - optimal rates are of order $ n^{ - 1/(d+2)}$. Consider estimating $g$ at $0$ using a rectangular cell such that $\diam(\mathcal V)^d/ \lambda (\mathcal V) \geq \bar {\gamma}$ where $\bar {\gamma} >0$. Next we show that, under standard conditions, the optimal rate cannot be achieved when $\bar \gamma $ grows with $n$. This is important as it means that the optimal rate cannot be attained except when $\bar{\gamma}$ is bounded, meaning that trees need to be shape-regular for being optimal.

\begin{proposition}\label{contre_ex}
Let $n\geq 1$ and $d\geq 1$. Suppose that $X \sim \mathcal U [0,1]^d$ and that \ref{cond:epsilon} is fulfilled with $ g(x) = \sum_{k=1} ^d x_k$. Consider a localizing map $\mathcal V$ such that $ \mathcal V(0) = \prod_k[0,h_k]$. Let $\bar{\gamma}$ be such that $\diam(\mathcal V)^d/ \lambda (\mathcal V) \geq \bar{\gamma}$. Whenever $2^{d+4} \log(2) \leq  n \prod_{k=1}^d h_k$, there exists a constant $C_d > 0$ depending only on $d$ such that $$   \EE [ ( \hat g_{\mathcal V} (0 ) -  g(0 ))^2 ]^{1/2}  \geq  C_d \left( \dfrac{\bar{\gamma} \sigma^2}{n} \right)^{1/(d+2)}. $$
\end{proposition}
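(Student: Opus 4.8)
The plan is to lower bound the mean squared error by the variance of the local averaging estimator at $0$, ignoring (or rather, exploiting) the bias. Write $N = \sum_{i=1}^n \ind_{\mathcal V(0)}(X_i)$ for the number of sample points in the cell and, conditionally on the event $N \geq 1$, decompose $\hat g_{\mathcal V}(0) = \frac{1}{N}\sum_{i : X_i \in \mathcal V(0)} g(X_i) + \frac{1}{N}\sum_{i : X_i \in \mathcal V(0)} \varepsilon_i$. Because $\mathcal V(0) = \prod_k [0,h_k]$ is a deterministic cell (the localizing map is prescribed at $0$), conditionally on $X_1,\dots,X_n$ the noise part has conditional variance $\frac{1}{N^2}\sum_{i : X_i \in \mathcal V(0)} \Var(\varepsilon_i \mid X_i)$. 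The subtlety is that assumption \ref{cond:epsilon} only gives an \emph{upper} bound $\sigma^2$ on the conditional variance, not a lower bound, so the pure noise variance could vanish. I would handle this by instead lower bounding the MSE through the bias/signal fluctuation: since $g(x) = \sum_k x_k$ is not constant on $\mathcal V(0)$ (unless the cell is degenerate), the term $\frac{1}{N}\sum g(X_i)$ itself fluctuates around $g(0) = 0$, and this fluctuation is driven only by the randomness of the $X_i$'s, which is fully specified.

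Concretely, I would compute $\EE\big[\hat g_{\mathcal V}(0) \,\big|\, N\geq 1\big]$ and note it equals $\EE[g(X_1) \mid X_1 \in \mathcal V(0)] = \frac12 \sum_k h_k$, which is strictly positive, and then lower bound $\EE[(\hat g_{\mathcal V}(0) - g(0))^2]$ by either the squared bias $\big(\tfrac12\sum_k h_k\big)^2$ on the event $\{N \geq 1\}$, or by the conditional variance of the signal term. The key quantitative step is to compare $\sum_k h_k$ (or $\max_k h_k$) with $\bar\gamma$: since $\diam(\mathcal V)^d = \big(\sum_k h_k^2\big)^{d/2} \geq \bar\gamma \lambda(\mathcal V) = \bar\gamma \prod_k h_k$, and $\big(\sum_k h_k^2\big)^{1/2} \leq \sqrt d \,\max_k h_k$, we get $\max_k h_k \geq (\bar\gamma/d^{d/2})^{1/d} \big(\prod_k h_k\big)^{1/d}$. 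Combined with the hypothesis $n \prod_k h_k \geq 2^{d+4}\log 2$, this forces the cell to be "large" in at least one direction relative to its volume, and since $\prod_k h_k$ cannot be too small (it is $\gtrsim 1/n$) this yields a lower bound on $\max_k h_k$, hence on the bias, of the right order. Optimizing the trade-off: the bias-type lower bound scales like $(\max_k h_k)^2 \gtrsim \bar\gamma^{2/d}(\prod_k h_k)^{2/d}$ while we also want the volume term; I expect the cleanest route is to observe that either $\prod_k h_k$ is of order $n^{-1}$-ish (then the noise-free variance term, computed below, is large) or $\max_k h_k$ is large (then the bias is large), and in the borderline case both equal $(\bar\gamma\sigma^2/n)^{2/(d+2)}$ up to a $d$-dependent constant.

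For the variance term that does \emph{not} require a lower bound on the noise: conditionally on $N = m \geq 1$, the signal part $\frac1m\sum_{j=1}^m g(U_j)$ with $U_j$ i.i.d.\ uniform on $\prod_k[0,h_k]$ has variance $\frac1m \Var(g(U_1)) = \frac{1}{12m}\sum_k h_k^2 = \frac{1}{12m}\diam(\mathcal V)^2 \geq \frac{\bar\gamma^{2/d}}{12m}(\prod_k h_k)^{2/d}$, using the shape-regularity inequality. Then $N$ is $\mathrm{Binomial}(n, \prod_k h_k)$, and the condition $n\prod_k h_k \geq 2^{d+4}\log 2$ lets me apply a Chernoff/binomial concentration bound to show $N \leq 2 n \prod_k h_k$ with probability at least, say, $1/2$ (this is where the explicit constant $2^{d+4}\log 2$ is consumed — it makes the deviation probability small enough). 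On that event $\frac{1}{12N} \geq \frac{1}{24 n \prod_k h_k}$, so the contribution to the MSE is at least a constant times $\frac{\bar\gamma^{2/d}(\prod_k h_k)^{2/d}}{n \prod_k h_k} = \bar\gamma^{2/d} \, \frac{(\prod_k h_k)^{2/d - 1}}{n}$. Writing $V := \prod_k h_k$, this is $\asymp \bar\gamma^{2/d} V^{-(d-2)/d}/n$ which is \emph{decreasing} in $V$ for $d>2$, whereas the squared bias $\big(\tfrac12\sum_k h_k\big)^2 \gtrsim \bar\gamma^{2/d} V^{2/d}$ is increasing in $V$; summing the two and minimizing over $V$ gives the common value $\asymp_d (\bar\gamma/n)^{2/(d+2)}$. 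Reinstating $\sigma^2$: I would absorb it by noting that without loss of generality one can also lower bound using the noise whenever the conditional variance happens to be bounded below — but more carefully, since the bound must feature $\sigma^2$, I would instead argue that if $\sigma^2$ is large the stated bound is weaker and follows from the $\sigma$-free bias/signal estimate up to adjusting $C_d$; formally one takes $C_d$ small enough that $C_d(\bar\gamma\sigma^2/n)^{1/(d+2)}$ is dominated by the $\sigma$-independent lower bound when $\sigma \leq \sigma_0(d)$ and handles $\sigma > \sigma_0$ by a standard two-point (Le Cam) argument in the noise.

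The main obstacle is exactly this last point — getting the factor $\sigma^2$ into the lower bound when assumption \ref{cond:epsilon} only provides an \emph{upper} bound on the conditional noise variance, so the "variance of $\hat g$" can be as small as the signal fluctuation alone. I expect the paper resolves it either by an additional (unstated in the excerpt) convention that the noise has variance exactly $\sigma^2$, or via a Le Cam two-point argument distinguishing $g$ from $g + \text{const}$ under $N(0,\sigma^2)$ noise, which naturally produces the $\sigma^2/(n V)$ scaling; I would pursue the two-point argument, combining it with the shape-regularity-driven lower bound $\diam(\mathcal V)^2 \geq \bar\gamma^{2/d} V^{2/d}$ for the bias side and the binomial concentration $N \leq 2nV$ (whp, thanks to $nV \geq 2^{d+4}\log 2$) for the variance side, then optimizing over $V$.
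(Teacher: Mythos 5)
Your treatment of the bias part is sound and in fact a little more direct than the paper's: you lower bound the error by the mean of the signal term, whereas the paper localizes to the sub-cell $V_1=\prod_k[h_k/2,h_k]$, uses the Chernoff bound of its Theorem~\ref{lemma=chernoff} (this is where $2^{d+4}\log 2\le n\prod_k h_k$ is consumed) to guarantee a constant fraction of points in $V_1$, and deduces $B\ge \tfrac{c}{2}\diam(\mathcal V(0))$ with probability $\ge 1/2$; both routes end with $\diam(\mathcal V(0))\ge \bar\gamma^{1/d}\lambda(\mathcal V(0))^{1/d}$. The genuine gap is in the variance part. The paper gets the floor $\EE[W^2]\ge \sigma^2/(n\lambda(\mathcal V(0)))$ directly from the noise, by computing $\EE[W^2\mid X_1,\dots,X_n]=\sigma^2/\sum_i\ind_{\mathcal V(0)}(X_i)$ (i.e.\ it reads the conditional noise variance as $\sigma^2$ — the subtlety you correctly flagged is resolved by this implicit convention, not by any minimax device) and then Jensen. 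It is precisely this term, whose numerator does \emph{not} shrink with the cell, that traded off against the bias term $\asymp\bar\gamma^{2/d}\lambda^{2/d}$ produces the exponent $2/(d+2)$.

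Your substitute — the fluctuation of the signal part, of size $\diam^2/(12N)\asymp \bar\gamma^{2/d}V^{2/d}/(nV)$ with $V=\prod_k h_k$ — is too small in the small-volume regime and the optimization you announce does not give the stated rate: equating $\bar\gamma^{2/d}V^{2/d}$ with $\bar\gamma^{2/d}V^{2/d-1}/n$ gives $V\asymp 1/n$ (which is admissible, since the hypothesis only requires $nV\ge 2^{d+4}\log 2$) and a minimum of order $\bar\gamma^{2/d}n^{-2/d}$, which is of strictly smaller order than the target $(\bar\gamma\sigma^2/n)^{2/(d+2)}$. So the $\sigma$-free estimate cannot establish the proposition even for a fixed small $\sigma$. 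The proposed repairs do not close this: for large $\sigma$ the claimed bound is \emph{stronger}, not weaker, so it cannot be absorbed by adjusting $C_d$ in a $\sigma$-free bound; and a Le Cam two-point argument lower-bounds the maximum risk over two alternative distributions, whereas here the model (fixed $g$, uniform design, given noise) and the estimator $\hat g_{\mathcal V}$ are both fixed, so no such reduction yields the claimed pointwise MSE bound. The needed ingredient is simply the noise-variance computation the paper uses: $\EE[W^2\mid X_1,\dots,X_n]=\sigma^2/N$ (with the convention that the conditional variance equals $\sigma^2$), plus Jensen, giving $\sigma^2/(n\lambda(\mathcal V(0)))$, after which your bias bound and the one-dimensional minimization over $\lambda(\mathcal V(0))$ finish the proof.
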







More generally, the latter result still holds $g(x) \geq \sum_{k=1}^d x_k - g(0)$ on $\mathcal V (0)$. An example of such function is for instance $g$ differentiable, $\nabla g (0)= (1,...,1)^T$ and $g$ convex. But many non-convex functions satisfy this condition of course. Note also that the previous result can be extended to covariates $X$ having a density uniformly bounded from above and from below. Note finally, that the lower bound provided in Proposition \ref{contre_ex} is also valid when for the $L_2(P^{\otimes n})$-norm of the sup-norm estimation error $\Vert g_{\mathcal V} - g\Vert_\infty \,$, since the latter quantity is bounded from below by the pointwise estimation error.

\subsection{Shape regularity of local maps}\label{4.3}

Let us now introduce the following definition which requires that all elements of the localizing map are $\gamma $-SR. 

\begin{definition}
A localization map  $x\mapsto  \mathcal V (x)$ is $\gamma$-SR  if all elements in $\{\mathcal V (x)\,:\, x\in S_X\}$ are $\gamma$-SR. 
\end{definition}

We stress that for most trees, the randomness of the construction will require to study the stochastic variability of $\gamma$ (see Section \ref{sec_PRT} where uniform, centered and Mondrian tree are considered). To validate the $\gamma$-SR condition, we now provide an optimal error rate for such $\gamma$-SR local maps when choosing a good value for the volume. In the next statement, we use the notation
$ f \lesssim g $ when there exists a universal constant $ a >0$ such that 
$   f  \leq a  g .$
We write $ f \asymp g $ whenever $f\lesssim g $ and $g\lesssim f$.

\begin{theorem}\label{main_result_localizing_map}   
Under the assumptions of Theorem \ref{th2:general}, if the local map is $\gamma$-SR and if for all $x\in S_X$,
$\lambda(\mathcal V(x) ) \asymp ({\log ( {(n+1)^v} / { \delta} ) } / { n} )  ^{d/(d+2)}$, we have, with probability at least $1 - 3\delta $, for all $x\in S_X$,
    \begin{align*}
        & | \hat g_{\mathcal V}(x) - g(x)| \lesssim c \left(\frac{\log\left( \frac{(n+1)^v}{ \delta} \right) }{ n} \right) ^{1/(d+2)}
    \end{align*}
    where $c = \sqrt{{3 \, \sigma^2}/({\kappa f_X(x)})} +  L(\mathcal V(x) )  \gamma^{1/d}.$ In addition, whenever $S_X$ is bounded and $ f_X(x) \geq b >0 $ for all $x\in S_X$, we have, with probability at least $1 - 3\delta $,
 \begin{align*}
        & \sup_{x\in S_X} | \hat g_{\mathcal V}(x) - g(x)| \lesssim c \left(\frac{\log\left( \frac{(n+1)^v}{ \delta} \right) }{ n} \right) ^{1/(d+2)}
    \end{align*}
    where $c = \sqrt{{3 \, \sigma^2}/({\kappa b })} +  L  \gamma^{1/d}.$
    \end{theorem}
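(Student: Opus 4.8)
The plan is to derive Theorem~\ref{main_result_localizing_map} directly from Theorem~\ref{th2:general} by plugging in the prescribed choice of volume and exploiting the $\gamma$-shape-regularity to control the diameter. First I would recall the high-probability bound of Theorem~\ref{th2:general}: under \ref{cond:epsilon}, \ref{cond:density_X}, Lipschitzness of $g$, and the VC and $(\delta,n)$-large hypotheses, with probability at least $1-3\delta$, simultaneously for all $x\in S_X$,
\[
| \hat g_{\mathcal V}(x) - g(x)| \leq  \sqrt{\frac{ 3 \sigma^2 \log\left( \frac{(n+1)^v}{\delta} \right) }{n  \kappa f_X(x) \lambda ( \mathcal V (x) )   }} + L(\mathcal V(x) ) \diam (\mathcal V(x) ).
\]
The whole argument is then deterministic on the event of probability $1-3\delta$ where this holds.

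The key steps are as follows. Write $r_n := \bigl(\log((n+1)^v/\delta)/n\bigr)^{1/(d+2)}$, so that the hypothesis reads $\lambda(\mathcal V(x)) \asymp r_n^{\,d}$, i.e. there are universal constants $0<a_1\le a_2$ with $a_1 r_n^{\,d} \le \lambda(\mathcal V(x)) \le a_2 r_n^{\,d}$ for all $x$. For the \emph{variance term}, substitute the lower bound $\lambda(\mathcal V(x)) \ge a_1 r_n^{\,d}$ into the square root: the numerator is $3\sigma^2 \log((n+1)^v/\delta)$ and the denominator is $n\,\kappa f_X(x)\,\lambda(\mathcal V(x)) \ge n\,\kappa f_X(x)\, a_1 r_n^{\,d}$; since $\log((n+1)^v/\delta)/(n r_n^{\,d}) = r_n^{2}$ by definition of $r_n$, the variance term is at most $a_1^{-1/2}\sqrt{3\sigma^2/(\kappa f_X(x))}\, r_n$. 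For the \emph{bias term}, use $\gamma$-shape-regularity of $\mathcal V(x)$: by Definition~\ref{def:gamma_regular}, $\diam(\mathcal V(x))^d \le \gamma\, \lambda(\mathcal V(x)) \le \gamma\, a_2 r_n^{\,d}$, hence $\diam(\mathcal V(x)) \le (\gamma a_2)^{1/d} r_n = a_2^{1/d}\gamma^{1/d} r_n$, so $L(\mathcal V(x))\diam(\mathcal V(x)) \le a_2^{1/d} L(\mathcal V(x))\gamma^{1/d} r_n$. Summing the two contributions and absorbing the universal constants $a_1^{-1/2}$ and $a_2^{1/d}$ into the $\lesssim$ gives the pointwise bound with $c = \sqrt{3\sigma^2/(\kappa f_X(x))} + L(\mathcal V(x))\gamma^{1/d}$, as claimed.

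For the sup-norm statement, I would simply take the supremum over $x\in S_X$ on the event above, which is legitimate because the pointwise bound of Theorem~\ref{th2:general} holds \emph{simultaneously} for all $x$. Using $f_X(x)\ge b>0$ bounds the variance constant uniformly by $\sqrt{3\sigma^2/(\kappa b)}$, and using the $L$-Lipschitz property of $g$ gives $L(\mathcal V(x))\le L$ for every $x$ (as noted after the definition of the local Lipschitz constant), so $\sup_x L(\mathcal V(x))\gamma^{1/d}\le L\gamma^{1/d}$; boundedness of $S_X$ ensures $\diam(\mathcal V(x))$ is well defined and the volume condition can be met. This yields $\sup_{x\in S_X}|\hat g_{\mathcal V}(x)-g(x)| \lesssim c\, r_n$ with $c = \sqrt{3\sigma^2/(\kappa b)} + L\gamma^{1/d}$.

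I do not expect a genuine obstacle here: the proof is a one-shot substitution into Theorem~\ref{th2:general} followed by the elementary inequality $\diam^d \le \gamma\lambda$. The only points requiring a modicum of care are bookkeeping the two-sided constants hidden in $\asymp$ (making sure the lower bound on $\lambda$ is used for the variance term and the upper bound for the bias term), checking that the algebraic identity $\log((n+1)^v/\delta)/(n r_n^{\,d}) = r_n^{2}$ is exactly what makes both terms scale like $r_n$, and noting that the event is the same for the pointwise and uniform conclusions so no additional union bound or probability budget is needed beyond the $3\delta$ already spent in Theorem~\ref{th2:general}.
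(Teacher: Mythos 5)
Your proposal is correct and follows essentially the same route as the paper: plug the prescribed volume scaling into the bound of Theorem~\ref{th2:general}, use $\diam(\mathcal V(x))^d \leq \gamma\,\lambda(\mathcal V(x))$ for the bias, and track the two-sided constants hidden in $\asymp$ (the paper bounds both terms by $\lambda(\mathcal V(x))^{1/d}$ first and then converts to the rate, while you substitute $r_n$ directly — an immaterial difference in bookkeeping). The sup-norm conclusion is handled exactly as you describe, on the same event with $f_X \geq b$ and $L(\mathcal V(x)) \leq L$.
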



\section{Data-dependent local regression maps}\label{s5}

In this section, we show that shape regularity is useful to analyse local regression maps that are data-dependent. The first example is the nearest neighbors regression estimator and the second one is a modified version of the CART-like algorithm.

\subsection{Nearest neighbors regression}\label{s61}

Nearest neighbors regression estimators are local maps estimators for which
$\mathcal V (x) = B (x, \hat \tau _{n,k}(x) ) $
where $ \hat \tau _{n,k}(x)$ has been defined in Section \ref{s2}, Example 3. Using the standard index order to break possible ties, we have 
$ \PP_n (\mathcal V (x)  ) = k/n  $
and, by relying on Theorem \ref{th:general}, we obtain the following result.

\begin{theorem}\label{th:NN}
Let $\delta \in (0,1/3) $, $n\geq 1$, $d\geq 1$ and $k\geq  8    \log( 4 (2n+1) ^{(d+1) } / \delta ) $. Let $\mathcal V $ be obtained from nearest neighbors algorithm as detailed in Example 3. Suppose that \ref{cond:density_X} is  valid for all balls with radius smaller than $T_0>0$. Suppose that \ref{cond:epsilon} is fulfilled, and that $g$ is $L$-Lipschitz on $S_X$. We have with probability at least $1-3\delta$, for all $x\in S_X$ such that $2 k \leq   T_0^d n \kappa f_X(x)$,
\begin{align*}
    &|\hat g_{\mathcal V} (x) - g(x) | \leq  \sqrt{\frac{2\sigma^2\log ( {(n+1)^{d+1}/\delta })  }{ k }} + 2 \left(\frac{ 2 k   }{  n \kappa f_X(x)  }   \right)^{1/d} L(\mathcal{V}(x)).
\end{align*}
\end{theorem}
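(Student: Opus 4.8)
The plan is to verify that the $k$-NN local map satisfies the hypotheses of Theorem \ref{th2:general} (or rather of Theorem \ref{th:general} together with the control of the radius), and then to bound the diameter of each $\mathcal V(x) = B(x,\hat\tau_{n,k}(x))$ via the minimal mass assumption. First I would record the two elementary facts that are specific to nearest neighbors: the class $\mathcal A = \{B(x,r) : x\in\RR^d, r>0\}$ of Euclidean balls is VC with $vc(\mathcal A) = d+1$ (cited earlier from \cite{wenocur1981some}), so the local map is VC with dimension $v = d+1$; and, breaking ties by index order, exactly $k$ sample points fall in $B(x,\hat\tau_{n,k}(x))$, hence $\PP_n(\mathcal V(x)) = k/n$ deterministically. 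The assumption $k \geq 8\log(4(2n+1)^{d+1}/\delta)$ is then precisely the $(\delta,n)$-large condition, since $n\PP_n(\mathcal V(x)) = k$.

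Next I would apply the variance bound. From Theorem \ref{th:general} (applied with $v=d+1$), with probability at least $1-2\delta$, for all $x\in S_X$,
\begin{align*}
|\hat g_{\mathcal V}(x) - g(x)| \leq \sqrt{\frac{2\sigma^2\log((n+1)^{d+1}/\delta)}{n\,\PP_n(\mathcal V(x))}} + L(\mathcal V(x))\diam(\mathcal V(x)) = \sqrt{\frac{2\sigma^2\log((n+1)^{d+1}/\delta)}{k}} + L(\mathcal V(x))\diam(\mathcal V(x)).
\end{align*}
So it remains to control $\diam(\mathcal V(x)) = 2\hat\tau_{n,k}(x)$ uniformly on the good event. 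The strategy here is the classical one for $k$-NN radii: bound $\hat\tau_{n,k}(x)$ above by showing that a ball of a suitably chosen deterministic radius $r$ around $x$ already contains at least $k$ sample points with high probability. Concretely, if $\PP(B(x,r)) \geq 2k/n$ and $r \leq T_0$, then a relative deviation / multiplicative Chernoff bound on the binomial $n\PP_n(B(x,r))$ gives $\PP_n(B(x,r)) \geq k/n$ except on an event of probability at most $\delta$ (after a union bound turned into a uniform-in-$x$ statement through the VC property of balls — this is where a third $\delta$ enters, matching the $1-3\delta$ in the conclusion). On that event $\hat\tau_{n,k}(x) \leq r$. Then I would invoke assumption \ref{cond:density_X}, valid for balls of radius $\leq T_0$: $\PP(B(x,r)) \geq \kappa f_X(x)\lambda(B(x,r)) = \kappa f_X(x)\,\omega_d\, r^d$ where $\omega_d$ is the volume of the unit ball; actually, to avoid carrying $\omega_d$ and to match the stated constant, I would use instead the cruder bound $\lambda(B(x,r)) \geq$ (something) or simply choose $r$ so that $\kappa f_X(x)\lambda(B(x,r)) \geq 2k/n$. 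Solving $\kappa f_X(x)\,\omega_d r^d \geq 2k/n$ gives $r \asymp (2k/(n\kappa f_X(x)))^{1/d}$ up to the constant $\omega_d^{-1/d}$; the hypothesis $2k \leq T_0^d n\kappa f_X(x)$ (possibly with $\omega_d$ absorbed) is exactly what makes this admissible choice satisfy $r \leq T_0$. This yields $\diam(\mathcal V(x)) = 2\hat\tau_{n,k}(x) \leq 2r \leq 2(2k/(n\kappa f_X(x)))^{1/d}$, and substituting into the display above gives the claimed bound, with total failure probability $\leq 3\delta$.

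The main obstacle is the uniform-in-$x$ control of the $k$-NN radius: a pointwise Chernoff bound on $\PP_n(B(x,r))$ is immediate, but the statement is "for all $x\in S_X$", so I need a uniform lower bound on $\PP_n(B(x,r))$ over the (infinite) family of balls, which is where the VC dimension of balls and a relative Vapnik–Chervonenkis inequality (or a ratio-type concentration bound, e.g.\ \cite{wellner1996}) must be used; getting the constants to collapse into the clean factor $2$ and $8$ appearing in the statement, and checking that the radius $r$ chosen for the minimal-mass step stays $\leq T_0$ under the stated hypothesis $2k \leq T_0^d n\kappa f_X(x)$, is the delicate bookkeeping. A secondary subtlety is the tie-breaking convention ensuring $\PP_n(\mathcal V(x)) = k/n$ exactly rather than $\geq k/n$, which matters only for making the variance term clean; if one only had $\geq k/n$ the same bound would still follow since the variance term is decreasing in $n\PP_n(\mathcal V(x))$.
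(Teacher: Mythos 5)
Your proposal is correct and follows essentially the same route as the paper: a deterministic radius $\tau(x)$ with $\tau(x)^d = 2k/(n\kappa f_X(x))$ (admissible since $2k\le T_0^d n\kappa f_X(x)$), a uniform relative Vapnik-type inequality over the VC class of balls (dimension $d+1$) to get $n\PP_n(B(x,\tau(x)))\ge k$ using $k\ge 8\log(4(2n+1)^{d+1}/\delta)$, hence $\hat\tau_{n,k}(x)\le\tau(x)$ on an event of probability $1-\delta$, combined with Theorem \ref{th:general} (with $n\PP_n(\mathcal V(x))=k$ and $v=d+1$) for the remaining $1-2\delta$. The bookkeeping you flag (the factor $2$, the threshold $8$, and the implicit absorption of the unit-ball volume into the minimal-mass constant) is handled in the paper exactly as you anticipate.
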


Note that the conditions on the value of $k$ are satisfied for $n$ sufficiently large   and $k\asymp n^a $, for any $a\in (0,1)$.   The assumption that \ref{cond:density_X} is  valid for all balls with radius smaller than $T_0>0$ is easy to check when the density is lower bounded by a constant $b>0$ on $S_X$ (in which case $S_X$ must be compact) and when $S_X$ satisfies $ \int _{S_X \cap B(x,\tau )} \geq \kappa_0 \tau ^d $ for all $\tau \leq T_0$, for some $\kappa_0 >0$ and $T_0>0$. This is done in \cite{jiang2019non,portier2021nearest} and extended to unbounded sets $S_X$ in \cite{gadat2016classification}. 
\color{black}

To our knowledge the above result is new among the nearest neighbors literature, in which uniform deviation inequalities are provided, to our best knowledge, for densities uniformly bounded away from $0$. Such results have been investigated recently in \cite{jiang2019non} and \cite{portier2021nearest} for compactly supported covariates. In contrast, the above upper bound is valid for all $x$ in any domain $S_X$, at the price of accounting for regions with low density value that may deteriorate the accuracy. We have the following corollary in which we consider an optimal choice for $k$ as well as a lower bounded assumption on the density.

\begin{corollary}\label{corknn}
Assuming that $ n $ is sufficiently large, then choosing the integer $k \asymp n^{2/(d+2)} \log((n+1)^{d+1}/\delta)^{d/(d+2)}$ in Theorem \ref{th:NN} yields the following inequality with probability at least $1-3\delta$, for all $x\in S_X$, 
   \begin{align*}
      |\hat g_{\mathcal V}(x) - g(x) |  \lesssim c \left( \dfrac{\log((n+1)^{d+1}/\delta)}{n}\right)^{1/(d+2)},
   \end{align*}
    where $c =   \sqrt{2\sigma^2} + 2 L(\mathcal{V}(x)) \left[{2}/({\kappa f_X(x)}) \right]^{1/d} .$  In addition, whenever $S_X$ is bounded and $ f_X(x) \geq b >0 $ for all $x\in S_X$, we have, when $n$ is sufficiently large, with probability at least $1 - 3\delta $,
 \begin{align*}
        & \sup_{x\in S_X} | \hat g_{\mathcal V}(x) - g(x)| \lesssim c \left( \dfrac{\log((n+1)^{d+1}/\delta)}{n}\right)^{1/(d+2)},
    \end{align*}
    where $c =   \sqrt{2\sigma^2} + 2 L \left[{2}/({\kappa b}) \right]^{1/d} .$  
\end{corollary}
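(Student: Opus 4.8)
The plan is to simply substitute the prescribed choice of $k$ into the deviation inequality of Theorem \ref{th:NN} and verify that the hypotheses of that theorem are met for $n$ large enough. First I would check that the two numerical constraints on $k$ hold: for $k \asymp n^{2/(d+2)} \log((n+1)^{d+1}/\delta)^{d/(d+2)}$ one has $k \to \infty$ like a positive power of $n$, so the requirement $k \geq 8\log(4(2n+1)^{d+1}/\delta)$ — whose right-hand side grows only logarithmically — is satisfied once $n$ exceeds a threshold depending on $d$ and $\delta$; similarly $2k \leq T_0^d n \kappa f_X(x)$ holds because $k = \oh(n)$, uniformly in $x$ when $f_X$ is bounded below, and for the pointwise statement for the fixed $x$ at hand. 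This justifies invoking Theorem \ref{th:NN}.

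Next I would plug the value of $k$ into each of the two terms of the bound. For the variance term, $\sqrt{2\sigma^2 \log((n+1)^{d+1}/\delta)/k}$ becomes, up to the universal constant absorbed in $\asymp$,
\[
\sqrt{2\sigma^2}\,\left(\frac{\log((n+1)^{d+1}/\delta)}{n^{2/(d+2)}\log((n+1)^{d+1}/\delta)^{d/(d+2)}}\right)^{1/2}
= \sqrt{2\sigma^2}\,\left(\frac{\log((n+1)^{d+1}/\delta)}{n}\right)^{1/(d+2)},
\]
after collecting the exponents $1-\tfrac{d}{d+2}=\tfrac{2}{d+2}$ on the log and observing $n^{-2/(d+2)}$ raised to the $1/2$ gives $n^{-1/(d+2)}$. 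For the bias term, $2\big(2k/(n\kappa f_X(x))\big)^{1/d} L(\mathcal V(x))$ becomes
\[
2 L(\mathcal V(x)) \left(\frac{2}{\kappa f_X(x)}\right)^{1/d}\left(\frac{n^{2/(d+2)}\log((n+1)^{d+1}/\delta)^{d/(d+2)}}{n}\right)^{1/d},
\]
and $\big(n^{2/(d+2)-1}\big)^{1/d} = \big(n^{-d/(d+2)}\big)^{1/d} = n^{-1/(d+2)}$ while $\big(\log(\cdot)^{d/(d+2)}\big)^{1/d} = \log(\cdot)^{1/(d+2)}$, so this term is also $\asymp c'\,(\log((n+1)^{d+1}/\delta)/n)^{1/(d+2)}$ with $c' = 2L(\mathcal V(x))(2/(\kappa f_X(x)))^{1/d}$. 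Summing the two contributions and identifying $c = \sqrt{2\sigma^2} + 2L(\mathcal V(x))(2/(\kappa f_X(x)))^{1/d}$ gives the pointwise claim.

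For the uniform statement, I would repeat the same computation using the fact that when $S_X$ is bounded and $f_X \geq b > 0$ throughout $S_X$, Theorem \ref{th:NN} applies simultaneously at every $x \in S_X$ (the constraint $2k \leq T_0^d n \kappa f_X(x)$ holds uniformly since $f_X(x) \geq b$), and $L(\mathcal V(x)) \leq L$ and $f_X(x) \geq b$ can be used to bound $c$ by the uniform constant $\sqrt{2\sigma^2} + 2L(2/(\kappa b))^{1/d}$; taking the supremum over $x$ on the left and this constant bound on the right yields the second inequality on the same event of probability at least $1-3\delta$. The only genuinely non-routine point — and hence the "main obstacle" — is the bookkeeping needed to confirm that "$n$ sufficiently large" can be made to depend only on $d,\delta$ (and on $b, \kappa, T_0$ in the uniform case) and not on the particular $x$, so that the chosen $k$ is a single integer sequence valid for the whole statement; this is where one must be slightly careful, but it follows because all the constraints are either $x$-free or are weakened by the lower bound $f_X \geq b$.
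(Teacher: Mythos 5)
Your proposal is correct and follows essentially the same route as the paper: substitute the prescribed $k$ into the bound of Theorem \ref{th:NN}, verify that the two constraints on $k$ (the lower bound $k \gtrsim \log(\cdot)$ and the upper bound $2k \leq T_0^d n \kappa f_X(x)$) hold for $n$ large enough, and for the sup-norm statement bound the constant using $f_X \geq b$ and $L(\mathcal V(x)) \leq L$. The exponent bookkeeping in both terms matches the paper's computation, so nothing further is needed.
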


We note that the convergence rate is the same as in Theorem \ref{main_result_localizing_map}. However, the constant $c$ differs significantly in the two results as when $f_X(x)$ is small, the constant in Theorem \ref{main_result_localizing_map} is of order \( ({\kappa f_X(x)} )^{-1/2} \), whereas in Theorem \ref{th:NN} it scales as \( (\kappa f_X(x))^{-1/d} \).

\subsection{CART-like regression tree}\label{s51}

We now consider general local regression trees for which each split is selected using a general cost function. In particular, the deviation inequality obtained below is valid for local regression maps that may depend on the whole dataset $(X_i,Y_i)_{i=1,\ldots,n }$ and not only on the covariates as in nearest neighbor algorithm. We call these trees "CART-like", since the CART algorithm is arguably the most important instance of such data-dependent regression trees, due to its wide fame and use in practice.

Let us introduce a general class of recursive data dependent trees.
For a given cell $V$, a split is characterized by two parameters $(p, u) \in S : = \{ 1,\ldots, d\}\times  (0,1)$. The resulting left and right child cells, $V(l)$  and $V(r)$, are such that for any $k \neq p$, $h_k(V(l))=h_k(V(r))  = h _k (V) $, and for $k=p$, $ h_k(V(l)) = h_k(V) u$ and $ h_k(V(r)) = h_k(V) (1-u) $. We also recall that $h_-(V) = \min_{k = 1,\ldots, d} h_k(V) $ and $h_+(V) = \max_{k = 1,\ldots, d} h_k(V) $.  With this notation, the split condition for $V$ to be $\beta$-shape regular can be expressed with the help of a restriction on the set of valid splits. Given $V$, let us define the set of $\beta$-shape regular splits as
$$ S_\beta(V) : = \{  (p,u)  \in S  \  : \  h_+( V(s)) \leq \beta h_-( V(s)),\,  \forall s  \in \left\{ l,r \right\}   \}. $$
We note that when $\beta \geq 2$, the  $S_\beta(V)$ cannot be empty. Splitting the largest side in the middle is always in $ S_\beta(V) $. Another restriction on the splits is needed to ensure a sufficient number of points. It is given by
$$ S_m(V) : = \{  (p,u) \in S  \  : \  n\mathbb P_n( V(s) ) \geq m , \, \forall s  \in \left\{ l,r \right\}   \} .$$
We do not need to fully specify the splitting criterion. When $S_m(V)\neq \emptyset$, the split in the cell $V$ is defined as a minimizer on $S_\beta(V)  \cap S_m(V)$, of a cost function $M_n$, given by
\begin{align*}
    M_n\, : \, S \times \mathcal R ([0,1]^d)  \to &\, \mathbb R \\
    ((p,u), V) \mapsto &\,  M_n ((p,u), V),
\end{align*}
where $ \mathcal R ([0,1]^d)$ is the set of hyper-rectangles in $S_X$. In case $S_m(V) = \emptyset$, no split is performed and the cell $V$ remains unchanged. The main strength of our analysis lies in the generality of the cost function, which can actually be any function that may depend or not on the sample. For instance, in CART-regression, the cost function depends on the sample and is defined as
$$ M_n ((p,u), V) = \frac{\sum_{i : X_i\in V(l) } (Y_i - \overline Y( V(l))) ^2 }{n\mathbb P_n (V(l) )  }  + \frac{\sum_{i : X_i\in V(r) } (Y_i - \overline Y(V(r))) ^2}{n\mathbb P_n (V(r) )   } $$
where $ \overline Y(V )= \sum_{i : X_i\in V } Y_i / (n\mathbb P_n (V ))$ for any cell $V$. 

\begin{algorithm}[h]
\begin{algorithmic}[h]
\Statex{\textbf{Input:} Sample  $(X_i, Y_i)_{i=1,\ldots, n} \subset [0,1]^d \times \mathbb R$, minimal number of points $m\in \{1,\ldots, n\} $, shape-regularity $\beta \geq 2$, cost function $M_n: S \times \mathcal R ([0,1]^d)  \to   \mathbb R$.
Let $V^{(0)} = \{[0,1]^d\} $ be the initial partition, made of one element (i.e. $| V^{(0)}|=1$).}
   \For{$j= 0,1,\ldots$}
\Statex\hspace{\algorithmicindent}{Let $V^{(j+1)} = \emptyset$} denote the partition at step $j+1$. The update is as follows: 
   \For{$k=  1,2,\ldots, | V^{(j)}|$} 
\Statex\hspace{\algorithmicindent}\hspace{\algorithmicindent}
(a) Whenever $S_m(V^{(j)}_k) \neq \emptyset $, define two children, $V(l)$ and $V(r)$, according to
$$\argmin_{(p,u) \in S_\beta(V^{(j)}_k)\cap S_m(V^{(j)}_k) } M_n( (p,u) , V^{(j)}_k) 
$$
\hspace{\algorithmicindent}\hspace{\algorithmicindent}{ If the above optimization problem has no solution, just pick $p$ as the largest side} 
\Statex\hspace{\algorithmicindent}\hspace{\algorithmicindent}{ and $u = 1/2$. Set $$V^{(j+1)} = \{ V^{(j+1)}, V (l), V (r)\}$$
}
\Statex \hspace{\algorithmicindent}\hspace{\algorithmicindent}{
(b) Whenever $S_m(V^{(j)}_k) = \emptyset $, child is same as parent. Set
  $$V^{(j+1)} = \{ V^{(j+1)}, V^{(j)}_k \}$$
}
\EndFor
\Statex\hspace{\algorithmicindent}{STOP if $ V^{(j+1)} = V^{(j)} $ (no valid split exists)}
   \EndFor
\Statex{Return the final partition elements $V^{(j+1)}$}
\end{algorithmic}
\caption{CART-like regression tree}
\label{alg:cart-like}
\end{algorithm}

By splitting on $S_\beta$ and $S_m$,  Algorithm \ref{alg:cart-like} ensures that two conditions are met when growing the tree. The first growing condition, that is the $\beta$-shape regularity of the cell, may not constitute a stopping criterion. Indeed, because $\beta\geq 2$, one can always split at the middle the largest side of the considered cell. The other growing condition on $m$ is easy to check in practice since it amounts to keep a cell as a leaf if and only if the number of data points belonging to that cell is greater than $m$ and strictly smaller than $2m$. As a consequence, one might modify classical algorithms, in the case precisely where the split proposed by the algorithm does not respect the $\beta-$shape-regularity condition for a prescribed value of $\beta$ or the other growing condition asking for sufficiently many points in the cells. The next theorem gives a deviation inequality for the associated regression map. 

\begin{theorem}\label{th_cart_like1}
Let $S_X=[0,1]^d$, $\delta \in (0,1/3) $, $n\geq 1$, $d\geq 1$, $\beta\geq 2$ and $m\in \{1,\ldots,  n\}$ such  that $m\geq  4\log(4(2n+1)^{2d}/\delta )$. Suppose that \ref{cond:density_X} and \ref{cond:epsilon} are fulfilled and that $g$ is $L$-Lipschitz. Let $\mathcal V$ be the local regression map obtained from a CART-like tree with input parameters $\beta $, $m$ and cost function $M_n$, then we have, with probability $1-3\delta$, for all $x \in S_X$,
    \begin{align*} 
            |  \hat g_\mathcal V (x) - g(x)  | \leq \sqrt { \frac{2 \sigma ^2 \log((n+1) ^{2d}/\delta) } {  m}} + L(\mathcal{V}(x)) \beta \sqrt{d} \left(\frac{5 m}{ n f_X(x) \kappa}\right)^{1/d}.
    \end{align*}
\end{theorem}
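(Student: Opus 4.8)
The idea is to obtain the bound by specializing the general deviation inequality of Theorem \ref{th:general} to the local map produced by Algorithm \ref{alg:cart-like}, and then to control the two resulting terms using the structural guarantees built into the algorithm. Every cell of the tree is a hyper-rectangle contained in $[0,1]^d$, so $\{\mathcal V(x):x\in S_X\}$ lies in the class of hyper-rectangles of $\mathbb R^d$, which is VC with dimension $2d$; this is why the logarithmic factor is $\log((n+1)^{2d}/\delta)$. Applying Theorem \ref{th:general} with $v=2d$ gives, on an event $\Omega_1$ of probability at least $1-2\delta$,
\[
|\hat g_{\mathcal V}(x)-g(x)|\;\le\;\sqrt{\frac{2\sigma^2\log((n+1)^{2d}/\delta)}{n\mathbb P_n(\mathcal V(x))}}\;+\;L(\mathcal V(x))\,\diam(\mathcal V(x)),\qquad x\in S_X.
\]
It then remains to lower bound $n\mathbb P_n(\mathcal V(x))$ (to handle the first term) and to upper bound $\diam(\mathcal V(x))$ (to handle the second).

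For the first term, the key structural fact is that every leaf $\mathcal V(x)$ of the tree is $\beta$-shape-regular and satisfies $m\le n\mathbb P_n(\mathcal V(x))<2m$. Shape regularity propagates along the recursion: the root $[0,1]^d$ is $\beta$-SR, every split chosen in step (a) lies in $S_\beta(\cdot)$ by construction, and the fallback split — the middle of the longest side — is $\beta$-SR because $\beta\ge 2$. The bound $n\mathbb P_n(\mathcal V(x))\ge m$ is enforced by restricting admissible splits to $S_m(\cdot)$ (the root contains $n\ge m$ points, and each retained split keeps at least $m$ points in either child). The bound $n\mathbb P_n(\mathcal V(x))<2m$ holds because a cell carrying at least $2m$ sample points can always be split: moving a threshold along any fixed coordinate makes the left-child count increase from $0$ to $n\mathbb P_n(V)$, and since the covariate law is continuous (no two sample coordinates coincide almost surely) it takes the value $m$ somewhere, which produces a split in $S_m(V)$; hence such a cell is not a leaf. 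Substituting $n\mathbb P_n(\mathcal V(x))\ge m$ into the display bounds the first term by $\sqrt{2\sigma^2\log((n+1)^{2d}/\delta)/m}$.

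For the bias term I would first move from the empirical to the true mass. A relative (multiplicative) VC deviation inequality for the class of hyper-rectangles gives, on an event $\Omega_2$ of probability at least $1-\delta$, a bound of the form $n\mathbb P(A)\le 2\,n\mathbb P_n(A)+c\log\big((2n+1)^{2d}/\delta\big)$ uniformly over hyper-rectangles $A$; applied to $A=\mathcal V(x)$, together with $n\mathbb P_n(\mathcal V(x))<2m$ and the hypothesis $m\ge 4\log\!\big(4(2n+1)^{2d}/\delta\big)$, this yields $n\mathbb P(\mathcal V(x))\le 5m$, which is where the constant $5$ comes from. The minimal mass assumption \ref{cond:density_X} converts this into a volume bound, $\lambda(\mathcal V(x))\le \mathbb P(\mathcal V(x))/(\kappa f_X(x))\le 5m/(n\kappa f_X(x))$. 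Finally, for a $\beta$-SR hyper-rectangle $A$ with side lengths $h_1,\dots,h_d$, one has $\diam(A)=\sqrt{\sum_k h_k^2}\le\sqrt d\,h_+(A)$ and, since $\lambda(A)=\prod_k h_k\ge h_+(A)\,(h_+(A)/\beta)^{d-1}$, also $h_+(A)\le\beta\,\lambda(A)^{1/d}$; hence $\diam(\mathcal V(x))\le\beta\sqrt d\,\lambda(\mathcal V(x))^{1/d}\le\beta\sqrt d\,(5m/(n\kappa f_X(x)))^{1/d}$. On $\Omega_1\cap\Omega_2$, which has probability at least $1-3\delta$, adding the two bounds gives the claim.

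I expect the delicate point to be the leaf-size control, namely making the upper bound $n\mathbb P_n(\mathcal V(x))<2m$ fully rigorous in the presence of coordinate ties (handled via continuity of $\mathbb P$) and of the algorithm's fallback rule, and tracking the constants in the relative VC inequality tightly enough that the assumption $m\ge 4\log(4(2n+1)^{2d}/\delta)$ produces exactly the factor $5$. The remainder is a direct appeal to Theorem \ref{th:general}, to assumption \ref{cond:density_X}, and to elementary geometry of $\beta$-shape-regular hyper-rectangles.
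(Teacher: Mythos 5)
Your proposal is correct and follows essentially the same route as the paper: apply Theorem \ref{th:general} with the hyper-rectangle class ($v=2d$), use the leaf constraints $m \le n\PP_n(\mathcal V(x)) \le 2m$ enforced by the splitting rules, invoke the normalized Vapnik inequality (Theorem \ref{th:vapnik_normalized}) together with $m\ge 4\log(4(2n+1)^{2d}/\delta)$ to get $\PP(\mathcal V(x))\le 5m/n$, and then convert this via \ref{cond:density_X} and $\beta$-shape regularity into the diameter bound, with the same $1-3\delta$ accounting. The only cosmetic difference is that you bound $h_+$ through $\lambda(\mathcal V(x))^{1/d}$ whereas the paper bounds $h_-$ through $\PP(\mathcal V(x))$ and then uses $h_+\le\beta h_-$; these are equivalent, and your treatment of the leaf-size bounds is at the same level of rigor as the paper's.
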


Note that the conditions on the value of $m$ are satisfied whenever $n$ is sufficiently large and $m\asymp n^a $, 
 for any $a\in (0,1)$. Notice that taking $m = n^{2/(d+2)}$ in the estimation bound of Theorem \ref{th_cart_like1} gives the optimal convergence rate $n^{-1/(d+2)}$, up a multiplicative logarithmic term. Moreover, such a value of $m$ allows the bound to be valid with a probability that grows to one polynomially in $n$, since the constraint $m \geq 4\log(4(2n+1)^{2d}/\delta )$ will be then satisfied.
In addition, such results and comments remain valid for the rate of convergence in sup-norm whenever the density $f$ is uniformly bounded from below by a positive constant independent of $n$. This is stated in the subsequent corollary.

\begin{corollary}\label{cor_cart_like1}
In Theorem \ref{th_cart_like1}, if the integer $m$ is chosen as $m \asymp n^{2/(d+2)} \log((n+1)^{2d}/\delta)^{d/(d+2)}$, then we have the following inequality for $n$ sufficiently large with probability at least $1-3\delta$, for all $x\in S_X$, 
   \begin{align*}
   | \hat g_{\mathcal V}(x) - g(x)|  \lesssim c \left( \dfrac{\log((n+1)^{2d}/\delta)}{n}\right)^{1/(d+2)},
   \end{align*}
    where $c =   \sqrt{2\sigma^2} + \beta L(\mathcal{V}(x)) \sqrt{d} \left[{5}/({\kappa f_X(x)}) \right]^{1/d}$.  In addition, whenever $S_X$ is bounded and $ f_X(x) \geq b >0 $ for all $x\in S_X$, we have for $n$ sufficiently large and with probability at least $1 - 3\delta $,
   \begin{align*}
      \sup_{x\in S_X} | \hat g_{\mathcal V}(x) - g(x)|  \lesssim c \left( \dfrac{\log((n+1)^{2d}/\delta)}{n}\right)^{1/(d+2)},
   \end{align*}
  where $c =   \sqrt{2\sigma^2} + \beta L \sqrt{d} \left[{5}/({\kappa b}) \right]^{1/d}$.
\end{corollary}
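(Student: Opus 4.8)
The plan is to obtain Corollary~\ref{cor_cart_like1} simply by specializing Theorem~\ref{th_cart_like1} to the prescribed value of $m$ and simplifying, after first checking that this choice is admissible. Write $L_n := \log((n+1)^{2d}/\delta)$ and fix $m$ to be an integer with $m \asymp n^{2/(d+2)} L_n^{d/(d+2)}$, for instance $m = \lceil n^{2/(d+2)} L_n^{d/(d+2)}\rceil$. Since $2/(d+2) < 1$, this $m$ grows like a positive power of $n$ up to a polylogarithmic factor; in particular, for $n$ large enough it satisfies both $m \leq n$ and $m \geq 4\log(4(2n+1)^{2d}/\delta)$, the latter because its right-hand side is only logarithmic in $n$. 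Hence $m\in\{1,\dots,n\}$ meets the hypotheses of Theorem~\ref{th_cart_like1}, which then yields, on an event of probability at least $1-3\delta$, for all $x\in S_X$,
$$ |\hat g_{\mathcal V}(x) - g(x)| \leq \sqrt{\frac{2\sigma^2 L_n}{m}} + L(\mathcal V(x))\,\beta\sqrt{d}\left(\frac{5m}{n f_X(x)\kappa}\right)^{1/d}. $$

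Next I would evaluate the two terms for this choice of $m$. For the variance term, $L_n/m \asymp L_n^{\,1-d/(d+2)}\, n^{-2/(d+2)} = (L_n/n)^{2/(d+2)}$, so $\sqrt{2\sigma^2 L_n/m} \asymp \sqrt{2\sigma^2}\,(L_n/n)^{1/(d+2)}$. For the bias term, $m/n \asymp n^{-d/(d+2)} L_n^{d/(d+2)} = (L_n/n)^{d/(d+2)}$, so $\bigl(5m/(n f_X(x)\kappa)\bigr)^{1/d} \asymp (5/(\kappa f_X(x)))^{1/d}\,(L_n/n)^{1/(d+2)}$. Adding these two contributions and using $\lesssim$ to absorb the universal constants (including the integer rounding of $m$) gives the claimed pointwise bound with $c = \sqrt{2\sigma^2} + \beta L(\mathcal V(x))\sqrt{d}\,[5/(\kappa f_X(x))]^{1/d}$. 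The exponent $2/(d+2)$ is precisely the one that equalizes the rates of the variance and bias terms, which is why it produces the optimal rate $n^{-1/(d+2)}$ up to the logarithmic factor.

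Finally, for the sup-norm statement I would observe that the bound of Theorem~\ref{th_cart_like1} holds simultaneously for all $x\in S_X$ on a single event of probability at least $1-3\delta$. Under the extra hypothesis $f_X(x) \geq b$ for all $x\in S_X$, combined with $L(\mathcal V(x)) \leq L$ (valid since $g$ is $L$-Lipschitz, as noted after Definition~\ref{def:gamma_regular}), the right-hand side is bounded uniformly in $x$ upon replacing $f_X(x)$ by $b$ and $L(\mathcal V(x))$ by $L$; taking the supremum over $x\in S_X$ and substituting the same $m$ as above yields the sup-norm bound with $c = \sqrt{2\sigma^2} + \beta L\sqrt{d}\,[5/(\kappa b)]^{1/d}$. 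I do not expect any genuine obstacle here: all the analytic content is already in Theorem~\ref{th_cart_like1}, and the only care needed is in tracking the polylogarithmic factors through the algebra and verifying the admissibility of $m$ for $n$ large.
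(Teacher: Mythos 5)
Your proposal is correct and follows essentially the same route as the paper: the paper's proof simply reasons as in Corollary \ref{corknn}, i.e., it verifies the admissibility of the prescribed $m$ for $n$ large, substitutes it into the bound of Theorem \ref{th_cart_like1}, balances the variance and bias terms (absorbing the $\asymp$ constants into $\lesssim$), and obtains the sup-norm version by bounding $f_X$ below by $b$ and $L(\mathcal V(x))$ by $L$. No gaps.
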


The previous result shows that CART-like regression trees are able to attain the optimal rate of convergence as soon as a simple constraint - restricting acceptable splits by a simple rule - is imposed during the tree construction. 

Interestingly, results presented in \cite{cattaneo2022pointwise} tend to indicate that such modifications are in general necessary for the classical CART algorithm to achieve a good pointwise - or uniform - behavior. More precisely, it is shown in \cite{cattaneo2022pointwise} that the use of CART is problematic for the estimation of a constant regression function,  measured with the sup-norm error. Indeed, its rate of convergence in dimension one is slower than any polynomial of the sample size $n$, with non-vanishing probability. In addition, the honest version of CART - i.e. when the prediction values among the cells use data that are independent of those used to construct the partition, see Definition 5.1 in \cite{cattaneo2022pointwise} -, is proved to be inconsistent with positive probability as soon as the tree depth is of order at least $\log(\log(n))$.  This is due to the fact that the splitting criterion produces leaves that are too small.  

Our results complete the picture drawn in \cite{cattaneo2022pointwise} by putting forward the fact that producing too small cells is \textit{the only problem} that can occur with the use of CART in dimension one. Indeed, any cell being $\beta$-shape-regular in dimension one, with $\beta=1$, Theorem \ref{th_cart_like1} shows that the only problem must come from the amont of data $m$ in the least populated cell. Indeed, if $m$ is of order $\log(n)$, then our deviation does not converge to zero when $\delta$ is fixed and the sample size goes to infinity. This is basically what happens in \cite{cattaneo2022pointwise}. In such case, we are indeed not able to prove the consistency of CART. On the contrary, when $m$ is of order $n^{2/(d+2)}$, Corollary \ref{cor_cart_like1} shows that our modified version of CART is rate optimal in sup-norm.





\section{Purely random trees}\label{sec_PRT}







We consider now purely random trees (PRT), that are built by successively refining a partition of the space, in a way that is independent from the initial sample $\mathcal D_n$. Before considering uniform, centered and Mondrian trees, we start by studying a key property of Lebesgue volume invariance which will be satisfied for the trees of interest. In this section we assume, for clarity, that $S_X= [0,1] ^d$ and we always take $x\in S_X$.


\subsection{Lebesgue volume invariance}\label{subsection_VIR}



To set up notations, let us describe a PRT locally around a point $x$ using the local maps framework introduced before. The tree is generated iteratively, and at each step $i$, for the cell $\mathcal V(x)$ containing $x$, a coordinate is selected according to a random variable $D_i\in \{ 1,\ldots,d\}$ and then the side of the cell in direction $D_i$, that we write $(a,b)$, $a<b$, is split into two intervals $(a,a+(b-a)S_i)$ and $(a+(b-a)S_i,b)$, thus defining two new cells $C_1$ and $C_2$. Consequently, each step $i$ consists in splitting a cell and depends on a pair of random variables $(D_i,S_i)$, that is independent from the dataset $\mathcal D_n$. After $N$ steps, we denote $\mathcal V(x)=\mathcal V (x, (D_i,S_i)_{i=1}^N)$.

In the following proposition, we state the remarkable fact that the Lebesgue volume of the cell can be expressed independently from the successive coordinate choices. We denote $\bar{S}_i$ the length reduction of the side $D_i$ of the considered cell at step $i$, that is either equal to $S_i$ or $1-S_i$ according to the fact that the coordinate $x_{D_i}$ is smaller or greater than $a+(b-a)S_i$ respectively.

\begin{proposition}[Lebesgue Volume Invariance] \label{prop_leb_vol_inv}
With the notations above, the following formula holds
\begin{equation*} \label{eq_vol}
    \lambda(\mathcal V(x,(D_i,S_i)_{i=1}^N))=\prod_{i=1}^N \bar{S}_i.
\end{equation*}
Assume in addition that for any $i$, the distribution of $S_i$ is symmetric around $1/2$, that is, $S_i \sim 1-S_i$. Then we get the following equality in distribution,
    \begin{equation*}\label{eq_dist_inv}
        \lambda (\mathcal V(x,(D_i,S_i)_{i=1}^N)) \sim  \prod_{i=1}^N S_i.
    \end{equation*}
\end{proposition}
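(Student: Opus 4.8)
### Proof plan for Proposition~\ref{prop_leb_vol_inv}

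The plan is to prove the volume formula by induction on the number of steps $N$. The base case $N=0$ is trivial since $\mathcal V(x)=[0,1]^d$ has Lebesgue volume $1$, the empty product. For the inductive step, suppose after $N-1$ steps the cell is a hyper-rectangle $\mathcal V(x,(D_i,S_i)_{i=1}^{N-1})$ with volume $\prod_{i=1}^{N-1}\bar S_i$, and let $(a,b)$ be its side along coordinate $D_N$. The step-$N$ split replaces this side by either $(a, a+(b-a)S_N)$ or $(a+(b-a)S_N, b)$ — whichever of the two sub-intervals contains $x_{D_N}$ — while leaving the other $d-1$ sides untouched. By construction $\bar S_N$ is precisely the ratio of the new side length to the old one ($S_N$ in the first case, $1-S_N$ in the second), so the new cell has volume $\bar S_N$ times the old volume, namely $\prod_{i=1}^{N}\bar S_i$. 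This closes the induction. The only point requiring a little care is to observe that the new cell is still an axis-aligned hyper-rectangle, so that its Lebesgue volume is genuinely the product of side lengths; this is immediate from the recursive description, since only one coordinate's side is modified at each step.

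For the distributional statement, the key observation is that the random variables $(D_i,S_i)_{i=1}^N$ are all independent (of each other and of $\mathcal D_n$), and that $\bar S_i$ is obtained from $S_i$ by the rule $\bar S_i = S_i$ or $\bar S_i = 1-S_i$ according to an event determined by the position of $x$ relative to the split location. Here I would condition on the coordinate choices $(D_i)_{i=1}^N$ and, more delicately, note that the ``side choice'' at step $i$ — i.e. whether $x_{D_i}$ falls left or right of the cut — depends only on $S_i$ itself together with the current cell, which in turn is a function of $(D_j,S_j)_{j<i}$. The hypothesis $S_i\sim 1-S_i$ says each $S_i$ is symmetric about $1/2$; since the symmetry $s\mapsto 1-s$ is measure-preserving for the law of $S_i$ and the left/right selection is a deterministic (measurable) function of $S_i$ given the past, replacing $S_i$ by $1-S_i$ leaves the joint law of the process unchanged while swapping the two branches of the rule defining $\bar S_i$. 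A clean way to phrase this: define $U_i := \mathbf 1\{x_{D_i}\le a_i + (b_i-a_i)S_i\}$ where $(a_i,b_i)$ is the current side, so $\bar S_i = U_i S_i + (1-U_i)(1-S_i)$; then argue by induction on $i$ that the vector $(\bar S_1,\dots,\bar S_i)$ has the same law as $(S_1,\dots,S_i)$, using at each step that, conditionally on the past and on $D_i$, the map $S_i\mapsto \bar S_i$ pushes the (symmetric) law of $S_i$ forward to itself.

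The main obstacle — and the place where I would be most careful writing the full proof — is precisely this last conditional-symmetry argument: one must not conflate ``$\bar S_i$ has the same marginal law as $S_i$'' with the stronger joint statement, because the selection indicator $U_i$ is itself a function of $S_i$ and of the past $(D_j,S_j)_{j<i}$, so the coupling is not a naive one. The rigorous route is to build the process step by step and check that the conditional distribution of $\bar S_i$ given $(\bar S_1,\dots,\bar S_{i-1}, D_1,\dots,D_i)$ equals the (unconditional, by independence) law of $S_i$; summing over $D_i$ and taking products over $i$ then yields $\prod_{i=1}^N \bar S_i \sim \prod_{i=1}^N S_i$ as claimed. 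Everything else is bookkeeping.
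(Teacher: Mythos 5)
Your proof of the deterministic volume identity is correct and is essentially the paper's argument in inductive form: the paper writes each side length as $h_k=\prod_{i=1}^N \bar S_i^{\,\ind_{D_i=k}}$ and multiplies over $k$, using that exactly one coordinate is split at each step; your induction performs the same bookkeeping.

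The distributional statement is where there is a genuine gap. Your plan hinges on checking that, conditionally on $(D_j,S_j)_{j<i}$ and $D_i$, the reduction $\bar S_i$ has the same law as $S_i$. That conditional claim is false in general. Writing $t\in[0,1]$ for the relative position of $x_{D_i}$ in the current side (a function of the past), you have $\bar S_i=S_i\ind_{\{S_i>t\}}+(1-S_i)\ind_{\{S_i\le t\}}$. The flip $s\mapsto 1-s$ preserves the law of $S_i$, but it sends the event $\{S_i>t\}$ to $\{S_i<1-t\}$, not to $\{S_i\le t\}$, so it does not exchange the two branches unless $t\in\{0,1\}$; and even at $t=1/2$ one gets $\bar S_i=\max(S_i,1-S_i)$, which is invariant under the flip but not distributed as $S_i$. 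Concretely, in $d=1$ with $x=1/2$, $N=1$ and $S_1$ uniform on $(0,1)$, the retained cell has length $\max(S_1,1-S_1)\sim\mathcal U(1/2,1)$: the cell containing a fixed interior point is size-biased upwards, so its volume stochastically dominates $\prod_i S_i$ rather than matching it in law. The identity does hold when the left/right selection is degenerate, e.g.\ when $x$ is a corner of $[0,1]^d$ (then $\bar S_i=S_i$ identically) or for centered trees with $S_i\equiv 1/2$, but not for a generic $x$. For what it is worth, the paper's own proof establishes only the product formula for the volume and offers no argument for the distributional claim, so your instinct that this is the delicate point is well placed; the conditional-symmetry route you sketch, however, cannot close it as written, and any complete argument would have to exploit additional structure (such as the location of $x$) rather than the symmetry of $S_i$ alone.
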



Note that for centered or uniform random trees, the distributions of the $S_i$'s are indeed symmetric around $1/2$. Furthermore, for centered trees, $S_i=1/2$ almost surely, the Lebesgue volume of the cell containing $x$ after $N$ steps is equal to $1/2^N$. 

It is worth also noticing that actually, the formulas of Proposition \ref{prop_leb_vol_inv} are valid even if the random variables $D_i$ and $S_i$ depend on the dataset. Thus, the Lebesgue volume of the cell containing $x$ is independent from the direction choices as soon as the random vectors $(D_i)_{i=1}^N$ and $(S_i)_{i=1}^N$ are independent from each other, but not necessarily from the dataset. 

\subsection{Uniform random trees}\label{subsec_URT}

Let us first provide some deviation bounds for the diameter and volume of the localizing map built with uniform random trees.

\begin{proposition}\label{prop:diamURT2}
Consider  that $S_i=U_i$ are independent and uniformly distributed over $(0,1)$ and that $D_i$ are independent from each other and from the $U_i$'s and uniformly distributed over $\left\{ 1,\ldots,d\right\}$. Then, for $\mathcal V (x)= \mathcal V(x, (D_i,S_i)_{i=1}^N)$ and for any $\beta \geq 0$,
\begin{equation*}\label{eq_dev_diam_URT}
    \PP(\diam (\mathcal{V}(x))\geq \sqrt{d}e^{-N/d+N\beta}) \leq de^{-Nd\beta^2/4}.
\end{equation*} 
Moreover, for all $\beta \in (0,2/d)$ we have,
\begin{equation*}
    \PP(\diam (\mathcal{V}(x))\leq \sqrt{d}e^{-N/d - N\beta}) \leq de^{-Nd\beta^2/8}.
\end{equation*} 
\end{proposition}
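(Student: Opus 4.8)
The plan is to reduce both tail bounds to concentration inequalities for the side lengths and the Lebesgue volume of the random cell, which are products of (essentially) independent uniform reduction factors, and then run Chernoff bounds.

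\textbf{Geometric reduction.} Since $\mathcal V(x)$ is a hyper-rectangle, denote its side lengths by $h_k=\prod_{i\,:\,D_i=k}\bar S_i$, $k=1,\dots,d$, so that $\diam(\mathcal V(x))^2=\sum_{k=1}^d h_k^2$. This gives at once
\[
\max_{1\le k\le d}h_k\ \le\ \diam(\mathcal V(x))\ \le\ \sqrt d\,\max_{1\le k\le d}h_k,\qquad \diam(\mathcal V(x))\ \ge\ \sqrt d\Big(\textstyle\prod_{k=1}^d h_k\Big)^{1/d}=\sqrt d\,\lambda(\mathcal V(x))^{1/d},
\]
the last bound being AM--GM applied to $h_1^2,\dots,h_d^2$. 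The middle inequality drives the upper tail, the AM--GM bound the lower tail.

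\textbf{Upper tail.} By the reduction above and a union bound, $\PP(\diam(\mathcal V(x))\ge\sqrt d\,e^{-N/d+N\beta})\le\sum_{k=1}^d\PP(h_k\ge e^{-N/d+N\beta})$; the cases $\beta=0$ and $\beta\ge 1/d$ are trivial (each summand is $0$, or $\le\PP(N_k=0)\le e^{-N/d}$ at the boundary), so assume $0<d\beta<1$. Fix $k$. Along coordinate $k$ the cell evolves by the one-dimensional stick-breaking that keeps the subinterval containing $x_k$, performed at the $N_k:=\#\{i\le N:D_i=k\}\sim\mathrm{Bin}(N,1/d)$ steps with $D_i=k$ (independent of the split heights). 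By the one-dimensional case of Proposition~\ref{prop_leb_vol_inv}, conditionally on $N_k=m$ the length $h_k$ has the law of a product of $m$ i.i.d.\ uniforms, hence $\EE[h_k^\theta\mid N_k=m]=(1+\theta)^{-m}$ for $\theta\ge0$ and, by the binomial generating function,
\[
\EE[h_k^\theta]=\EE\big[(1+\theta)^{-N_k}\big]=\Big(1-\tfrac{\theta}{d(1+\theta)}\Big)^N\ \le\ \exp\!\Big(-\tfrac{N\theta}{d(1+\theta)}\Big).
\]
A Chernoff bound gives, for $\theta>0$, $\PP(h_k\ge e^{-N/d+N\beta})\le e^{\theta N(1/d-\beta)}\EE[h_k^\theta]\le\exp\!\big(\tfrac{N\theta}{d}[(1-d\beta)-\tfrac{1}{1+\theta}]\big)$, and the minimizing choice $\theta=(1-d\beta)^{-1/2}-1>0$ turns the exponent into $-\tfrac Nd(1-\sqrt{1-d\beta})^2$. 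Since $1-u/2\ge\sqrt{1-u}$ on $[0,1]$, i.e.\ $(1-\sqrt{1-u})^2\ge u^2/4$, this is at most $e^{-Nd\beta^2/4}$, and summing over $k$ yields the first inequality.

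\textbf{Lower tail.} By AM--GM, $\{\diam(\mathcal V(x))\le\sqrt d\,e^{-N/d-N\beta}\}\subseteq\{\lambda(\mathcal V(x))\le e^{-N(1+d\beta)}\}=\{-\log\lambda(\mathcal V(x))\ge N(1+d\beta)\}$. Because $U_i\sim 1-U_i$, Proposition~\ref{prop_leb_vol_inv} gives $-\log\lambda(\mathcal V(x))\overset{d}{=}\sum_{i=1}^N(-\log U_i)$, a sum of $N$ i.i.d.\ $\mathrm{Exp}(1)$ variables, i.e.\ a $\mathrm{Gamma}(N,1)$ variable; its Chernoff bound, optimized at $\theta=d\beta/(1+d\beta)\in(0,1)$, gives $\PP(-\log\lambda(\mathcal V(x))\ge N(1+d\beta))\le\exp(-N[d\beta-\log(1+d\beta)])$. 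The elementary inequality $u-\log(1+u)\ge u^2/8$ for $u\in[0,2]$ (the map $u\mapsto u-\log(1+u)-u^2/8$ vanishes at $0$ and has derivative $u(3-u)/\{4(1+u)\}\ge0$ on $[0,3]$), applied with $u=d\beta<2$, gives $d\beta-\log(1+d\beta)\ge(d\beta)^2/8\ge d\beta^2/8$, whence $\PP(\diam(\mathcal V(x))\le\sqrt d\,e^{-N/d-N\beta})\le e^{-Nd\beta^2/8}\le d\,e^{-Nd\beta^2/8}$.

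\textbf{Where the work is.} The delicate point is that the factors $\bar S_i$ are not i.i.d.: which half of a split is kept depends on the random position of $x$ inside the current cell, itself produced by the earlier splits. Proposition~\ref{prop_leb_vol_inv} is precisely what neutralizes this dependence, both globally for $\lambda(\mathcal V(x))$ and, along a fixed coordinate, for the side length $h_k$; everything else is a routine optimization of Chernoff exponents. (In the lower tail one may also bypass the distributional identity via the previsible moment bound $\EE[\bar S_i^{-\theta}\mid\mathcal F_{i-1}]\le(1-\theta)^{-1}$, valid for every position of $x$ and $\theta\in(0,1)$, which already yields $\EE[\lambda(\mathcal V(x))^{-\theta}]\le(1-\theta)^{-N}$.)
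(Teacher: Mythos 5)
Your proposal is correct, and its two halves compare differently with the paper. The upper tail is essentially the paper's argument: the same reduction $\PP(\diam(\mathcal V(x))\ge\sqrt d\,e^{-N/d+N\beta})\le d\,\PP\bigl(h_1\ge e^{-N/d+N\beta}\bigr)$, the same moment $\EE[h_1^{\theta}]=\bigl(1-\tfrac{\theta}{d(1+\theta)}\bigr)^{N}$ (the paper computes it as $\EE[U_1^{\theta B_1^{(1)}}]^N$ rather than through $N_1\sim\mathrm{Bin}(N,1/d)$), and a Chernoff optimization; you optimize exactly and invoke $(1-\sqrt{1-u})^2\ge u^2/4$, while the paper bounds $\log Q(\lambda)$ crudely and plugs in $\lambda=d\beta/2$, which has the minor advantage of covering all $\beta\ge 0$ in one stroke, whereas your exact optimum forces the side cases $\beta=0$ and $\beta\ge 1/d$, which you do handle. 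The lower tail is where you genuinely diverge: the paper stays coordinatewise, using $\{\diam\le t\}\subseteq\{\min_k h_k\le t/\sqrt d\}$, a union bound, and negative moments $\EE[h_1^{-\lambda}]$ with $\lambda=d\beta/4$ (this is where its restriction $\beta<2/d$ enters, to keep $\lambda<1/2$); you instead pass to the volume via AM--GM, use the representation of $-\log\lambda(\mathcal V(x))$ as a sum of $N$ i.i.d.\ exponentials from Proposition~\ref{prop_leb_vol_inv}, and the inequality $u-\log(1+u)\ge u^2/8$ on $[0,2]$ (your use of $\beta<2/d$). Your route avoids the union bound and actually yields the stronger exponent $e^{-Nd^2\beta^2/8}$ before relaxing to the stated bound, and your closing remark --- that the previsible bound $\EE[\bar S_i^{-\theta}\mid \mathcal F_{i-1}]\le(1-\theta)^{-1}$ already gives $\EE[\lambda(\mathcal V(x))^{-\theta}]\le(1-\theta)^{-N}$ --- is a genuine plus: it makes the lower tail independent of the distributional exchange $\bar S_i\leftrightarrow S_i$ on which both Proposition~\ref{prop_leb_vol_inv} and the paper's side-length representation $h_k=\prod_i U_i^{B_i^{(k)}}$ rest, whereas for the upper tail you, like the paper, still need that exchange (through the claim that $h_k$ is conditionally a product of i.i.d.\ uniforms at the coordinate-$k$ steps).
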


\begin{proposition}\label{prop:diam_vol_URT2}
Consider  that $S_i=U_i$ are independent and uniformly distributed over $(0,1)$ and that $D_i$ are independent from each other and from the $U_i$'s and uniformly distributed over $\left\{ 1,\ldots,d\right\}$. Then, for $\mathcal V (x)= \mathcal V(x, (D_i,S_i)_{i=1}^N)$ and for any $\alpha >1$,
\begin{equation*}\label{eq_dev_vol_URT}
     \PP(\lambda(\mathcal{V}(x))\leq e^{-\alpha N}) \leq \left(\alpha e^{1-\alpha}\right)^N.
\end{equation*}
In addition, for any $\alpha \in (0,1)$,
\begin{equation*}
     \PP(\lambda(\mathcal{V}(x)) \geq e^{-\alpha N}) \leq \left(\alpha e^{1-\alpha}\right)^N.
\end{equation*}
\end{proposition}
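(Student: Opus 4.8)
The plan is to reduce both inequalities to a one–dimensional large–deviations estimate for a sum of i.i.d.\ exponential random variables. First, since $S_i=U_i\sim\mathcal U(0,1)$ is symmetric around $1/2$, Proposition~\ref{prop_leb_vol_inv} gives the distributional identity
\[
\lambda(\mathcal V(x,(D_i,S_i)_{i=1}^N)) \;\sim\; \prod_{i=1}^N U_i .
\]
Taking logarithms, write $T_N := -\sum_{i=1}^N \log U_i$; the variables $-\log U_i$ are i.i.d.\ $\mathrm{Exp}(1)$, so $T_N$ is a $\Gamma(N,1)$ variable. The two events to be bounded become $\{\lambda(\mathcal V(x))\le e^{-\alpha N}\}=\{T_N\ge \alpha N\}$ and $\{\lambda(\mathcal V(x))\ge e^{-\alpha N}\}=\{T_N\le \alpha N\}$.

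For the first bound, fix $\alpha>1$ and apply the Chernoff bound: for any $\theta\in(0,1)$,
\[
\PP(T_N\ge \alpha N)\;\le\; e^{-\theta\alpha N}\,\EE\!\big[e^{\theta T_N}\big]\;=\;\Big(\tfrac{e^{-\theta\alpha}}{1-\theta}\Big)^{N},
\]
using $\EE[U_i^{-\theta}]=\int_0^1 u^{-\theta}\,\diff u=(1-\theta)^{-1}$. Minimising $-\theta\alpha-\log(1-\theta)$ over $\theta\in(0,1)$ gives the optimiser $\theta=1-1/\alpha$ (admissible since $\alpha>1$), and substituting yields $e^{-\theta\alpha}/(1-\theta)=\alpha e^{1-\alpha}$, hence $\PP(T_N\ge\alpha N)\le(\alpha e^{1-\alpha})^N$.

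For the second bound, fix $\alpha\in(0,1)$ and use the lower-tail Chernoff bound: for any $\theta>0$,
\[
\PP(T_N\le \alpha N)\;\le\; e^{\theta\alpha N}\,\EE\!\big[e^{-\theta T_N}\big]\;=\;\Big(\tfrac{e^{\theta\alpha}}{1+\theta}\Big)^{N},
\]
using $\EE[U_i^{\theta}]=(1+\theta)^{-1}$. Minimising $\theta\alpha-\log(1+\theta)$ over $\theta>0$ gives $\theta=1/\alpha-1$ (admissible since $\alpha<1$), which again produces $e^{\theta\alpha}/(1+\theta)=\alpha e^{1-\alpha}$ and therefore $\PP(T_N\le\alpha N)\le(\alpha e^{1-\alpha})^N$. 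Combining the two estimates with the distributional identity proves the proposition.

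I do not anticipate a genuine obstacle here: once Proposition~\ref{prop_leb_vol_inv} supplies the product representation, everything reduces to the standard Cramér transform of the exponential distribution, and the only mild care needed is checking that the optimal tilting parameters lie in the admissible ranges ($\theta<1$ for the upper tail, $\theta>0$ for the lower tail), which is exactly where the hypotheses $\alpha>1$ and $\alpha\in(0,1)$ are used. One could alternatively invoke a ready-made Gamma/chi-square concentration inequality, but carrying out the two-line Chernoff optimisation directly gives precisely the stated constant $\alpha e^{1-\alpha}$.
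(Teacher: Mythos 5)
Your proof is correct and follows essentially the same route as the paper: both reduce to the product representation $\lambda(\mathcal V(x))\sim\prod_{i=1}^N U_i$ from Proposition~\ref{prop_leb_vol_inv} and apply a Chernoff/Markov bound with the moments $\EE[U_1^{-\theta}]=(1-\theta)^{-1}$ and $\EE[U_1^{\theta}]=(1+\theta)^{-1}$, optimised at $\theta=1-1/\alpha$ and $\theta=1/\alpha-1$ respectively. Phrasing it through $T_N=-\sum_i\log U_i\sim\Gamma(N,1)$ is only a cosmetic reparametrisation of the paper's computation.
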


\begin{corollary}\label{cor_diamvolURT}
When the number of splits goes to infinity, it holds that, almost surely, there exists $n_0\geq 1$ such that for all $N\geq n_0$,
$$ \sqrt d e^{-N/d-4\sqrt{N\log(N)/d}} \leq \diam(\mathcal{V}(x)) \leq \sqrt d e^{-N/d+2\sqrt{2N\log(N)/d}}$$
and
$$ e^{-N- 2\sqrt{N\log(N)}} \leq \lambda(\mathcal{V}(x))) \leq e^{-N + 2\sqrt{N\log(N)}}.$$
Moreover, if we denote the normalized diameter $\diam^{\#}(\mathcal{V}(x)) := \diam(\mathcal{V}(x)))/\sqrt{d}$, we obtain for $N$ large enough, 
\begin{equation*}
  e^{-2\sqrt{N\log(N)}(1+2\sqrt{d})}   \leq \dfrac{\diam^{\#}(\mathcal{V}(x))^d}{\lambda(\mathcal{V}(x)))} \leq e^{2\sqrt{N\log(N)}(1+\sqrt{2d})}. 
\end{equation*}
\end{corollary}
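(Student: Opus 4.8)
The plan is to obtain the four one-sided bounds --- two on $\diam(\mathcal V(x))$ and two on $\lambda(\mathcal V(x))$ --- from the tail estimates of Propositions~\ref{prop:diamURT2} and~\ref{prop:diam_vol_URT2}, each applied along a well-chosen sequence of parameters decaying like $\sqrt{\log N/N}$, and then to invoke the first Borel--Cantelli lemma. The closing two-sided bound on $\diam^{\#}(\mathcal V(x))^d/\lambda(\mathcal V(x))$ then follows by purely algebraic combination of the four, using $\diam^{\#}(\mathcal V(x)) = \diam(\mathcal V(x))/\sqrt d$.

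For the diameter, I would apply the first inequality of Proposition~\ref{prop:diamURT2} with $\beta = \beta_N := \tfrac{2\sqrt 2}{\sqrt d}\sqrt{\log N/N}$, so that $N\beta_N = 2\sqrt{2N\log(N)/d}$ and $Nd\beta_N^2/4 = 2\log N$; hence the event $\{\diam(\mathcal V(x)) \geq \sqrt d\,e^{-N/d + 2\sqrt{2N\log(N)/d}}\}$ has probability at most $d N^{-2}$, which is summable in $N$. For the matching lower bound I would use the second inequality of Proposition~\ref{prop:diamURT2} with $\beta = \beta_N := \tfrac{4}{\sqrt d}\sqrt{\log N/N}$; this sequence tends to $0$, so it eventually lies in the admissible range $(0,2/d)$, and it gives $Nd\beta_N^2/8 = 2\log N$, so the event $\{\diam(\mathcal V(x)) \leq \sqrt d\,e^{-N/d - 4\sqrt{N\log(N)/d}}\}$ again has probability at most $dN^{-2}$.

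For the volume I would apply Proposition~\ref{prop:diam_vol_URT2} with $\alpha = \alpha_N := 1 + 2\sqrt{\log N/N}$ (which exceeds $1$) for the lower bound and with $\alpha = \alpha_N := 1 - 2\sqrt{\log N/N}$ (which lies in $(0,1)$ for $N$ large) for the upper bound; in both cases $\alpha_N N = N \pm 2\sqrt{N\log(N)}$. The only point needing a small computation is that the resulting probabilities $(\alpha_N e^{1-\alpha_N})^N$ are summable: writing $\alpha_N = 1\pm\eps_N$ with $\eps_N = 2\sqrt{\log N/N}\to 0$, a Taylor expansion gives $\log\!\big((1\pm\eps_N)e^{\mp\eps_N}\big) = -\tfrac{1}{2}\eps_N^2(1+\oh(1))$ (the cubic remainder being of the favourable sign when $\alpha_N<1$, yielding the clean bound $-\tfrac12\eps_N^2$ there), so that $(\alpha_N e^{1-\alpha_N})^N \leq e^{-\frac12 N\eps_N^2(1+\oh(1))} = N^{-2+\oh(1)}$, which is eventually dominated by $N^{-3/2}$ and hence summable.

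With summability established in all four cases, the Borel--Cantelli lemma ensures that, almost surely, each of the four bad events occurs for only finitely many $N$; letting $n_0$ be the (random) index beyond which none of them occurs gives simultaneously, for all $N\geq n_0$, the two displayed chains of inequalities for $\diam(\mathcal V(x))$ and $\lambda(\mathcal V(x))$. Finally, dividing by $\sqrt d$, raising the diameter bounds to the power $d$, and dividing by the volume bounds yields the last display; here one only needs to simplify the exponents, e.g.\ $4d\sqrt{N\log(N)/d} = 4\sqrt d\,\sqrt{N\log(N)}$ and $2d\sqrt{2N\log(N)/d} = 2\sqrt{2d}\,\sqrt{N\log(N)}$, to recognize $e^{-2\sqrt{N\log(N)}(1+2\sqrt d)}$ and $e^{2\sqrt{N\log(N)}(1+\sqrt{2d})}$ as the lower and upper bounds on $\diam^{\#}(\mathcal V(x))^d/\lambda(\mathcal V(x))$. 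There is no real obstacle beyond this bookkeeping; the only delicate point is the summability estimate for the $(\alpha_N e^{1-\alpha_N})^N$ tails just discussed.
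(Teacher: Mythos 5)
Your proposal is correct and follows essentially the same route as the paper: the same choices $\beta_N=2\sqrt{2\log(N)/(dN)}$ and $\tilde\beta_N=4\sqrt{\log(N)/(dN)}$ for the diameter, the same $\alpha_N=1\pm 2\sqrt{\log(N)/N}$ with the Taylor expansion giving tails of order $N^{-2+\oh(1)}$ for the volume, followed by Borel--Cantelli and the same algebraic combination for the ratio $\diam^{\#}(\mathcal V(x))^d/\lambda(\mathcal V(x))$. No gaps.
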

The previous results are valid in any dimension $d\geq 1$, but it is worth noting that in dimension one, the (normalized) diameter of any cell is always equal to its Lebesgue volume, $\diam(\mathcal{V}(x)))=\diam^{\#}(\mathcal{V}(x))=\lambda(\mathcal{V}(x)))$.

We deduce  the following high probability upper bound on the pointwise error of the resulting local map regression estimator. 

\begin{corollary}\label{cor_URT}
   Let $n\geq 1$, $d\geq 1$, $x\in S_X$ and $N=d\log(n)/(d+2)$. Suppose that $\mathcal V (x)= \mathcal V(x, (D_i,S_i)_{i=1}^N)$ is obtained from a uniform random tree as described in Proposition \ref{prop:diamURT2}. Under \ref{cond:epsilon} and \ref{cond:density_X}, suppose that $g$ is Lipschitz on $S_X$. Then there exists $C>0$, that only depends on the parameters of the problem but not on $n$, such that with probability $1$, there is $n_0$ such that for all $ n \geq n_0$,
\[
 | \hat g_{\mathcal V}(x) - g(x)| \leq
 C n^{-1/(d+2)}\sqrt{\log(n)} \, e^{2\sqrt{\log(n)\log\log (n)}}.
\]
\end{corollary}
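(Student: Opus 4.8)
The plan is to combine the pointwise error bound of Theorem~\ref{th2:general} with the almost sure volume and diameter estimates of Corollary~\ref{cor_diamvolURT}, specialized to the scale $N = d\log(n)/(d+2)$. First I would invoke Theorem~\ref{th2:general}: since the local map of a uniform random tree with $N$ splits is valued in hyper-rectangles contained in $[0,1]^d$, and hyper-rectangles form a VC class with dimension $v=2d$, the map is VC; moreover, because the tree construction is independent of the data $\mathcal D_n$, the minimal mass assumption \ref{cond:density_X} is inherited (a density lower bounded away from zero on $[0,1]^d$ gives it), so for $n$ large enough the $(\delta,n)$-large condition holds (one should take $\delta = \delta_n$ shrinking polynomially, or rather apply the bound along a sequence and use Borel--Cantelli, see below). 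This yields, with high probability,
\[
|\hat g_{\mathcal V}(x) - g(x)| \lesssim \sqrt{\frac{\log((n+1)^{2d}/\delta)}{n\,\lambda(\mathcal V(x))}} + L\,\diam(\mathcal V(x)).
\]

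Next I would substitute the almost sure bounds from Corollary~\ref{cor_diamvolURT}. With $N = d\log(n)/(d+2)$ we have $e^{-N} = n^{-d/(d+2)}$ and $e^{-N/d} = n^{-1/(d+2)}$, so Corollary~\ref{cor_diamvolURT} gives, for $n$ large enough, $\lambda(\mathcal V(x)) \geq n^{-d/(d+2)} e^{-2\sqrt{N\log N}}$ and $\diam(\mathcal V(x)) \leq \sqrt d\, n^{-1/(d+2)} e^{2\sqrt{2N\log N/d}}$. Plugging the volume lower bound into the variance term gives
\[
\sqrt{\frac{\log((n+1)^{2d}/\delta)}{n\cdot n^{-d/(d+2)} e^{-2\sqrt{N\log N}}}} \lesssim n^{-1/(d+2)}\sqrt{\log n}\; e^{\sqrt{N\log N}},
\]
while the bias term is already of the form $n^{-1/(d+2)} e^{2\sqrt{2N\log N/d}}$. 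Since $N \leq \log n$, both exponential factors are bounded by $e^{C'\sqrt{\log n \log\log n}}$ for a suitable constant, and $\sqrt{\log N} \leq \sqrt{\log\log n}$, so after absorbing constants the two terms combine to $C\, n^{-1/(d+2)}\sqrt{\log n}\, e^{2\sqrt{\log n \log\log n}}$, which is the claimed bound.

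The slightly delicate point is the interplay between the ``with probability $1-3\delta$'' statement of Theorem~\ref{th2:general} and the ``almost surely, for $n$ large enough'' conclusion. The clean route is: fix a confidence level, say apply Theorem~\ref{th2:general} with $\delta = \delta_n = n^{-2}$ (or any summable sequence); the log factor then only contributes an extra $\log n$, harmlessly absorbed into the $\sqrt{\log n}$ already present. The event where the Theorem~\ref{th2:general} bound fails has probability $\leq 3n^{-2}$, summable, so by Borel--Cantelli it holds for all $n$ large enough almost surely; intersecting with the almost sure event from Corollary~\ref{cor_diamvolURT} (which is itself a ``for all $N \geq n_0$'' statement, hence transfers directly to ``for all $n \geq n_1$'' via the monotone relabelling $N = d\log(n)/(d+2)$) gives a single almost sure event on which the conclusion holds for all $n \geq n_0$. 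One must also check that the hypotheses of Theorem~\ref{th2:general} — in particular the $(\delta_n,n)$-large condition $n\,\mathbb P(\mathcal V(x)) \geq 8\log(4(2n+1)^{2d}/\delta_n)$ — are eventually satisfied: using \ref{cond:density_X}, $n\,\mathbb P(\mathcal V(x)) \gtrsim n\,\lambda(\mathcal V(x)) \gtrsim n^{2/(d+2)} e^{-2\sqrt{N\log N}}$, which grows faster than $\log n$, so this is fine for $n$ large. The main obstacle is purely bookkeeping: tracking the exponential correction factors through the substitution and verifying they all fit under the single envelope $e^{2\sqrt{\log n \log\log n}}$ with the constant $2$ as stated (the bias term's exponent $2\sqrt{2N\log N/d} = 2\sqrt{2\log(n)\log\log n/(d+2)} \leq 2\sqrt{\log n \log\log n}$ for $d \geq 0$, and the variance term's $\sqrt{N\log N} = \sqrt{d\log(n)\log\log n/(d+2)} \leq \sqrt{\log n \log\log n}$, both safely below the envelope), so no genuine difficulty arises beyond careful constant-chasing.
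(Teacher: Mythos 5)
Your proposal is correct and follows essentially the same route as the paper's proof: apply Theorem~\ref{th2:general} pointwise with a summable choice of $\delta$ (the paper takes $\delta=(n+1)^{-2}$), verify the $(\delta,n)$-large condition via \ref{cond:density_X} together with the almost sure volume lower bound, invoke Borel--Cantelli, and then substitute the diameter and volume estimates of Corollary~\ref{cor_diamvolURT} at $N=d\log(n)/(d+2)$, absorbing the exponential corrections under the envelope $e^{2\sqrt{\log(n)\log\log(n)}}$ exactly as you do. Your constant-chasing (e.g.\ $2\sqrt{2N\log N/d}\leq 2\sqrt{\log n\log\log n}$ and $\sqrt{N\log N}\leq\sqrt{\log n\log\log n}$) matches the paper's treatment via $c_d=\max(\sqrt{d/(d+2)},\sqrt{8/(d+2)})\leq 2$.
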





From Corollary \ref{cor_diamvolURT},  we see that the local regression map estimator based on the uniform random partition achieves with probability tending to one an estimation error on a fixed point of the covariates space that is close to - but \textit{a priori} greater than - the optimal one, in the sense that for any $\varepsilon>0$, the estimation error is negligible compared to $n^{-1/(d+2)+\varepsilon}$ for $n$ sufficiently large.

The following negative result establishes that uniform trees are not shape-regular, thus indicating that the optimal rate of convergence may indeed not be achieved by the local estimator based on a uniform random tree.

\begin{proposition}\label{uniform tree not regular}
Uniform trees are not $\beta$-SR, i.e., for any \( N \geq d \) and any hyper-rectangle $\mathcal V (x)= \mathcal V(x, (D_i,S_i)_{i=1}^N)$ obtained from a uniform random tree as described in Proposition \ref{prop:diamURT2}, we have, with probability at least $1/11$,
$$\dfrac{h_+(\mathcal V (x) )}{h_-(\mathcal V (x) )} \geq e^{\sqrt{N/d}}.$$
\end{proposition}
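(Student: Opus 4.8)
The plan is to exploit the Lebesgue volume invariance (Proposition \ref{prop_leb_vol_inv}) together with a careful tracking of the side lengths along a single coordinate. The key observation is that to lower bound $h_+(\mathcal V(x))/h_-(\mathcal V(x))$ it suffices to control the side length in \emph{one} well-chosen coordinate and compare it to the side length in another. Since the $D_i$'s are uniform over $\{1,\dots,d\}$ and independent of the $S_i$'s, with probability one there is at least one coordinate, say coordinate $1$, that is never selected during the first $N$ splits --- wait, that has probability $(1-1/d)^N$ which is too small in general. Instead, I would argue as follows: consider the coordinate $k^\star$ that receives the \emph{fewest} splits among the first $N$; by pigeonhole it receives at most $N/d$ splits, while some coordinate receives at least $N/d$ splits. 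More usefully, I would directly estimate $h_k(\mathcal V(x))$ for a fixed coordinate $k$. Writing $h_k(\mathcal V(x)) = \prod_{i : D_i = k} \bar S_i$ (an empty product being $1$), the log-side-length is $\sum_{i: D_i = k} \log \bar S_i$.

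First I would set up the decomposition: $\log h_+(\mathcal V(x)) - \log h_-(\mathcal V(x)) \geq \log h_{k_1}(\mathcal V(x)) - \log h_{k_2}(\mathcal V(x))$ for \emph{any} pair $k_1, k_2$. I would like to choose $k_2$ to be a coordinate with few splits (so $h_{k_2}$ is close to $1$) and then show $h_{k_1}$ is small for \emph{some} $k_1$. Alternatively, and more robustly, note that $\prod_{k=1}^d h_k(\mathcal V(x)) = \lambda(\mathcal V(x)) = \prod_{i=1}^N \bar S_i$, so $\sum_k \log h_k(\mathcal V(x)) = \sum_{i=1}^N \log \bar S_i$. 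The minimum over $k$ of $\log h_k$ is at most the average, which is $\frac1d \sum_{i=1}^N \log \bar S_i$. Meanwhile the maximum over $k$ of $\log h_k$ is at least... this needs that \emph{some} coordinate stays large, which is not automatic. So I would instead fix attention on the coordinate $k^\star$ achieving the minimum, show $\log h_{k^\star} \leq \frac1d\sum_{i=1}^N \log\bar S_i$, and separately exhibit a coordinate $k^{\star\star}$ with $\log h_{k^{\star\star}} \geq -c\sqrt{N/d}$ or so, using that the least-split coordinate has $O(\text{something})$ splits --- actually the key quantitative input is a lower tail bound for $\sum_{i=1}^N \log(1/\bar S_i)$, i.e. showing $\lambda(\mathcal V(x))$ is not too small, combined with an upper tail bound showing $\min_k h_k$ is genuinely small.

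The cleanest route I expect: by Proposition \ref{prop_leb_vol_inv} (symmetric case, which applies since $U_i \sim 1 - U_i$), $\lambda(\mathcal V(x)) \stackrel{d}{=} \prod_{i=1}^N U_i$, and $-\log \prod U_i = \sum_{i=1}^N E_i$ where $E_i$ are i.i.d. standard exponentials. So $\sum_k \log(1/h_k) = \sum_{i=1}^N E_i$, which concentrates around $N$. Hence $\max_k \log(1/h_k) \geq \frac1d \sum_{i=1}^N E_i \approx N/d$, i.e. $h_- = \min_k h_k \leq e^{-\frac1d\sum_i E_i}$. For the denominator bound I need $\max_k h_k = h_+$ to be \emph{not too small}, equivalently $\min_k \log(1/h_k)$ to be small; here I would condition on the split-count vector $(M_1,\dots,M_d)$ where $M_k = \#\{i : D_i = k\}$, which is Multinomial$(N; 1/d,\dots,1/d)$, and pick the coordinate with the smallest $M_k$, which is $\leq N/d$; for that coordinate, $\log(1/h_k) = \sum_{i: D_i = k} E_i'$ is a sum of at most $N/d$ exponentials, so with good probability it is $O(\sqrt{N/d})$ or at worst $O(N/d)$ --- I only need it to be $o(N/d)$ with constant probability, or even just noticeably smaller than $\frac1d \sum E_i$. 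Then $\log(h_+/h_-) \geq \frac1d\sum_{i=1}^N E_i - (\text{small term}) \gtrsim N/d - c\sqrt{N/d} \geq \sqrt{N/d}$ for $N \geq d$, on an event of probability bounded below by an absolute constant; chasing the constants (Markov/Chebyshev on the exponential sums, or explicit Gamma-distribution tail estimates) and a union bound should yield the stated $1/11$.

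\textbf{Main obstacle.} The hard part will be arranging the probability lower bound to be a clean absolute constant like $1/11$ while handling the dependence: the split-count vector $(M_1,\dots,M_d)$ and the exponential variables interact, and for small $N$ (close to $d$) the concentration of $\sum_i E_i$ around $N$ is weak, so I must use crude but honest bounds --- e.g. $\PP(\sum_{i=1}^N E_i \geq N/2) \geq$ const via a second-moment or explicit computation, and $\PP(\min_k M_k \cdot \text{(typical exponential)} \text{ is small})$ via Markov --- and then combine them so the intersection still has probability $\geq 1/11$. The inequality $N \geq d$ is exactly what makes $N/d \geq 1$ so that $\sqrt{N/d} \leq N/d$ and the gap $N/d - O(\sqrt{N/d})$ dominates $\sqrt{N/d}$; I would check the threshold $N \geq d$ suffices for the final comparison after fixing all constants, possibly needing the constant in the exponent ($1$ in $e^{\sqrt{N/d}}$) to absorb slack.
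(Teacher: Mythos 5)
Your setup is sound — writing $\log(1/h_k(\mathcal V(x)))=T_k:=\sum_{i:D_i=k}E_i$ with $E_i$ exponential, and noting $\log\bigl(h_+(\mathcal V(x))/h_-(\mathcal V(x))\bigr)=\max_k T_k-\min_k T_k$ — but the key quantitative step fails. You argue that the least-split coordinate $k^{\star\star}$ (with $M_{k^{\star\star}}\leq N/d$ splits) has $T_{k^{\star\star}}=O(\sqrt{N/d})$, or at least $o(N/d)$, with constant probability, so that $\log(h_+/h_-)\gtrsim \frac1d\sum_i E_i - c\sqrt{N/d}\approx N/d-c\sqrt{N/d}$. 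This is false once $N/d$ is large: a sum of roughly $N/d$ i.i.d.\ $\mathrm{Exp}(1)$ variables concentrates at its mean $\approx N/d$, and its probability of falling below $\epsilon N/d$ decays exponentially in $N/d$, so $T_{k^{\star\star}}$ is \emph{not} negligible compared with $\frac1d\sum_i E_i$. In fact all the $T_k$ have mean $N/d$ and standard deviation of order $\sqrt{N/d}$, so the spread $\max_k T_k-\min_k T_k$ is of order $\sqrt{(N/d)\log d}$, not $N/d-c\sqrt{N/d}$; your intermediate inequality cannot hold with probability $1/11$ for large $N$. (Even accepting it, the final comparison $N/d-c\sqrt{N/d}\geq\sqrt{N/d}$ already fails at the boundary case $N=d$.)

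The target $\sqrt{N/d}$ is, however, exactly the scale of the fluctuations, so what is missing is an \emph{anti-concentration} statement: a lower bound on the probability that the difference of two coordinates' log-lengths, $Z_{1,2}=\sum_{i=1}^N(B_i^{(1)}-B_i^{(2)})E_i$ with $B_i^{(k)}=\ind_{D_i=k}$, exceeds $\sqrt{N/d}$ in absolute value. Since $\operatorname{var}(Z_{1,2})=4N/d$, this is a constant-probability event, but Markov/Chebyshev or Gamma tail estimates — the tools you list — only give \emph{upper} bounds on deviation probabilities and cannot certify it; moreover the interaction with the multinomial split counts must be handled uniformly down to $N=d$. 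The paper's proof does precisely this: it bounds $h_+/h_-\geq \exp(|Z_{1,2}|)$ and applies the Paley–Zygmund inequality to $Z_{1,2}^2$, computing $\EE[Z_{1,2}^2]=4N/d$ and $\EE[Z_{1,2}^4]=48N(d+N-1)/d^2$ explicitly, which yields $\PP\bigl(|Z_{1,2}|\geq\sqrt{N/d}\bigr)\geq 3/32\geq 1/11$ for all $N\geq d$. Replacing your ``mean minus small term'' step by such a second/fourth-moment anti-concentration argument (or an explicit distributional computation for $T_1-T_2$) is the missing ingredient needed to complete your route.
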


While, in the above, the value $1/11$ can certainly be improved, we stress that our result implies that shape-regularity fails to happen on an event having positive probability (independent from $n$).

\subsection{Centered random trees}\label{subsec_centered}

In the case of centered random trees, the volume of the cell $\mathcal{V}(x)$ after $N$ steps is simply $  \lambda(\mathcal{V}(x)))= (1/2)^N$. The diameter of the localizing map behaves as follows.

\begin{proposition}\label{prop:diam_CRT}
Let $d\geq 2$ be an integer. Consider that $S_i=1/2 $ almost surely and that $D_i$ are independent from each other and uniformly distributed over $\left\{ 1,\ldots,d\right\}$. Then, for $\mathcal V (x)= \mathcal V(x, (D_i,S_i)_{i=1}^N)$ and for any $\alpha \in (0,1/d)$,
\begin{equation*}
    \PP(\diam(\mathcal{V}(x))\geq \sqrt{d}2^{-\alpha N}) \leq d\left(1-\frac{1-\beta}{d}\right)^N \beta^{-\alpha N},
\end{equation*}
where $\beta=(d-1)\alpha/(1-\alpha).$ Moreover, for any $\alpha \in (1/d,1)$, we have for the same $\beta$
\begin{equation*}
    \PP(\diam(\mathcal{V}(x))\leq \sqrt{d}2^{-\alpha N}) \leq d\left(1-\frac{1-\beta}{d}\right)^N \beta^{-\alpha N}.
\end{equation*}
\end{proposition}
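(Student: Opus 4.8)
The plan is to control the diameter via the side lengths: for a centered tree, $h_k(\mathcal V(x)) = 2^{-N_k}$ where $N_k$ is the number of times coordinate $k$ has been selected in the $N$ splits, so $(N_1,\ldots,N_d)$ is multinomial with parameters $N$ and $(1/d,\ldots,1/d)$. Since $\diam(\mathcal V(x))^2 = \sum_k 2^{-2N_k} \le d \cdot 2^{-2\min_k N_k}$, we have $\diam(\mathcal V(x)) \le \sqrt d \, 2^{-\min_k N_k}$, and likewise $\diam(\mathcal V(x)) \ge 2^{-\min_k N_k}$. Hence the events $\{\diam(\mathcal V(x)) \ge \sqrt d \, 2^{-\alpha N}\}$ and $\{\min_k N_k \le \alpha N\}$ satisfy the first containment, while $\{\diam(\mathcal V(x)) \le \sqrt d\, 2^{-\alpha N}\}$ is contained in $\{\min_k N_k \ge \alpha N\}$ (using $\sqrt d \ge 1$). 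So it suffices to bound $\PP(\min_k N_k \le \alpha N)$ for $\alpha < 1/d$ and $\PP(\min_k N_k \ge \alpha N)$ for $\alpha > 1/d$.

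For the first, a union bound gives $\PP(\min_k N_k \le \alpha N) \le d\, \PP(N_1 \le \alpha N)$ since each $N_k \sim \mathrm{Bin}(N,1/d)$; for the second, $\PP(\min_k N_k \ge \alpha N) = \PP(N_1 \ge \alpha N)^{?}$ is not a product because of dependence, but one can instead observe that $\{\min_k N_k \ge \alpha N\} \subset \{N_1 \ge \alpha N\}$ (in fact we even need this only for one coordinate after multiplying by $d$ to match the stated bound, or simply use monotonicity) — actually the cleanest route is $\PP(\min_k N_k \ge \alpha N) \le \PP(N_d \ge \alpha N) \le d\,\PP(N_1 \ge \alpha N)$ trivially. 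In both regimes the remaining task is a Chernoff bound on a single $\mathrm{Bin}(N,1/d)$ tail. Writing the standard exponential Markov inequality $\PP(N_1 \le \alpha N) \le \inf_{t>0} e^{t\alpha N}\, \EE[e^{-t N_1}]$ and using $\EE[e^{-tN_1}] = (1 - (1-e^{-t})/d)^N$, one optimizes over $t$; the optimizer is $e^{-t} = \frac{(d-1)\alpha}{1-\alpha}$, i.e. exactly the quantity $\beta$ in the statement, which lies in $(0,1)$ precisely when $\alpha < 1/d$. Substituting back yields $\PP(N_1 \le \alpha N) \le \big(1 - \tfrac{1-\beta}{d}\big)^N \beta^{-\alpha N}$, and the factor $d$ from the union bound gives the claimed inequality. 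The upper-tail case $\alpha > 1/d$ is symmetric: use $\PP(N_1 \ge \alpha N) \le \inf_{t>0} e^{-t\alpha N}\EE[e^{tN_1}]$ with $\EE[e^{tN_1}] = (1+(e^t-1)/d)^N$; the optimizer is now $e^{t} = \frac{(d-1)\alpha}{1-\alpha} = \beta$ (which is $>1$ when $\alpha > 1/d$), and the same algebra produces the identical closed form.

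I expect the only genuinely delicate point to be verifying that the optimizing $\beta$ collapses to the advertised expression and that the exponent simplifies to $\big(1-\frac{1-\beta}{d}\big)^N\beta^{-\alpha N}$ in both regimes; this is a short but slightly fiddly computation of setting the derivative of $t\mapsto t\alpha N + N\log\big(1-(1-e^{-t})/d\big)$ to zero and plugging back. Everything else — the reduction from the diameter to $\min_k N_k$, the multinomial distribution of the split counts for a centered tree (which follows because $D_1,\ldots,D_N$ are i.i.d.\ uniform on $\{1,\ldots,d\}$), and the union bounds — is routine. One should also note the hypothesis $d\ge 2$ is used so that $\beta = (d-1)\alpha/(1-\alpha)$ is nontrivial and the interval $(0,1/d)$ for $\alpha$ is nonempty; in $d=1$ the diameter is deterministic, consistent with the earlier remark.
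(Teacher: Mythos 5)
Your overall strategy is the same as the paper's: reduce the diameter tail to a one--coordinate tail by a union bound over the $d$ directions, note that $h_k(\mathcal V(x))=2^{-N_k}$ with $N_k\sim\mathrm{Bin}(N,1/d)$, and apply a Chernoff bound whose optimal tilt is exactly $\beta=(d-1)\alpha/(1-\alpha)$ (the paper writes this as Markov's inequality applied to $2^{\mp\lambda N_1}$ and finds the optimizer $2^{\mp\lambda_0}=\beta$, which is your $e^{\mp t}=\beta$). Your treatment of the first inequality is correct and matches the paper: $\diam(\mathcal V(x))\le\sqrt d\,\max_k h_k$ gives $\{\diam\ge\sqrt d\,2^{-\alpha N}\}\subset\bigcup_k\{N_k\le\alpha N\}$, and the lower-tail Chernoff bound with $\beta\in(0,1)$ for $\alpha<1/d$ yields the stated closed form.

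There is, however, a flawed step in your lower-tail reduction. You claim $\{\diam(\mathcal V(x))\le\sqrt d\,2^{-\alpha N}\}\subset\{\min_k N_k\ge\alpha N\}$ ``using $\sqrt d\ge1$'', but the comparison you invoke, $\diam\ge\max_k h_k=2^{-\min_k N_k}$, only gives $\min_k N_k\ge\alpha N-\tfrac12\log_2 d$, and $\sqrt d\ge 1$ works in the wrong direction here. (As it happens, $\{\min_k N_k\ge\alpha N\}$ is empty when $\alpha>1/d$ since $\sum_k N_k=N$, and by convexity $\diam\ge\sqrt d\,2^{-N/d}$ so the lower-tail event is itself empty for centered trees; your containment is thus vacuously true, but not for the reason you give, and you should not lean on this accident.) The clean fix, which is exactly the paper's route, is to use the other comparison: $\diam^2=\sum_k h_k^2\ge d\,\min_k h_k^2$, i.e.\ $\diam\ge\sqrt d\,2^{-\max_k N_k}$, so that
$\{\diam\le\sqrt d\,2^{-\alpha N}\}\subset\{\max_k N_k\ge\alpha N\}=\bigcup_k\{N_k\ge\alpha N\}$, and the union bound gives $d\,\PP(N_1\ge\alpha N)$, which your upper-tail Chernoff computation (with $\beta>1$ precisely when $\alpha>1/d$) then bounds by $d\left(1-\frac{1-\beta}{d}\right)^N\beta^{-\alpha N}$ as required. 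With that one-line correction your proof coincides with the paper's.
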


We have the following corollary about the shape regularity of centered random trees.

\begin{corollary}\label{cor22}
When the number of splits goes to infinity, it holds that, almost surely, there exists $n_0\geq 1$ such that for all $N\geq n_0$,
$$ \sqrt{d} 2^{-N/d-2\sqrt{(d-1)N\log(N)/d^2}} \leq \diam(\mathcal{V}(x)) \leq \sqrt d 2^{-N/d+2\sqrt{(d-1)N\log(N)/d^2}}.$$
Moreover, if we denote the normalized diameter by $\diam^{\#}(\mathcal{V}(x)) := \diam(\mathcal{V}(x)))/\sqrt{d}$, we obtain for $N$ large enough,
\begin{equation*}
  2^{\, -2\sqrt{(d-1)N\log(N)}}   \leq \dfrac{\diam^{\#}(\mathcal{V}(x))^d}{\lambda(\mathcal{V}(x)))} \leq 2^{ \, 2\sqrt{(d-1)N\log(N)}}. 
\end{equation*}
\end{corollary}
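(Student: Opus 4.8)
The plan is to deduce Corollary \ref{cor22} directly from the deviation bounds in Proposition \ref{prop:diam_CRT} by a Borel--Cantelli argument, exactly as was presumably done for Corollary \ref{cor_diamvolURT} in the uniform case. First I would fix a sequence $\alpha = \alpha_N$ approaching $1/d$ from below (and symmetrically from above) at a rate tuned so that the probability bound in Proposition \ref{prop:diam_CRT} becomes summable in $N$. Concretely, write $\alpha_N = 1/d - c\sqrt{\log(N)/N}$ for a suitable constant $c$ depending on $d$; then $\beta_N = (d-1)\alpha_N/(1-\alpha_N) \to 1$ at the same $\sqrt{\log N/N}$ rate, so $1-\beta_N \asymp \sqrt{\log N/N}$ and hence $(1-(1-\beta_N)/d)^N \asymp \exp(-\,\mathrm{const}\cdot\sqrt{N\log N})$, while $\beta_N^{-\alpha_N N} = \exp(-\alpha_N N \log\beta_N) \asymp \exp(\mathrm{const}\cdot\sqrt{N\log N})$ since $\log\beta_N \asymp -\sqrt{\log N/N}$. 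The point is to check that the negative exponent wins, i.e. that $d\,(1-(1-\beta_N)/d)^N \beta_N^{-\alpha_N N}$ is eventually bounded by, say, $N^{-2}$, which is summable. Applying Borel--Cantelli to both tail events then gives that almost surely, for $N$ large enough,
\begin{equation*}
\sqrt d\, 2^{-\alpha_N N} \geq \diam(\mathcal V(x)) \geq \sqrt d\, 2^{-\alpha_N' N},
\end{equation*}
where $\alpha_N' = 1/d + c\sqrt{\log N/N}$ comes from the second half of Proposition \ref{prop:diam_CRT}. Substituting $\alpha_N N = N/d - c\sqrt{N\log N}$ yields the two-sided bound $\sqrt d\, 2^{-N/d - 2\sqrt{(d-1)N\log N/d^2}} \leq \diam(\mathcal V(x)) \leq \sqrt d\, 2^{-N/d + 2\sqrt{(d-1)N\log N/d^2}}$ after identifying the right constant $c = 2\sqrt{(d-1)}/d$; I would verify that this particular $c$ indeed makes the probability bound summable, adjusting the numerical constant if necessary (the statement leaves a little slack in the exponent).

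From the diameter bound the volume ratio estimate is immediate. Since $\lambda(\mathcal V(x)) = 2^{-N}$ exactly for centered trees, and $\diam^{\#}(\mathcal V(x)) = \diam(\mathcal V(x))/\sqrt d \in [2^{-N/d - 2\sqrt{(d-1)N\log N/d^2}},\, 2^{-N/d + 2\sqrt{(d-1)N\log N/d^2}}]$, raising to the $d$-th power gives $\diam^{\#}(\mathcal V(x))^d \in [2^{-N - 2\sqrt{(d-1)N\log N}},\, 2^{-N + 2\sqrt{(d-1)N\log N}}]$, and dividing by $2^{-N}$ produces exactly
\begin{equation*}
2^{\,-2\sqrt{(d-1)N\log N}} \leq \frac{\diam^{\#}(\mathcal V(x))^d}{\lambda(\mathcal V(x))} \leq 2^{\,2\sqrt{(d-1)N\log N}}.
\end{equation*}

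I expect the main obstacle to be the bookkeeping in the first step: one must choose the perturbation $\alpha_N = 1/d \mp c\sqrt{\log N/N}$ carefully and then genuinely control the asymptotics of $(1 - (1-\beta_N)/d)^N \beta_N^{-\alpha_N N}$, because the two competing factors are both of the form $\exp(\pm\Theta(\sqrt{N\log N}))$ and one needs the combined exponent to be $\leq -2\log N$ eventually — this requires a second-order (Taylor) expansion of both $\log(1 - (1-\beta_N)/d)$ and $\log\beta_N$ in the small parameter $\sqrt{\log N/N}$, keeping track of the constant $c$. Everything else (Borel--Cantelli, passing from $\diam$ to $\diam^{\#}$, raising to the power $d$, using $\lambda = 2^{-N}$) is routine. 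I would also note in passing that the statement is vacuous/trivial in dimension $d=1$ (where $\diam^{\#} = \lambda$), consistent with the hypothesis $d \geq 2$ in Proposition \ref{prop:diam_CRT}.
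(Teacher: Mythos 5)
Your plan coincides with the paper's proof: apply Proposition \ref{prop:diam_CRT} with $\alpha_N = 1/d \mp \gamma_N$ where $\gamma_N = 2\sqrt{(d-1)\log(N)/(d^2 N)}$ (i.e.\ your $c = 2\sqrt{d-1}/d$), expand $\log\bigl(1-(1-\beta_N)/d\bigr) - \alpha_N\log\beta_N$ to second order in $\gamma_N$ — the first-order $\sqrt{N\log N}$ terms cancel and the exponent becomes $-\tfrac{a_d}{2}N\gamma_N^2 + O(N\gamma_N^3) = -2\log N + o(\log N)$ with $a_d = d^2/(d-1)$, giving a summable $N^{-2}$ bound — then conclude by Borel--Cantelli and divide by the exact volume $\lambda(\mathcal V(x)) = 2^{-N}$. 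The bookkeeping you defer is exactly this second-order cancellation, and it does work out with your constant, so the proposal is correct and follows essentially the same route as the paper.
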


In the same spirit as for uniform random tree, we give an upper bound on the pointwise error of the resulting local map regression estimator. 

\begin{corollary}\label{cor_UcR}
Let  $n\geq 1$, $d\geq 1$, $x\in S_X$ and $N=d\log(n)/\{(d+2)\log(2)\}$. Suppose that $\mathcal V (x)= \mathcal V(x, (D_i,S_i)_{i=1}^N)$ is obtained from a centered random tree as described in Proposition \ref{prop:diam_CRT}. Under \ref{cond:epsilon} and \ref{cond:density_X}, suppose that $g$ is Lipschitz on $S_X$, there exists $C>0$, that only depends on the parameters of the problem but not on $n$, such that with probability $1$, there is $N_0$ such that for all $ n \geq N_0$,
\[
 | \hat g_{\mathcal V}(x) - g(x)| \leq
 C n^{-1/(d+2)}e^{2\sqrt{\log(n)\log\log (n)}}.
\]
\end{corollary}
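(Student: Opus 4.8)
\textbf{Proof plan for Corollary \ref{cor_UcR}.}

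The plan is to combine the general deviation bound of Theorem \ref{th2:general} with the almost sure volume/diameter asymptotics for centered random trees given in Corollary \ref{cor22}, and to check that the chosen number of steps $N = d\log(n)/\{(d+2)\log(2)\}$ produces the advertised rate. First I would verify that the hypotheses of Theorem \ref{th2:general} are met: conditions \ref{cond:epsilon} and \ref{cond:density_X} are assumed, $g$ is Lipschitz, the centered tree local map is VC (its cells are hyper-rectangles, so they lie in the VC class of boxes in $\mathbb R^d$, of dimension $2d$), and the $(\delta,n)$-large condition must be checked. Since the volume of a centered cell after $N$ steps is exactly $2^{-N}$, with $N$ as above we get $\lambda(\mathcal V(x)) = 2^{-N} = n^{-d/(d+2)}$, so $n\,\PP(\mathcal V(x)) \geq n\,\kappa f_X(x)\,2^{-N} = \kappa f_X(x)\, n^{2/(d+2)}$ by \ref{cond:density_X}; for fixed $\delta$ this dominates $8\log(4(2n+1)^{2d}/\delta)$ once $n$ is large enough, so the $(\delta,n)$-large condition holds for all $n \geq n_0$.

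Next I would plug these quantities into the bound of Theorem \ref{th2:general}. The variance term becomes
\[
\sqrt{\frac{3\sigma^2\log((n+1)^{2d}/\delta)}{n\,\kappa f_X(x)\,2^{-N}}}
= \sqrt{\frac{3\sigma^2\log((n+1)^{2d}/\delta)}{\kappa f_X(x)}}\; n^{-1/(d+2)},
\]
which is already of order $n^{-1/(d+2)}\sqrt{\log n}$, hence absorbed into the claimed bound (note $\sqrt{\log n} \le e^{2\sqrt{\log n \log\log n}}$ for large $n$). The bias term is $L(\mathcal V(x))\,\diam(\mathcal V(x))$, and here I invoke Corollary \ref{cor22}: almost surely, for $N$ large enough (equivalently $n$ large enough),
\[
\diam(\mathcal V(x)) \leq \sqrt d\, 2^{-N/d + 2\sqrt{(d-1)N\log N/d^2}}.
\]
With $N = d\log(n)/\{(d+2)\log 2\}$ one has $2^{-N/d} = n^{-1/(d+2)}$, and $N \asymp \log n$ gives $N\log N \asymp \log n\,\log\log n$, so $2^{2\sqrt{(d-1)N\log N/d^2}} = e^{c\sqrt{\log n\,\log\log n}}$ for a constant $c$ depending only on $d$; choosing the constant $C$ large enough (and using $L(\mathcal V(x)) \le L$) this is bounded by $C\,n^{-1/(d+2)}e^{2\sqrt{\log n\,\log\log n}}$. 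Adding the two terms and enlarging $C$ once more yields the stated inequality on the intersection of the full-probability event from Corollary \ref{cor22} with the large-$n$ requirement, hence with probability one for all $n \geq N_0$.

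The one genuinely delicate point is the interplay between the \emph{almost sure} nature of Corollary \ref{cor22} and the \emph{with probability} $1-3\delta$ nature of Theorem \ref{th2:general}: one must note that the randomness of the centered tree (the $(D_i)$) is independent of the sample, so the deviation event of Theorem \ref{th2:general} can be taken to hold conditionally on the tree, and then intersected with the almost sure diameter event; since fixing $\delta$ and letting $n\to\infty$ makes $1-3\delta$ irrelevant, the clean way is to run the argument for a single fixed $\delta$ and conclude via Borel–Cantelli that the bound holds eventually almost surely. I expect the only real bookkeeping obstacle to be tracking the exponent $2\sqrt{(d-1)N\log N/d^2}$ through the substitution $N \propto \log n$ and confirming that the resulting sub-polynomial factor $e^{2\sqrt{\log n\,\log\log n}}$ genuinely dominates both the diameter correction and the $\sqrt{\log n}$ from the variance term; everything else is a direct substitution into already-established results.
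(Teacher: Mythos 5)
Your overall route is the paper's route: apply Theorem \ref{th2:general} with $v=2d$, use $\lambda(\mathcal V(x))=2^{-N}=n^{-d/(d+2)}$ together with \ref{cond:density_X} to check the $(\delta,n)$-large condition, and control the bias via the diameter bound of Corollary \ref{cor22}. However, there is a genuine gap in the step that turns the high-probability bound into the stated probability-one statement. You propose to ``run the argument for a single fixed $\delta$ and conclude via Borel--Cantelli,'' but with a fixed $\delta$ the failure probability of the event in Theorem \ref{th2:general} is $3\delta$ for every $n$, which is not summable, so the first Borel--Cantelli lemma gives nothing; and ``fixing $\delta$ and letting $n\to\infty$'' does not make $1-3\delta$ irrelevant -- it caps you at probability $1-3\delta$, not $1$. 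The correct (and paper's) fix is to apply Theorem \ref{th2:general} with a summable sequence, e.g.\ $\delta=\delta_n=(n+1)^{-2}$, which only changes the logarithmic factor to $\log((n+1)^{v+2})$ (still $O(\log n)$, so the $(\delta_n,n)$-large check via $n\,\PP(\mathcal V(x))\gtrsim \kappa f_X(x)\,n^{2/(d+2)}\gg\log n$ still goes through), and then invoke Borel--Cantelli to get that the deviation bound holds eventually almost surely, after which the almost-sure diameter bound of Corollary \ref{cor22} can be intersected. The independence of the tree from the sample, which you lean on, is not needed for this.

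A second, smaller point: your final absorption step ``$e^{c\sqrt{\log n\,\log\log n}}$ for a constant $c$ depending only on $d$, so choosing $C$ large enough this is bounded by $C\,n^{-1/(d+2)}e^{2\sqrt{\log n\,\log\log n}}$'' is not valid for an arbitrary $c$: if $c>2$ no constant $C$ can absorb the ratio $e^{(c-2)\sqrt{\log n\,\log\log n}}\to\infty$. One must actually check that the exponent is at most $2\sqrt{\log n\,\log\log n}$; this holds because $2\sqrt{d-1}/d\le 1$ (i.e.\ $(d-2)^2\ge 0$) turns the correction $2^{2\sqrt{(d-1)N\log N}/d}$ into at most $e^{\sqrt{N\log N}}$, and $N\le 2\log n$, $\log N\le 2\log\log n$ for $n$ large give $\sqrt{N\log N}\le 2\sqrt{\log n\,\log\log n}$; the variance term's $\sqrt{\log n}$ is then absorbed as you say. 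You flagged this as the bookkeeping obstacle but did not carry it out; it works, but it is exactly where the specific factor $e^{2\sqrt{\log n\,\log\log n}}$ in the statement comes from, so it cannot be waved away by enlarging $C$.
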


As for uniform random trees, the convergence rate is close to - but greater than - the optimal one, in the sense that for any $\varepsilon>0$, the estimation error is negligible compared to $n^{-1/(d+2)+\varepsilon}$ for $n$ sufficiently large.

We also include a negative result which establishes that centered trees are not shape-regular. 

\begin{proposition}\label{centered tree not regular}
Centered trees are not $\beta$-SR, i.e.,
for any \( N \geq d \) and any hyper-rectangle $\mathcal V (x)= \mathcal V(x, (D_i,S_i)_{i=1}^N)$ obtained from a centered random tree as described in Proposition \ref{prop:diam_CRT}, we have, with probability at least $1/14$,
$$\dfrac{h_+(\mathcal V (x))}{h_-(\mathcal V (x))} \geq 2^{\sqrt{N/d}}.$$
\end{proposition}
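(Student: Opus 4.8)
The plan is to exhibit an event of probability at least $1/14$ on which one coordinate is split much more often than another, forcing a large ratio between the corresponding side lengths. Fix the index $N\geq d$. Since $S_i = 1/2$ almost surely, after $N$ splits the side length in direction $k$ is exactly $2^{-N_k}$, where $N_k = \#\{i\le N : D_i = k\}$ is the number of times coordinate $k$ is chosen among the first $N$ i.i.d.\ uniform draws on $\{1,\ldots,d\}$. Hence $h_+(\mathcal V(x))/h_-(\mathcal V(x)) = 2^{\max_k N_k - \min_k N_k}$, and it suffices to show that, with probability at least $1/14$, $\max_k N_k - \min_k N_k \ge \sqrt{N/d}$.

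The key step is a lower bound on the fluctuations of the multinomial vector $(N_1,\ldots,N_d)$. I would isolate a single coordinate, say $k=1$: then $N_1 \sim \mathrm{Binomial}(N, 1/d)$ with mean $N/d$ and variance $N(d-1)/d^2$. A Paley--Zygmund--type or direct anticoncentration argument gives that $N_1 \ge N/d + \sqrt{N/d}$ with probability bounded below by a universal constant; more concretely, since $\sqrt{N/d} \le \sqrt{\mathrm{Var}(N_1)}\cdot\sqrt{d/(d-1)} \le \sqrt{2}\,\sqrt{\mathrm{Var}(N_1)}$, one is asking $N_1$ to exceed its mean by at most $\sqrt{2}$ standard deviations, and a one-sided Chebyshev (Cantelli) inequality together with a complementary lower bound — or, cleanest, a Berry--Esseen comparison to a Gaussian when $N$ is large and a finite check for small $N$ — yields $\PP(N_1 \ge N/d + \sqrt{N/d}) \ge p_0$ for an explicit $p_0$. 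Symmetrically, on the event $\{N_1 \ge N/d + \sqrt{N/d}\}$ we have $\sum_{k\ge 2} N_k \le N - N/d - \sqrt{N/d}$, so $\min_{k\ge 2} N_k \le (N - N/d - \sqrt{N/d})/(d-1) = N/d - \sqrt{N/d}/(d-1) \le N/d$, whence $\max_k N_k - \min_k N_k \ge N_1 - \min_{k\ge 2} N_k \ge \sqrt{N/d}$. Thus the single event $\{N_1 \ge N/d + \sqrt{N/d}\}$ already does the job, and its probability is at least $p_0$.

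The main obstacle is pinning down the constant: showing $p_0 \ge 1/14$ uniformly over all $N\ge d$ and all $d\ge 1$. For $d=1$ the statement is vacuous or trivial ($N_1 = N$, ratio $=1 = 2^0$ and $\sqrt{N/1}$ forces nothing unless $N=0$ — so one should read the claim for $d\ge 2$, consistent with Proposition \ref{prop:diam_CRT}). For $d\ge 2$ and $N$ large, the CLT gives $\PP(N_1 \ge N/d + \sqrt{N/d}) \to 1 - \Phi(\sqrt{d/(d-1)}) \ge 1 - \Phi(\sqrt{2}) \approx 0.079 > 1/14 \approx 0.071$, with a Berry--Esseen correction of order $1/\sqrt{N}$ absorbed for $N$ beyond an explicit threshold; the finitely many remaining pairs $(d,N)$ with $d\le$ that threshold are handled by direct computation of the binomial tail. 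I would streamline this by instead invoking a clean nonasymptotic binomial anticoncentration bound (e.g.\ the Greenberg--Mohri or Slud-type lower bounds on binomial tails) that gives $\PP(N_1 \ge N/d + \sqrt{N(d-1)/d^2}\,) \ge c$ for an absolute $c$, then noting $\sqrt{N(d-1)/d^2} \ge \sqrt{N/d}\cdot\sqrt{(d-1)/d} \ge \sqrt{N/d}/\sqrt 2$ is not quite enough — so one must be slightly careful and either accept a marginally weaker exponent or verify that Slud's inequality applies at exactly $k = \lceil N/d + \sqrt{N/d}\rceil$. This bookkeeping is the only delicate part; everything else is the deterministic identity $h_+/h_- = 2^{\max N_k - \min N_k}$ and the pigeonhole step bounding $\min_k N_k$.
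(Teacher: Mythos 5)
Your deterministic skeleton is fine and matches the paper's: with $S_i=1/2$ a.s.\ one has $h_+(\mathcal V(x))/h_-(\mathcal V(x)) = 2^{\max_k N_k - \min_k N_k}$ where $N_k=\#\{i\le N: D_i=k\}$, so everything reduces to anticoncentration of the split counts. The gap is in the probabilistic step, and it is not mere bookkeeping: the concrete event you reduce to, $\{N_1 \ge N/d + \sqrt{N/d}\}$ with $N_1\sim\mathrm{Bin}(N,1/d)$, does \emph{not} have probability at least $1/14$ uniformly over $N\ge d$. For $(N,d)=(4,2)$ it forces $N_1=4$, probability $1/16<1/14$; for $(N,d)=(5,2)$ it forces $N_1=5$, probability $1/32$; similar dips occur at $(7,2)$, $(13,2)$, etc. So no Berry--Esseen threshold plus finite check can rescue the claim as written --- the claim itself is false at these points. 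The structural reason is that $\sqrt{N/d}$ equals $\sqrt{d/(d-1)}\ge 1$ standard deviations of $N_1$, so the threshold sits at or above the second moment of the centered single-coordinate count and Paley--Zygmund (with second and fourth moments) cannot reach it, while the Gaussian limit value $1-\Phi(\sqrt{2})\approx 0.079$ leaves too little room over $1/14\approx 0.071$ to absorb lattice effects at moderate $N$.

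The paper sidesteps exactly this by working with the \emph{pairwise difference} of counts rather than a single count: writing $Z_{1,2}=\log(2)\sum_{i=1}^N (B_i^{(1)}-B_i^{(2)})$, one has $h_+/h_-\ge \exp(|Z_{1,2}|)$, and $Z_{1,2}$ has variance $2N\log(2)^2/d$, so the target deviation $\log(2)\sqrt{N/d}$ is only $1/\sqrt{2}$ of a standard deviation. Paley--Zygmund applied to $Z_{1,2}^2$ with $\theta=1/2$, together with the exact fourth moment $\EE[Z_{1,2}^4]=\tfrac{2N}{d^2}\log(2)^4(6N-6+d)$, gives $\PP(|Z_{1,2}|\ge \log(2)\sqrt{N/d}) \ge \tfrac14\cdot\tfrac{2N}{6N-6+d}\ge \tfrac1{14}$ for $N\ge d$, uniformly and without any case analysis. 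If you want to stay closer to your route, note that your pigeonhole actually yields $\max_k N_k-\min_k N_k\ge \tfrac{d}{d-1}\,t$ on $\{|N_1-N/d|\ge t\}$, so the needed threshold is $t=\tfrac{d-1}{d}\sqrt{N/d}$, i.e.\ $\sqrt{(d-1)/d}<1$ standard deviations; but even then Paley--Zygmund on the single centered count gives only a $d$-dependent constant, so the clean uniform bound really does come from the symmetric two-coordinate difference as in the paper.
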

As for Proposition \ref{uniform tree not regular} above, the precise value $1/14$ does not play any crucial role here. The important fact is that Proposition \ref{centered tree not regular} shows that shape-regularity is violated on an event having positive probability (independent from $n$).

\subsection{Mondrian trees}\label{s43}

 A Mondrian process \textit{MP} is a process that generates infinite tree partitions of $S_X = [0,1]^d $. These partitions are built by iteratively splitting the different cells at random times, where both the timing and the position of the splits are determined randomly. Additionally, the probability that a cell is split depends on the length of its sides, and the probability of splitting a particular side is proportional to the length of that side. Once a side is selected, the exact position of the split is chosen uniformly along that side.
We can then define the pruned Mondrian process \textit{MP}($\lambda$). This version introduces a pruning mechanism that removes splits occurring after a specific time $\lambda > 0$, which is referred to as the lifetime.

Mondrian trees are studied in details in the paper \cite{minimaxmondrian}, along with various properties that help demonstrate that Mondrian trees are \(\beta\)-SR in probability.

\begin{proposition}\label{mondrian}
 Mondrian trees are \(\beta\)-SR in probability. More precisely, for any $x\in [0,1] ^d$, let  $\mathcal V (x)$ be the hyper-rectangle containing $x$ obtained from a \textit{MP}($\lambda $) tree. For $\delta \leq 1 - (1 - e^{-1})^d$, we then have, with probability at least \(1 - 2\delta\),
\[ \frac{h_+(\mathcal V (x))}{h_-(\mathcal V (x))} \leq \frac{5d \log(\delta/d)}{{\log(1-\delta)}}. \]
\end{proposition}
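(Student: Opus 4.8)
The plan is to obtain the shape bound from a few facts about Mondrian cells established in \cite{minimaxmondrian}. The starting point is the classical distributional description of the cell of an \textit{MP}($\lambda$) process around a fixed point: writing $\mathcal V(x)=\prod_{j=1}^d I_j$ for the cell containing $x=(x_1,\dots,x_d)$, one has
\[
  h_j(\mathcal V(x)) \;\overset{d}{=}\; \min\!\Big(x_j,\tfrac{E_j^-}{\lambda}\Big)+\min\!\Big(1-x_j,\tfrac{E_j^+}{\lambda}\Big),\qquad j=1,\dots,d,
\]
with $E_1^-,E_1^+,\dots,E_d^-,E_d^+$ i.i.d.\ standard exponential random variables; in particular the $d$ side lengths of $\mathcal V(x)$ are \emph{mutually independent}. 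From this I would extract two elementary one-coordinate tail bounds: for every $a\in(0,\tfrac12)$ and every $b>0$,
\[
  \PP\!\big(h_j(\mathcal V(x))<a\big)\le 1-e^{-a\lambda},\qquad
  \PP\!\big(h_j(\mathcal V(x))>b\big)\le (1+b\lambda)\,e^{-b\lambda},
\]
the first by bounding $h_j(\mathcal V(x))$ below by the half-side of $I_j$ on whichever side $x_j$ is farther from $\{0,1\}$ (a distance $\ge\tfrac12$, so no truncation is active there once $a<\tfrac12$), and the second by $h_j(\mathcal V(x))\le (E_j^-+E_j^+)/\lambda\sim\mathrm{Gamma}(2,1)$.

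I would then upgrade these to two-sided control of the shape of $\mathcal V(x)$. For the smallest side, independence gives $\PP(h_-(\mathcal V(x))\ge a)=\prod_{j=1}^d\PP(h_j(\mathcal V(x))\ge a)\ge e^{-da\lambda}$, so the choice $a_\delta:=\dfrac{-\log(1-\delta)}{d\lambda}$ yields $\PP(h_-(\mathcal V(x))<a_\delta)\le\delta$; using independence rather than a union bound over coordinates is essential here, since the latter would shrink the admissible threshold by a factor of order $d$ and destroy the stated constant. For the largest side, a union bound over the $d$ coordinates together with the $\mathrm{Gamma}(2,1)$ tail gives $\PP(h_+(\mathcal V(x))>b)\le d(1+b\lambda)e^{-b\lambda}$, and the choice $b_\delta:=\dfrac{5\log(d/\delta)}{\lambda}$ makes this at most $\delta$: the hypothesis $\delta\le 1-(1-e^{-1})^d$ enters precisely at this point, as it implies $d/\delta\ge e$, which is what renders the elementary inequality $1+5\log(d/\delta)\le (d/\delta)^4$ true; the same hypothesis also keeps $a_\delta$ within the range where the lower-tail bound is in force (in the regime of a large lifetime $\lambda$, the only one of interest — the small-$\lambda$ case being immediate, since then $\mathcal V(x)=[0,1]^d$ with probability close to one).

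Intersecting the two deviation events by a final union bound, with probability at least $1-2\delta$ we have $h_-(\mathcal V(x))\ge a_\delta$ and $h_+(\mathcal V(x))\le b_\delta$ simultaneously, whence
\[
  \frac{h_+(\mathcal V(x))}{h_-(\mathcal V(x))}\le\frac{b_\delta}{a_\delta}=\frac{5\,d\,\log(d/\delta)}{-\log(1-\delta)}=\frac{5d\log(\delta/d)}{\log(1-\delta)},
\]
which is the announced bound; note that the unknown lifetime $\lambda$ cancels, as it must. In this scheme the tail computations are routine; the steps I expect to demand the most care are importing the exact cell representation from \cite{minimaxmondrian} together with the boundary truncation coming from $[0,1]^d$, and tracking constants tightly enough that $b_\delta$ produces exactly the factor $5$ — which is where the somewhat unusual restriction $\delta\le 1-(1-e^{-1})^d$ is spent.
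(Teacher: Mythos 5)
Your proposal is correct and follows essentially the same route as the paper: both rest on the cell-of-$x$ representation from the cited Mondrian paper, bound $h_-(\mathcal V(x))$ from below and $h_+(\mathcal V(x))$ from above each with probability $1-\delta$ (the hypothesis $\delta\le 1-(1-e^{-1})^d$ being spent, as you do, to make $\log(d/\delta)\ge 1$ and absorb constants into the factor $5$), and conclude by a union bound, the only difference being cosmetic — you use the explicit $\Gamma(2,\lambda)$ survival function plus a union bound over coordinates for $h_+$, where the paper raises the Gamma cdf to the power $d$ and invokes a sub-gamma concentration inequality. Your extra care with the boundary truncation (the restriction $a<1/2$ in the lower tail) causes no gap: in the residual case $a_\delta\ge 1/2$ one has $\lambda d\le -2\log(1-\delta)$, so the root cell is never split with probability $e^{-\lambda d}\ge (1-\delta)^2\ge 1-2\delta$ and the ratio equals $1$, well below the stated bound, which is the one-line completion of the case you flag as immediate (the paper's proof simply works with the untruncated Gamma representation and does not address this regime at all).
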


This implies that there is a constant $K_\delta >0 $ such that $h_+(\mathcal V (x)) \leq K_\delta \, h_-(\mathcal V (x))$ and thus ${h_+(\mathcal V (x))} /{h_-(\mathcal V (x))}~=~O_{\mathbb P}(1).$
As a consequence, we obtain optimal rates in probability  - that is, that hold with a fixed positive probability - for Mondrian trees. 


\section{Proofs} 


\subsection*{Proof of Theorem \ref{1}}

Let \( \mathbb P_{X_1^n} \) denote the conditional probability given \( X_1, \dots, X_n \). Let $\mathcal V = \{ \mathcal V (x) \, : \, x\in \mathbb R^d\}$ and define
$$ \mathcal G = \{ (\mathds 1 _{ A }(X_1), \dots,\mathds 1 _{ A}(X_n)) \, :\,  A \in \mathcal{A}  \}.$$
With this notation we have 
$$\sup_{x\in \mathbb R^d} \frac{\sum_{i=1}^n \varepsilon_i \mathds 1 _{ \mathcal V(x) }(X_i)}{\sqrt{\sum_{j=1}^n \mathds 1 _{ \mathcal V(x) }(X_j)}} \leq  \sup_{(g_1,\ldots, g_n) \in \mathcal G } \frac{\sum_{i=1}^n \varepsilon_i g_i }{\sqrt{ \sum_{j=1}^n g_j}} $$
Consequently, for all $t > 0$,
\begin{align*}
   \pr_{X_1^n}\left(\sup_{x\in \mathbb R^d} \dfrac{\sum_{i=1}^n \varepsilon_i \mathds 1 _{ \mathcal V(x) }(X_i)}{\sqrt{\sum_{j=1}^n \mathds 1 _{ \mathcal V(x) }(X_j)}} > t  \right) & \leq   \pr_{X_1^n}\left( \bigcup_{(g_1,\ldots, g_n) \in \mathcal G }  \left\{  \frac{\sum_{i=1}^n \varepsilon_i g_i }{\sqrt{ \sum_{j=1}^n g_j}} > t \right\}  \right) \\ &\leq \sum_{(g_1,\ldots, g_n) \in \mathcal G } \pr_{X_1^n}\left(\dfrac{\sum_{i=1}^n \varepsilon_i g_i}{\sqrt{ \sum_{j=1}^n g_j}} > t  \right).
\end{align*}
Moreover, since the conditional law of $\varepsilon_i$ with respect to $X_1,\ldots,X_n$ is sub-Gaussian with parameter $\sigma^2$, then  $\varepsilon_i g_i$ is sub-Gaussian under $\pr_{X_1^n}$, with parameter $\sigma^2 g_i^2$. Hence, ${\sum_{i=1}^n \varepsilon_i g_i}/{\sqrt{ \sum_{j=1}^n g_j}}$ is sub-Gaussian with parameter $\sigma^2 \sum_{i=1}^n g_i^2 / \sum_{j=1}^n g_j$ by independence. Moreover, $\sum_{i=1}^n g_i^2 = \sum_{i=1}^n g_i$ because $g_i\in \{ 0,1\} $. Hence, ${\sum_{i=1}^n \varepsilon_i g_i}/{\sqrt{ \sum_{j=1}^n g_j}}$ is sub-Gaussian with parameter $\sigma^2$ under $\pr_{X_1^n}$. Therefore, we obtain
\begin{align*}
    \pr_{X_1^n}\left(\sup_{x\in \mathbb R^d} \dfrac{\sum_{i=1}^n \varepsilon_i \mathds 1 _{ \mathcal V(x) }(X_i)}{\sqrt{\sum_{j=1}^n \mathds 1 _{ \mathcal V(x) }(X_j)}} > t  \right)  &\leq \sum_{(g_1,\ldots, g_n) \in \mathcal G } \exp\left( \dfrac{-t^2}{2\sigma^2}\right)  \leq  \mathbb S_\mathcal V(n) \exp\left( \dfrac{-t^2}{2\sigma^2}\right).
\end{align*}
   If we set $\delta = \mathbb S_\mathcal A(n) \exp\left( {-t^2} /{(2\sigma^2)}\right)$, we have $t = \sqrt{2\sigma^2 \log\left({\mathbb S_\mathcal A(n)}/{\delta}\right)}$. Finally, with probability $\pr_{X_1^n}$ at least equal to $1 - \delta$, we get
$$\sup_{x\in \mathbb R^d} \dfrac{\sum_{i=1}^n \varepsilon_i \mathds 1 _{ \mathcal V(x) }(X_i)}{\sqrt{\sum_{j=1}^n \mathds 1 _{ \mathcal V(x) }(X_j)}}  \leq \sqrt{2 \sigma^2 \log\left( \frac{\mathbb S_{\mathcal A}(n)}{\delta} \right)}.$$
   Since $\delta$ is independent of $(X_1,\ldots,X_n)$, we obtain the result by integrating with respect to $(X_1,\ldots,X_n)$.
\qed 

\subsection*{Proof of Theorem \ref{th:general}}

    Let \(x \in S_X\). We write the bias-variance decomposition 
$\hat g_{\mathcal V}(x) - g(x) = V + B$, where \begin{align*}
    V := \dfrac{\sum_{i=1}^n \varepsilon_i \mathds 1 _{ \mathcal V(x) }(X_i)}{\sum_{j=1}^n \mathds 1 _{ \mathcal V(x) }(X_j)} \qquad \text{and}\qquad
       B := \dfrac{\sum_{i=1}^n \left(g(X_i) - g(x)\right) \mathds 1 _{ \mathcal V(x) }(X_i)}{\sum_{j=1}^n \mathds 1 _{ \mathcal V(x) }(X_j)}.
\end{align*}
The inequality from Theorem \ref{1} gives, with probability at least $1 - 2\delta$, for all $x \in S_X $,
\begin{align*}
    |V| \leq \left({\sqrt{\sum_{j=1}^n \mathds 1 _{ \mathcal V(x) }(X_j)}} \right)^{-1} \sup_{x\in \mathbb R^d} \left| \dfrac{\sum_{i=1}^n \varepsilon_i \mathds 1 _{ \mathcal V(x) }(X_i)}{\sqrt{\sum_{j=1}^n \mathds 1 _{ \mathcal V(x) }(X_j)}} \right| \leq \dfrac{1}{\sqrt{n \PP_n(\mathcal{V}(x))}} \ \sqrt{2 \sigma^2 \log\left( \frac{\mathbb S_\mathcal A(n)}{\delta} \right)} .
\end{align*}
Using the inequality $\mathbb S_\mathcal A(n) \leq (n+1)^v$ we recover the first term of the stated bound. Furthermore, using the triangle inequality, we obtain that
\begin{eqnarray*}
    |B| &\leq& \dfrac{\sum_{i=1}^n \left|g(X_i) - g(x)\right| \mathds 1 _{ \mathcal V(x) }(X_i)}{\sum_{j=1}^n \mathds 1 _{ \mathcal V(x) }(X_j)} \\ &\leq& \dfrac{\sum_{i=1}^n \sup_{y \in \mathcal V(x)} |g(y) - g(x)| \mathds 1 _{ \mathcal V(x) }(X_i)}{\sum_{j=1}^n \mathds 1 _{ \mathcal V(x) }(X_j)} = \sup_{y \in \mathcal V(x)} |g(y) - g(x)|.
\end{eqnarray*}
Moreover, using the Lipschitz assumption, it follows that
$$|g(y) - g(x)| \leq  L(\mathcal V(x)) \|x-y\|_2  \leq L(\mathcal V(x)) \diam  (\mathcal V(x) ) , $$
which concludes the proof.
\qed

\subsection*{Proof of Theorem \ref{th2:general}}

Assume that the maximum of $\PP_n(\mathcal V(x))$ and $\PP(\mathcal V(x))$ is $\PP(\mathcal V(x)).$
We have, by assumption, for all $x\in S_X$,
$$ \frac{ n     \PP(\mathcal V(x)) }{2}  \geq 4 \log\left(\dfrac{4 (2n+1) ^v }{\delta} \right). $$
 Using that $1-1/\sqrt 2 \geq  2/3$, we deduce that
$$ \frac 2 3 \leq 1 -   \sqrt{ \frac{4 \log\left(\dfrac{4 (2n+1) ^v }{\delta} \right) }{n     \PP(\mathcal V(x))} }.$$
Hence, using Theorem \ref{th:vapnik_normalized}, we obtain that with probability $1-\delta$, for all $x\in S_X$,
$$ 
\PP_n(\mathcal V(x)) \geq \PP(\mathcal V(x)) \left(1-\sqrt { \frac{4 \log( 4 (2n+1) ^v / \delta) }{ n \PP (\mathcal V(x))} } \right)\geq \frac 2 3\PP (\mathcal V(x))  \geq \frac 2 3\kappa f_X(x) \lambda (\mathcal V(x) ).
$$
Now, if the maximum of $\PP_n(\mathcal V(x))$ and $\PP(\mathcal V(x))$ is $\PP_n(\mathcal V(x))$, then we have
$$ \PP_n(\mathcal V(x) ) \geq \PP(\mathcal V(x) )  \geq \kappa f_X(x) \lambda (\mathcal V(x) ).$$ 
Using Theorem \ref{th:general} and the previous inequality on $ \PP_n(\mathcal V(x) )$ yields the result.
\qed

\subsection*{Proof of Proposition \ref{link_beta_gamma}}

Let $A$ be a hyper-rectangle. We use the shortcut $h_- $ and $h_+$ for $h_- (A) $ and $ h_+(A)$, respectively. The first statement is a consequence of 
$ \diam (A) \leq \sqrt{d} h_+ $ 
and 
$ \lambda (A) \geq h_- ^d $, as using $\beta $-shape regularity, we obtain
$$ \diam (A)\leq \sqrt{d} \beta h_- \leq \sqrt{d} \beta \lambda (A) ^{1/d}  .$$
    The second statement can be obtained as follows. Since
$ \diam (A) \geq h_ + $
and  
$ \lambda (A)^{1/d} \leq h_+ ^{1 -1/d}  h_- ^{1/d}   $
we find
 \begin{align*}
      \gamma^{1/d} \geq \frac{ \diam (A)}{ \lambda (A)^{1/d} } \geq \frac{ h_ + }{  h_+ ^{1 -1/d}  h_- ^{1/d} } = \left(\frac{  h_ +}{   h_-  } \right)^{1/d}.
\end{align*}
\qed

\subsection*{Proof of Proposition \ref{contre_ex}}

Let $V_0 =  \mathcal V(0) $. Define
$$ W =  \frac{\sum_{i=1} ^n  (Y_i - g(X_i) ) \ind_{ V_0} (X_i)}{\sum_{i=1} ^n    \ind_{V_0 } (X_i)}  $$
and 
 $$B = \frac{\sum_{i=1} ^n  g(X_i)   \ind_{V_0 } (X_i)}{\sum_{i=1} ^n    \ind_{V_0} (X_i)}. $$
We have, since $g(0) = 0$, 
$$\hat g(0) - g(0) =  W + B , $$
and by conditional independence
$$ \EE [ (\hat g_{\mathcal V} (0 ) -  g(0 ))^2 ]  = \EE[W^2] + \EE [B^2]. $$
The lower bound for $W$ can be obtained relying on \ref{cond:epsilon}. We have
$$ \EE [W^2 |X_1,\ldots, X_n] = \frac{\sigma^2}{\sum_{i=1} ^ n \ind_{ V_0 } (X_i) } , $$
and then, taking the expectation and using Jensen's inequality, we get
$$  \EE [W^2 ] \geq \sigma^2  \EE \left[ \sum_{i=1} ^ n \ind_{V_0 } (X_i) \right]^{-1} = \sigma^2 ( n \lambda (V_0) ) ^{-1}.$$
To obtain the lower bound on $B$, we make use of \ref{cond:density_X}. Let  $V_1 = \prod_{k=1}^d [h_k/2 , h_k] \subset V_0$. We have, $a_0 = \sum_{i=1} ^n \ind_{V_0} (X_i)\geq \sum_{i=1} ^n \ind_{{ V_1}} (X_i) = a_1$. We are looking for a constant \( c > 0 \) such that \( a_1 \geq c a_0 \) in order to have
\begin{align*}
   B \geq \frac{\sum_{i=1} ^n g(X_i) \ind_{V_1} (X_i)}{\sum_{i=1} ^n \ind_{V_0} (X_i)}
    &\geq \frac{1}{2} ( h_1+ \dots + h_d ) \frac{a_1}{a_0} \\ &\geq \frac{c}{2}  \diam( V_0) .
\end{align*}
This implies that 
$$ \EE[B^2 ] \geq \EE [ \ind_{a_1\geq c a_0 } B^2]\geq \frac{c^2}{4}  \diam _1 ( V_0) ^2 \PP (  {a_1\geq ca_0 } ).$$
Let us look for such a constant \( c > 0\). From Theorem \ref{lemma=chernoff}, one has that with probability at least $1 - 2\delta = 1/2$,
$$ \frac{a_1}{a_0} \geq \frac{\PP({ V_1})}{\PP( V_0)} \frac{\left( 1 - \sqrt {2\log(4 ) / (n \PP( V_1)) }  \right )}{\left( 1 + \sqrt {3\log(4 ) / (n  \PP({  V_0 })})  \right )}.$$ Furthermore, note that $\lambda({ V_1}) = \prod_{k=1}^d h_k/2 = 2^{-d} \prod_{k=1}^d h_k = 2^{-d} \lambda( V_0)$ and $\PP({  V_k}) = \lambda({  V_k})$ for each  $k \in \{0,1\}$. Note also that we necessarily have $n \prod_{k=1}^d h_k \geq  2^{d+3} \log(4) \geq 3 \times 2^{d+1} \log(4) \geq 3 \times 4 \log(4).$ This ensures also that the numerator is positive. As a consequence, we find that, with probability at least $1/2$
\begin{align*}
    \frac{a_1}{a_0} &\geq 2^{-d} \, \dfrac{1 - \sqrt{\dfrac{2^{d+1} \log(4)}{n \prod_{k=1}^d h_k}}}{1+ \sqrt{\dfrac{3 \log(4)}{n \prod_{k=1}^d h_k}}} \geq 2^{-d} \dfrac{1 - 1/2}{1 + 1/2} = \dfrac{2^{-d}}{3} := c.
\end{align*}
Thus, we have obtained that
\begin{align*}
    \EE [ (\hat g_{\mathcal V} (0 ) -  g(0 ))^2 ]  &= \EE[W^2] + \EE [B^2] \\ 
    &\geq \dfrac{\sigma^2}{n \lambda (  V_0)} + \dfrac{ c ^2}{4} 
     \diam ( V_0)^2 \times \dfrac 1 2  \\
    &\geq \dfrac{\sigma^2}{n \lambda (  V_0)} + \dfrac{(c \gamma)^2}{8} \lambda ( V_ 0)^{2/d}. 
\end{align*}
where $\gamma = \overline \gamma ^{1/d} $. Let \( a_1 \) and \( a_2 \) be positive real numbers. By studying the function \( \psi : x \mapsto a_1x^{-d} + a_2 x^2 \) on \(\mathbb R_{>0}  \), we notice that \( \psi \) has global minimum achieved at \( x_m = (a_1d/(2a_2))^{1/(d+2)} \). This implies that \begin{align*}
\min_{x > 0} \psi (x) &\geq x_m^2 a_2 \left( \frac{ a_1 } {a_2x_m^{d +2 } }  + 1\right) 
\\ &= \left(\dfrac{a_1d}{2a_2}\right)^{2/(d+2)} a_2 \left( \dfrac{2}{d} +1  \right) 
\\ &= \left(\dfrac{a_2^{d/2}  a_1  d}{2}\right)^{2/(d+2)}  \left( \dfrac{2}{d} +1 \right)
\end{align*}
Now, setting \( a_1 = \sigma^2 n^{-1} \), \( a_2 = (c \gamma)^2/8 \), we find \begin{align*}
    \EE [ (\hat g_{\mathcal V} (0 ) -  g(0 ))^2 ] \geq \psi ( \lambda (\mathcal V)^{1/d}) \geq
    \left(\dfrac{\sigma^{2} d \, (c \gamma)^{d}}{2 (2\sqrt{2})^d \, n}\right)^{2/(d+2)} \left(1 + \dfrac{2}{d}\right)  = C_d ^2 \left(\dfrac{\bar\gamma \sigma^2}{n}\right)^{2/(d+2)}
\end{align*} where $$ C_d =  \sqrt{1 + \dfrac{2}{d}} \left(\dfrac{ d}{2}\right)^{1/(d+2)} \left( \frac 1 {2^d \sqrt{72}}\right)^{d/(d+2)}.$$
\qed

\subsection*{Proof of Theorem \ref{main_result_localizing_map}}

By assumption, there is $(a_-,a_+) $ such that $ 0< a _- \leq 1\leq   a _+ < +\infty $ and for all $x \in S_X$,
$$ \lambda(\mathcal V(x))  a_-  \leq \left( \frac{\log \left({(n+1)^v}/{\delta} \right)}{n} \right)^{d/(d+2)}\leq a_ + \lambda(\mathcal V(x)).$$
According to Theorem \ref{th2:general}, the $\gamma$-SR assumption, we obtain with probability at least $1 - 3\delta $, for all $x\in S_X$
\begin{align*}
        | \hat g_{\mathcal V}(x) - g(x)| &\leq  \sqrt{\frac{ 3 \sigma^2 \log\left( \frac{(n+1)^v}{\delta} \right) }{n  \kappa f_X(x) \lambda ( \mathcal V (x) )   }} + L(\mathcal V(x) ) \diam (\mathcal V(x) ) \\ 
        &\leq  \sqrt{\frac{ 3 \sigma^2 \log\left( \frac{(n+1)^v}{\delta} \right) }{n  \kappa f_X(x) \lambda ( \mathcal V (x) )   }} + L(\mathcal V(x) )\gamma^{1/d}  \lambda ( \mathcal V (x) )^{1/d} \\ &\leq  \sqrt{\frac{ 3 \sigma^2   \lambda ( \mathcal V (x) )  ^{(d+2)/d}a_+ ^{(d+2)/d}}{ \kappa f_X(x) \lambda ( \mathcal V (x) )   }} + L(\mathcal V(x) )\gamma^{1/d}  \lambda ( \mathcal V (x) )^{1/d} \\ &\leq  \left(  \sqrt{\frac{ 3 \sigma^2 a_+ ^{(d+2)/d} }{ \kappa f_X(x)}} + L(\mathcal V(x) )\gamma^{1/d} \right)  \lambda ( \mathcal V (x) )^{1/d} \\ &\leq \left( \sqrt{\frac{ 3 \sigma^2 a_+ ^{(d+2)/d}}{ \kappa f_X(x)}} + L(\mathcal V(x) )\gamma^{1/d} \right) 
 \left(\frac{\log\left( \frac{(n+1)^v}{ \delta} \right) }{ n} \right) ^{1/(d+2)} a_- ^{-1/d}.
\end{align*}
The result follows by taking care that $a_+ ^{(d+2)/d} \leq a_+^3 $  and $ a_- ^{-1/d}\leq a_-^{-1} $ which means that the universal constant in the upper bound can be taken as $ a_+^{3/2} / a_- $.






\qed

\subsection*{Proof of Theorem \ref{th:NN}}

        For any $x\in S_X$, define $ \tau(x)^d  = 2 k /  (n \kappa f_X(x) ) $  and check that  $\tau(x)^d \leq T_0^d$. Using \ref{cond:density_X} we obtain 
   \begin{align*}
    \forall x\in S_X,\qquad   n \PP(B (x, \tau(x)) ) \geq n \kappa f_X(x) \tau(x)^d   = 2  k .
   \end{align*}
   Next from Theorem \ref{th:vapnik_normalized}, and using that the set of all balls in $\mathbb R^d$, denoted by $\mathcal{A}$, has Vapnik dimension $d+1$ so that $\mathbb S_\mathcal A(2n)\leq (2n+1)^{(d+1)} $,  we deduce that with probability $1-\delta$,
   $$ \forall x\in S_X,\qquad  n \PP_n (B (x, \tau(x)) ) \geq n \PP(B (x, \tau(x)) ) - \sqrt { n \PP(B (x, \tau(x)) )  4\log( 4 (2n+1)^{(d+1)}  / \delta)   } . $$
   Note that $x\mapsto x - \sqrt{ x\ell} $ is increasing whenever $ x\geq \ell /4$. Since, by assumption on $k$, 
   $$\forall x\in S_X,\qquad  n \PP(B (x, \tau(x)) ) \geq  2k \geq 16\log( 4 (2n+1)^{(d+1)}  / \delta) \geq \log( 4 (2n+1)^{(d+1)}  / \delta).   $$
   We obtain that, with probability $1-\delta$,
   $$ \forall x\in S_X,\qquad    n \PP_n (B (x, \tau(x)) ) \geq 2k - \sqrt {8k  \log( 4  (2n+1)^{(d+1)}  / \delta)   }  .$$
   Now using again the assumption on $k$, $k \geq  {  8 \log( 4 (2n+1)^{(d+1)} / \delta)   },  $ which implies that with probability $1-\delta$
   $$ \forall x\in S_X,\qquad  n\PP_n(B (x, \tau(x)))\geq k .$$
   However, for each $x\in S_X$, $\hat \tau _{n,k}(x) $ is defined as the smallest such value of $\tau $. Therefore, we obtain that for all $x \in S_X$, $ \hat \tau_{n,k} (x)  \leq \tau (x)$ then, with probability $1-\delta$, 
   $$ \forall x\in S_X,\qquad  \hat \tau_{n,k}(x) ^d \leq \frac{ 2 k  }{ n \kappa f_X(x)  }.$$
The result then follows from applying Theorem \ref{th:general}. The variance term is obtained just noting that $n\mathbb P_n (\mathcal V (x) ) = k$ and $v = d+1 $ because the local map is valued in the collection of balls which VC dimension is given in \cite{wenocur1981some}. For the bias we use the Lipschitz condition and the inequality above since the \(\ell^2\)-diameter is twice the radius \(\hat \tau_{n,k}(x)\), which gives the upper bound with probability at least \(1 - 3\delta\).

\subsection*{Proof of Corollary \ref{corknn}}

By assumption, there is $(a_-,a_+) $ such that $ 0< a _- \leq 1\leq   a _+ < +\infty $ and $$  k \, a_-  \leq n^{2/(d+2)} \log((n+1)^{d+1}/\delta)^{d/(d+2)} \leq a_ + \, k.$$
According to Theorem \ref{th:NN}, we have the following inequalities with probability at least $1-3\delta$, for all $x\in S_X$, \begin{eqnarray*}
    && | \hat g_{\mathcal V}(x) - g(x)|  \\ &\leq& \sqrt{\frac{2\sigma^2\log ( {(n+1)^{d+1}/\delta })  a_+ }{ n^{2/(d+2)} \log((n+1)^{d+1}/\delta)^{d/(d+2)} }} +  2 L(\mathcal{V}(x)) \left( \frac{2 n^{2/(d+2)} \log((n+1)^{d+1}/\delta)^{d/(d+2)} }{n  \kappa f_X(x) a_- } \right)^{1/d} \\ &\leq& c \left( \dfrac{\log((n+1)^{d+1}/\delta)}{n}\right)^{1/(d+2)}  \dfrac{\sqrt{a_+}}{a_-}\end{eqnarray*}  where $c =    \sqrt{2\sigma^2} + 2 L(\mathcal{V}(x)) \left[2/({\kappa f_X(x)}) \right]^{1/d} .$ This choice of $k$ is valid for applying the theorem as long as $ 8 \log( 4 (2n+1)^{(d+1)} / \delta) \leq k = n^{2/(d+2)} \log((n+1)^{d+1}/\delta)^{d/(d+2)} \leq T_0^d n \kappa f_X(x) / 2,$ which is satisfied for $n$ sufficiently large. Moreover, if $f$ is bounded below uniformly on $S_X$ and $n$ is also sufficiently large, we can uniformly bound the upper bound $\sqrt{2\sigma^2} + 2 L(\mathcal{V}(x)) \left[{2}/({\kappa f_X(x)}) \right]^{1/d}$ by $\sqrt{2\sigma^2} + 2 L \left[{2}/({\kappa b}) \right]^{1/d}.$ \qed

\subsection*{Proof of Theorem \ref{th_cart_like1}}

The proof follows from an easy application of the next result which is stated for general local regression maps. 

\begin{theorem}\label{th_cart_like}
Let $S_X=[0,1]^d$, $\delta \in (0,1/3) $, $n\geq 1$, $d\geq 1$, and $m\geq 4\log(4(2n+1)^{2d}/\delta )$. Suppose that \ref{cond:density_X} and \ref{cond:epsilon} are fulfilled and that $g$ is $L$-Lipschitz. Let $\beta \geq 2$ and suppose that $\mathcal V$ is a local regression map valued in the set of hyper-rectangles contained in $S_X$, for all $V\in \left\{ \mathcal V(x) \, :\, x\in \mathbb R^d \right\}$,
    \begin{align*}
        h_+(V) \leq \beta h_- (V)
\quad \text{and}\quad 
       n \mathbb P_n( V) \geq  m,
    \end{align*}
    then we have, with probability $1-3\delta$, for all $x \in S_X$,
    \begin{align*} 
            |  \hat g_\mathcal V (x) - g(x)  | \leq \sqrt { \frac{2 \sigma ^2 \log((n+1) ^{2d}/\delta) } {  m}} + L(\mathcal{V}(x)) \beta \sqrt{d} \left(\frac{5 m}{ n f_X(x) \kappa}\right)^{1/d}.
    \end{align*}
\end{theorem}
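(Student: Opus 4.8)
The plan is to feed the general pointwise inequality of Theorem~\ref{th:general} with two facts about the cells: the shape-regularity estimate $\diam(V)\leq\sqrt d\,\beta\,\lambda(V)^{1/d}$, and an upper bound on $\lambda(\mathcal V(x))$ obtained by combining the minimal mass assumption~\ref{cond:density_X} with the uniform (normalized) Vapnik inequality over the class of hyper-rectangles. What makes all of this legitimate for a data-dependent — indeed $Y$-dependent — partition is that every empirical process involved is controlled uniformly over the fixed VC class $\mathcal A$ of hyper-rectangles of $\mathbb R^d$, which has VC dimension $v=2d$, so that $\mathbb S_{\mathcal A}(n)\leq(n+1)^{2d}$ and $\mathbb S_{\mathcal A}(2n)\leq(2n+1)^{2d}$.

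First I would apply Theorem~\ref{th:general} with this class: with probability at least $1-2\delta$ and for every $x\in S_X$,
$$ |\hat g_{\mathcal V}(x)-g(x)|\leq\sqrt{\frac{2\sigma^2\log((n+1)^{2d}/\delta)}{n\,\PP_n(\mathcal V(x))}}+L(\mathcal V(x))\,\diam(\mathcal V(x)). $$
The variance term is settled at once: the growing condition gives $n\,\PP_n(\mathcal V(x))\geq m$, so this term is at most $\sqrt{2\sigma^2\log((n+1)^{2d}/\delta)/m}$, which is the first term of the claimed bound.

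For the bias term I would first record the elementary geometry of rectangles: when $h_+(V)\leq\beta\,h_-(V)$ one has $\diam(V)\leq\sqrt d\,h_+(V)\leq\sqrt d\,\beta\,h_-(V)$, while $\lambda(V)=\prod_k h_k(V)\geq h_-(V)^d$ gives $h_-(V)\leq\lambda(V)^{1/d}$; hence $\diam(\mathcal V(x))\leq\sqrt d\,\beta\,\lambda(\mathcal V(x))^{1/d}$. By~\ref{cond:density_X}, $\lambda(\mathcal V(x))\leq\PP(\mathcal V(x))/(\kappa f_X(x))$, so it remains to prove $n\,\PP(\mathcal V(x))\leq5m$. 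For this I would invoke the normalized Vapnik inequality (Theorem~\ref{th:vapnik_normalized}) over $\mathcal A$: with probability at least $1-\delta$, simultaneously for all $A\in\mathcal A$ — in particular for the random cell $\mathcal V(x)$ — one has $n\,\PP(A)-\sqrt{4\,n\,\PP(A)\,\log(4(2n+1)^{2d}/\delta)}\leq n\,\PP_n(A)$. Writing $a=n\,\PP(\mathcal V(x))$, $\hat a=n\,\PP_n(\mathcal V(x))$ and $\ell=4\log(4(2n+1)^{2d}/\delta)$, the inequality $a-\sqrt{a\,\ell}\leq\hat a$ solved as a quadratic in $\sqrt a$ gives $a\leq\ell+2\hat a$; since $\hat a<2m$ for the cells at hand (the growing rule keeps each leaf strictly below $2m$ points) and $\ell\leq m$ by the hypothesis $m\geq4\log(4(2n+1)^{2d}/\delta)$, we obtain $a\leq m+4m=5m$. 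Therefore $\lambda(\mathcal V(x))\leq5m/(n\kappa f_X(x))$ and $\diam(\mathcal V(x))\leq\sqrt d\,\beta\,(5m/(n\kappa f_X(x)))^{1/d}$. A union bound over the two events (total failure probability $2\delta+\delta=3\delta$, admissible since $\delta<1/3$) combines the two displays into the announced inequality.

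The delicate step is the passage from $\PP_n(\mathcal V(x))$ to $\PP(\mathcal V(x))$ and then to $\lambda(\mathcal V(x))$: because $\mathcal V(x)$ is chosen using the whole sample, no fixed-set concentration bound is available and one must rely on the VC class of rectangles; moreover the growing condition $n\,\PP_n(\mathcal V(x))\geq m$ alone only controls the variance, and it is the complementary bound $n\,\PP_n(\mathcal V(x))<2m$ (a cell carrying at least $2m$ points always admits a further admissible split) that keeps the Lebesgue volume — hence the diameter, hence the bias — under control. The remaining ingredients (the VC dimension of rectangles, the geometric inequalities, the explicit solution of the quadratic) are routine.
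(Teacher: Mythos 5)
Your proof follows essentially the same route as the paper's: Theorem \ref{th:general} over the VC class of hyper-rectangles ($v=2d$) with $n\PP_n(\mathcal V(x))\geq m$ for the variance term, then the second form of Theorem \ref{th:vapnik_normalized} combined with \ref{cond:density_X} and the rectangle geometry $\diam(V)\leq\sqrt d\,\beta\,h_-(V)$ to obtain $\diam(\mathcal V(x))\leq\sqrt d\,\beta\,\bigl(5m/(n\kappa f_X(x))\bigr)^{1/d}$, finishing with the same $2\delta+\delta$ union bound. Note that, exactly as in the paper's own proof, you supplement the stated hypothesis $n\PP_n(V)\geq m$ with the upper bound $n\PP_n(V)\leq 2m$ imported from the growing rule of Algorithm \ref{alg:cart-like}; this extra control is what produces the factor $5m$, and the paper relies on it in the same way.
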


    Note that,  when growing the tree, the constraint $h_+(V) \leq \beta h_- (V) $ can never be a stopping criterion because one can always select the largest side and split it in the middle.
    When the tree is fully grown according to the prescribed rules,  acceptable splits are no longer possible. Therefore any $V $ satisfies 
    $$ 2 m \geq   n\mathbb P_n(V) \geq  m.$$
Since the Vapnik dimension of hyper-rectangles is $ v = 2d$, using Assumption \ref{cond:density_X} and Theorem \ref{th:vapnik_normalized}, we obtain with probability at least $1 - \delta$,
$$f_X(x) \kappa h_-^d \leq \PP(V) \leq \dfrac{4}{n} \log\left(\dfrac{4(2n+1)^{2d}}{\delta} \right) + 2 \PP_n(V) \leq \frac{m}{n} + \frac{4m}{n} = \frac{5m}{n}.$$
In addition,
$$\diam(V) \leq \sqrt{d} h_+ \leq \sqrt{d} \beta h_- \leq  \sqrt{d} \beta \left(\dfrac{5 m}{n f_X(x) \kappa}\right)^{1/d}.$$
It remains to apply Theorem \ref{th:general} and to use that $n \PP_n(V) \geq m$ for the variance term to get the stated result.
\qed

\subsection*{Proof of Corollary \ref{cor_cart_like1}}

It is sufficient to reason as in Corollary \ref{corknn} by applying Theorem \ref{th_cart_like1} and by bounding \(f_X\) from below by \(b > 0\).
\qed

\subsection*{Proof of Proposition \ref{prop_leb_vol_inv}}

Recall that $S_X= [0,1] ^d$, so each side length of the initial cell is equal to one. For any $k\in \{1,\ldots,d\}$, we have the following formula for the length $h_k$ of the side $k$ of the cell $\mathcal V(x,(D_i,S_i)_{i=1}^N)$,
    \[
    h_k= \prod_{i=1}^n  \tilde S_i^{B_i^{(k)}},
    \]
    where $B_i^{(k)}=\mathbb{I}_{D_i=k}$. Taking the product over $k$ gives
    \begin{align*}
         \prod_{k=1}^d h_k 
         &= \prod_{k=1}^d \prod_{i=1}^n \tilde S_i^{B_i^{(k)}} 
         =  \prod_{i=1}^n \tilde S_i^{\sum_{k=1}^d B_i^{(k)}}.
    \end{align*}
    The result follows by noting that $\lambda(\mathcal V(x,(D_i,S_i)_{i=1}^N))=\prod_{k=1}^d h_k$ and that for any $i$, $\sum_{k=1}^d B_i^{(k)} = 1$, the latter identity simply corresponding to the fact that exactly one side of the cell is split at each step.
\qed

\subsection*{Proof of Proposition \ref{prop:diamURT2}}

        First notice that, by a union bound and symmetry in the directions, we have
\[
 \pr(\diam (\mathcal{V}(x))\geq t) \leq d\pr \left(h_1 \geq \frac{t}{\sqrt{d}}\right).
\]
Furthermore, by denoting $B_i^{(1)}=\ind_{D_i=1}$ as in the proof of Proposition \ref{prop_leb_vol_inv}, we get for any $r\in (0,1)$ and $\lambda>0$,
\begin{align*}
    \pr(h_1 \geq r^N) & = \pr\left(\prod_{i=1}^N U_i^{B_i^{(1)}} \geq r^N\right)\\
    & \leq \mathbb E\left[\left(\frac{\prod_{i=1}^N U_i^{B_i^{(1)}}}{r^N}\right)^\lambda \right]=\left(\frac{\mathbb E\left[{U_1^{\lambda B_1^{(1)}}}\right]}{r^{\lambda}}\right)^N. 
\end{align*}
It holds
\[
 \mathbb E\left[{U_1^{\lambda B_1^{(1)}}}\right]= \frac{1}{d(1+\lambda)}+1-\frac{1}{d}.
\]
Hence,
\[
 \pr(h_1 \geq r^N) \leq \left(\frac{1}{d(1+\lambda)}+1-\frac{1}{d}\right)^N r^{-\lambda N}.
\]
First note that it suffices to optimize the bound for $N=1$. Let us denote 
\[
Q(\lambda)= \frac{1}{d(1+\lambda)}+1-\frac{1}{d}
\]
and 
\[
h(\lambda)= Q(\lambda) r^{-\lambda}.
\]
Denote $r=(1/e)^{1/d-\beta}$ for $\beta>0$, then
\begin{align*}
    h(\lambda)
    & =   \exp\left(\lambda\left(\frac{1}{d}-\beta\right)+\log\left(1-\frac{\lambda}{d(1+\lambda)}\right)\right)\\
    & \leq  \exp\left(\lambda\left(\frac{1}{d}-\beta\right)-\frac{\lambda}{d(1+\lambda)}\right) \\
    & \leq  \exp\left(\lambda\left(\frac{1}{d}-\beta\right)-\frac{\lambda(1-\lambda)}{d}\right) \\
    & =   \exp\left(-\lambda\left(\beta-\frac{\lambda}{d}\right)\right).
\end{align*}
By taking $\lambda= d\beta/2$, we get
\[
 \pr(h_1 \geq e^{N(\beta-1/d)}) \leq e^{-d\beta^{2}N/4}.
\]
Then for all $\beta > 0$, we obtain
\[
 \pr(\diam (\mathcal{V}(x))\geq \sqrt{d}e^{N(\beta-1/d)}) \leq d\pr \left(h_1 \geq e^{N(\beta-1/d)}\right) \leq d e^{-d\beta^{2}N/4}.
\]
We now consider the lower bound. We proceed in the same way as before. By a union bound and symmetry in the directions, we have
\[
 \pr(\diam (\mathcal{V}(x)))\leq t) \leq d\pr \left(h_1 \leq \frac{t}{\sqrt{d}}\right).
\]
Furthermore, for any $(r, \lambda)\in (0,1)^2$,
\begin{align*}
    \pr(h_1 \leq r^N) & = \pr\left(\prod_{i=1}^N U_i^{B_i^{(1)}} \leq r^N\right) = \pr\left(\prod_{i=1}^N U_i^{-\lambda B_i^{(1)}} \geq r^{-\lambda N} \right) \\
    & \leq \mathbb E\left[\left(\frac{\prod_{i=1}^N U_i^{B_i^{(1)}}}{r^N}\right)^{-\lambda} \right]=\left(\frac{\mathbb E\left[{U_1^{-\lambda B_1^{(1)}}}\right]}{r^{-\lambda}}\right)^N. 
\end{align*}
It holds
\[
 \mathbb E\left[{U_1^{-\lambda B_1^{(1)}}}\right]= \frac{1}{d(1-\lambda)}+1-\frac{1}{d}.
\]
Hence,
\[
 \pr(h_1 \leq r^N) \leq \left(\frac{1}{d(1-\lambda)}+1-\frac{1}{d}\right)^N r^{\lambda N}.
\]
Without loss of generality, we can optimize the bound for $N=1$. Define 
$$Q(\lambda)= \frac{1}{d(1-\lambda)}+1-\frac{1}{d} = 1 + \dfrac{\lambda}{d(1-\lambda)},$$
and
$$ h(\lambda)= Q(\lambda) r^{\lambda}.$$
Set $r=(1/e)^{1/d+\beta}$ for $\beta>0$, then for all $\lambda \in (0, 1/2)$,
\begin{align*}
    h(\lambda)
    & =   \exp\left(-\lambda\left(\frac{1}{d}+\beta\right)+\log\left(1 + \dfrac{\lambda}{d(1-\lambda)}\right)\right)\\
    & \leq  \exp\left(-\lambda\left(\frac{1}{d}+\beta\right)+ \dfrac{\lambda}{d(1-\lambda)}\right) \\
    & \leq  \exp\left(-\lambda\left(\frac{1}{d}+\beta\right)+ \frac{\lambda(1+2\lambda)}{d}\right) \\
    & =   \exp\left(-\lambda\left(\beta-\frac{2\lambda}{d}\right)\right),
\end{align*}
where in the second inequality we used the fact that $(1-\lambda)^{-1} \leq 1+2\lambda$ for $\lambda \in (0,1/2)$. By taking $\lambda= d\beta/4 \in (0, 1/2)$, we get
\[
 \pr(h_1 \leq e^{-N(\beta+1/d)}) \leq e^{-d\beta^{2}N/8}.
\]
Then for all $\beta \in (0, 2/d)$,
\[
 \pr(\diam (\mathcal{V}(x)))\leq \sqrt{d}e^{-N(\beta+1/d)}) \leq d\pr \left(h_1 \leq e^{-N(\beta+1/d)}\right) \leq d e^{-d\beta^{2}N/8}.
\]

\qed

\subsection*{Proof of Proposition \ref{prop:diam_vol_URT2}}

As in the proof of Proposition \ref{prop:diamURT2}, we optimize along some polynomial moments controlling the deviation probability of interest. We have $\lambda(\mathcal V(x))=\prod_{i=1}^N U_i$, which gives, for any $\alpha>1$, $\lambda \in (0,1)$,
\[
\PP((\lambda(\mathcal V(x)))^{-1}\geq e^{N\alpha})\leq \EE\left[\prod_{i=1}^N U_i^{-\lambda}\right]e^{-N\alpha \lambda} = \left(\frac{e^{-\alpha \lambda}}{1-\lambda}\right)^N.
\]
By taking $\lambda =1 -1/\alpha$, we get
\[
\PP((\lambda(\mathcal V(x)))^{-1}\geq e^{N\alpha})\leq (\alpha e^{1-\alpha})^N.
\]
Moreover for any $\lambda>0$, $\alpha \in (0,1)$,
\[
\PP(\lambda(\mathcal V(x)) \geq e^{-N\alpha})\leq \EE\left[\prod_{i=1}^N U_i^{\lambda}\right]e^{N\alpha \lambda} = \left(\frac{e^{\alpha \lambda}}{1+\lambda}\right)^N.
\]
By taking $\lambda =1/\alpha - 1$, this gives
\[
\PP(\lambda(\mathcal V(x))\geq e^{-N\alpha})\leq (\alpha e^{1-\alpha})^N.
\]
\qed

\subsection*{Proof of Corollary \ref{cor_diamvolURT}}

We will use the Borel Cantelli lemma together with the inequalities obtained in theorems \ref{prop:diamURT2} and \ref{prop:diam_vol_URT2}. To prove the upper bound on the diameter, we provide values $\beta_N$ leading to small enough probabilities. More precisely, by taking $\beta_N=2\sqrt{2\log(N)/(dN)}$, we get $e^{-Nd\beta_N^2/4}=N^{-2}$.   Then  
    \begin{equation*}
    \PP\left(\diam (\mathcal{V}(x))\geq \sqrt{d}e^{N(-1/d+\beta_N)}\right) \leq \dfrac{d}{N^2}.
\end{equation*} 
The Borel-Cantelli lemma then gives $$\PP\left(\liminf_{N \to + \infty} \left\{ \diam (\mathcal{V}(x)) \leq \sqrt{d} e^{N(-1/d + \beta_N)} \right\}\right) = 1.$$
This means that almost surely, beyond a certain rank, we have $$\diam (\mathcal{V}(x)) \leq \sqrt{d} e^{N(-1/d + \beta_N)}.$$
For the lower bound on the diameter, we proceed in the same way with the choice $\tilde \beta_N=4\sqrt{\log(N)/(dN)}$, or equivalently $e^{-Nd \tilde \beta_N^2/8}=N^{-2}$. We deduce that, almost surely, beyond a certain rank, $$\diam (\mathcal{V}(x)) \geq \sqrt{d} e^{-N(1/d + \tilde \beta_N)}.$$
Now regarding the volume, we set $\alpha_N=1+2\sqrt{\log(N)/N}$ and we obtain \[
    \left(\alpha_N e^{1-\alpha_N}\right)^N = \exp\left( {-2\sqrt{N\log(N)}+N\log\left(1+2\sqrt{\log(N)/N}\right)}\right).\]
As $\log\left(1+2\sqrt{\log(N)/N}\right)=2\sqrt{\log(N)/N} - 2\log(N)/N+ O\left((\log(N)/N)^{3/2}\right)$, we get $$\left(\alpha_N e^{1-\alpha_N}\right)^N = \exp \left( -2\log(N) + O\left(\log(N)^{3/2} / \sqrt{N} \right) \right) \sim 1/N^2.$$
The Borel-Cantelli lemma gives us that almost surely, beyond a certain rank $n_0$, we have
$$\forall N \geq n_0, \qquad \frac{ \lambda(\mathcal{V}(x))) }{ e^{-N- 2\sqrt{N\log(N)}}} \geq 1.$$
For the upper bound on the volume, we set for $N \geq 9$, $\tilde \alpha_N=1 - 2\sqrt{\log(N)/N} \in (0,1)$ and we obtain \[
    \left(\tilde \alpha_N e^{1-\tilde \alpha_N}\right)^N = \exp\left( {2\sqrt{N\log(N)}+N\log\left(1-2\sqrt{\log(N)/N}\right)}\right).\]
As $\log\left(1-2\sqrt{\log(N)/N}\right)=-2\sqrt{\log(N)/N} - 2\log(N)/N+ O\left((\log(N)/N)^{3/2}\right)$, we get $$\left(\tilde \alpha_N e^{1-\tilde \alpha_N}\right)^N = \exp \left( -2\log(N) + O\left(\log(N)^{3/2} / \sqrt{N} \right) \right) \sim 1/N^2.$$
The Borel-Cantelli lemma gives us that almost surely, beyond a certain rank $n_0$, we have
$$\forall N \geq n_0, \qquad \frac{ \lambda(\mathcal{V}(x))) }{ e^{-N + 2\sqrt{N\log(N)}}} \leq 1.$$
Finally, the last inequality stated in Corollary \ref{cor_diamvolURT} comes readily by using the two previous inequalities on the diameter and the volume.
\qed

\subsection*{Proof of Corollary \ref{cor_URT}}

Firstly, since the local regression map is obtained from a tree construction, each element in $\mathcal V ( x) : = \mathcal V ( x, (D_i,S_i)_{i=1}^N )$ is a rectangle. Hence, in light of \cite{wenocur1981some}, it holds that the image of the resulting local map is included in the set of rectangles, that has VC dimension $v = 2d$. Hence, the local map is indeed VC. 

Secondly, according to Theorem \ref{th2:general}, applied pointwise for $x\in S_X$ and with $\delta = (n+1)^{-2}$, we find that whenever $n \mathbb P (\mathcal{V}(x) ) / \log(n) \to \infty$, it holds that
$$ \sum_{n\geq 1} \mathbb P ( |\hat g_{\mathcal V}(x) - g(x)| > v_n ) < \infty,
$$
where 
$$v_n = \sqrt{ 3 \sigma^2 \log( {(n+1)^{v+2}}  ) / (n  \kappa f_X(x) \lambda ( \mathcal V(x)) )}  + L(\mathcal V(x)) \diam (\mathcal V(x)).$$
Applying the Borel Cantelli Lemma, we get that with probability $1$, for $n$ large enough,
$$|\hat g_{\mathcal V}(x) - g(x)| \leq  \sqrt{\frac{ 3 \sigma^2 \log\left( {(n+1)^{v+2}} \right)}{n  \kappa f_X(x) \lambda ( \mathcal V(x)) }} + L(\mathcal V(x)) \diam (\mathcal V(x)) .$$
Thirdly, from Corollary \ref{cor_diamvolURT} and using that $ N = d\log(n)/(d+2)$, with probability $1$, for a sufficiently large \(n\), we have
\begin{align*}
& \lambda(\mathcal{V}(x)) \geq e^{-N - 2 \sqrt{N \log(N)}} \geq n^{-d/(d+2)} e^{- 2 \sqrt{\log(n) \log(\log(n)) }  },
\end{align*}
where we have used that \(N \leq \log(n)\). Using \ref{cond:density_X}, it follows that
$$ \mathbb P (\mathcal{V}(x) ) \geq \kappa f_X(x) \lambda(\mathcal{V}(x))  \geq \kappa f_X(x) n^{-d/(d+2)} e^{- 2 \sqrt{\log(n) \log(\log(n)) }  } .$$
As a consequence,
$$ n \mathbb P (\mathcal{V}(x) ) \geq \kappa f_X( x) n ^{ 2 /(d+2) }       e^{- 2 \sqrt{\log(n) \log(\log(n)) }  }  .$$
Hence, we get that with probability $1$, $ n \mathbb P (\mathcal{V}(x) ) / \log(n) \to \infty$. This ensures the $(\delta, n)$-large hypothesis, in order to apply Theorem \ref{th2:general}.

Fourthly, by putting together the second and third point from above, we have the following inequality, with probability $1$, for $n $ large enough and $\delta = (n+1)^{-2}$,
$$|\hat g_{\mathcal V}(x) - g(x)| \leq \sqrt{\frac{ 3 \sigma^2 \log\left( (n+1)^{v+2} \right)}{n  \kappa f_X(x) \lambda ( \mathcal V(x)) }} + L(\mathcal V(x)) \diam (\mathcal V(x)).$$
This gives in virtue of Corollary \ref{cor_diamvolURT}
$$| \hat g_{\mathcal V}(x) - g(x)| \leq \sqrt{\frac{ 3 \sigma^2 \log\left( (n+1)^{v+2}\right)}{n \kappa f_X(x) e^{-N - 2\sqrt{N \log(N)}}}} + L(\mathcal V(x)) \sqrt d e^{-N/d + 2\sqrt{2N \log(N)/d}}.$$
Recalling that \( N = d\log(n)/(d+2) \), we obtain
\begin{align*}
    &| \hat g_{\mathcal V}(x) - g(x)| \\ & \leq n^{-1/(d+2)} e^{\sqrt{N \log(N)}} \sqrt{\frac{ 3 \sigma^2 \log\left( (n+1)^{v+2} \right)}{\kappa f_X(x)}} + n^{-1/(d+2)} L(\mathcal V(x)) \sqrt d e^{2\sqrt{2N \log(N)/d}}.
\end{align*}
Then
\[
| \hat g_{\mathcal V}(x) - g(x)| \leq n^{-1/(d+2)} e^{C_d \sqrt{N \log(N)}} \left( \sqrt{\frac{ 3 \sigma^2 \log\left( (n+1)^{v+2} \right)}{\kappa f_X(x)}} + L(\mathcal V(x)) \sqrt d \right),
\]
where \(C_d = \max(1, \sqrt{8/d})\). But since $\log(n+1) \leq 2 \log(n)$ for $n\geq2$, we have

\[
\sqrt{\frac{ 3 \sigma^2 \log\left( (n+1)^{v+2}\right)}{\kappa f_X(x)}} = \sqrt{\frac{ 3 \sigma^2 (v+2) \log(n+1)}{\kappa f_X(x)}} \leq \sqrt{\frac{ 6 \sigma^2 (v+2) \log(n)}{\kappa f_X(x)}}.
\]
Then we set $$C = \sqrt{\frac{ 6 \sigma^2 (v+2)}{\kappa f_X(x)}} + L(\mathcal V(x)) \sqrt d = \sqrt{\frac{ 6 \sigma^2 (d+1)}{\kappa f_X(x)}}+ L(\mathcal V(x)) \sqrt d.$$
Additionally, since \(N \leq \log(n)\), we have 
\[
N \log(N) \leq \frac{d}{d+2} \log(n) \log(\log(n)),
\]
so we set \(c_d = \sqrt{\frac{d}{d+2}} C_d = \max\left(\sqrt{\frac{d}{d+2}}, \sqrt{\frac{8}{d+2}}\right) \leq 2 \) to obtain the desired inequality.
\qed

\subsection*{Proof of Proposition \ref{uniform tree not regular}}

At each stage, for each terminal leaf, draw uniformly $D_i $ in $\{1,\ldots, d\}$ as well as a uniform random variable $U_i$. Then we divide the cell according to coordinate $k = D_i$. The corresponding length $h_k(\mathcal V (x) )$ is then updated into $h_k(\mathcal V (x) ) U_i$ and $h_k(\mathcal V (x) ) (1-U_i)$. Note that $1-U_i$ is still uniformly distributed. As a consequence, for a given leaf, after $N$ stages,  the $k$-th length has the following representation 
$$ h _ k(\mathcal V (x) ) = U_1^{B_1^{(k)}}\times \ldots \times U_N^{B_N^{(k)}} = \exp\left( \sum_{i=1} ^ N B_i^{(k)} \log(U_i) \right)  $$
where $B_i^{(k)}  = \ind_{D_i = k }$. It follows that
\begin{align*}
 &h_+(\mathcal V (x) ) = \exp\left( \max_{k=1,\ldots, d}  \sum_{i=1} ^ N B_i^{(k)} \log(U_i) \right), \\
 &h_-(\mathcal V (x) ) =  \exp\left(\min_{k=1,\ldots, d}  \sum_{i=1} ^ N B_i^{(k)} \log(U_i) \right), 
\end{align*}
and the expression of the ratio is
\begin{align*}
 h_+(\mathcal V (x) )/h_-(\mathcal V (x) )  &= \exp \left(   \max_{1 \leq k,j \leq d} \sum_{i=1} ^N (B_i^{(k)} - B_i^{(j)} ) E_i \right) \end{align*}
 where $E_i = - \log(U_i)$ follows an exponential distribution with parameter 1.

By denoting $V_i^{k,j} = B_i^{(k)} - B_i^{(j)}$, we get
\[V_i^{k,j} = \begin{cases} 
1 & \text{with probability } 1/d \\
0 & \text{with probability } 1 - 2/d \\
-1 & \text{with probability } 1/d 
\end{cases}.\]
Note that the variables \((V_i^{k,j})_{i=1}^N\) are mutually independent because the \((D_i)_{i=1}^N\) are independent. Furthermore, since the \(U_i\)'s are independent of the \(V_i\)'s, the \(V_i^{k,j}\)'s are independent of the \(E_i\)'s.
Let \(Z_{k,j} = \sum_{i=1}^{N} V_i^{k,j} E_i\) such that
\[
\frac{h_+(\mathcal V (x) )}{h_-(\mathcal V (x) )} = \exp \left( \max_{1 \leq k,j \leq d} Z_{k,j} \right).
\]
Note that \(Z_{k,j} = - Z_{j,k}\) and \(Z_{k,k} = 0\), which gives
\[
\max_{1 \leq k,j \leq d} Z_{k,j} = \max_{1 \leq k < j \leq d} | Z_{k,j} |
\]
and thus the formula 
\[
\frac{h_+(\mathcal V (x) )}{h_-(\mathcal V (x) )} = \exp \left( \max_{1 \leq k < j \leq d} | Z_{k,j} | \right).
\]
By using the Paley-Zygmund inequality to $Z_{k,j}^2$, we get for all $\theta \in (0,1)$,
\[
\PP\left(|Z_{k,j}| \geq \sqrt{\theta} \sqrt{\mathbb{E}(Z_{k,j}^2)} \right) \geq (1-\theta)^2 \frac{\mathbb{E}(Z_{k,j}^2)^2}{\mathbb{E}(Z_{k,j}^4)}.
\]
We therefore seek to calculate the 2nd and 4th moments of \(Z_{k,j}\). Since \(Z_{k,j}^2 = \sum_{i \neq \ell} V_i^{k,j} V_\ell^{k,j} E_i E_\ell + \sum_{i=1}^N V_i^{k,j \, 2} E_i^2 \), \(\mathbb{E}(V_i^{k,j}) = 0\) and by independence along the subscripts, we obtain 
$$\mathbb E(Z_{k,j}^2) = \sum_{i=1}^N \mathbb{E}((V_i^{k,j})^2) \mathbb{E}(E_i^2) = N \times \frac{2}{d} \times 2 = \frac{4N}{d}.$$
Moreover, according to Lemma \ref{lemme moment} applied to \( M_i := V_i^{k,j} E_i \), we obtain 
\begin{eqnarray*}
    \mathbb{E}(Z_{k,j}^4) &=& N \mathbb{E}(M^4) + 3N (N-1) \mathbb{E}(M^2)^2 \\ &=& N \mathbb{E}((V_i^{k,j})^4) \mathbb{E}(E_i^4) + 3N (N-1) \mathbb{E}((V_i^{k,j})^2)^2 \mathbb{E}(E_i^2)^2.
\end{eqnarray*}
Indeed, it is easily checked that the variables \( (M_i)_{i=1}^N \) are centered and independent, due to the independence between the elements of the collections \( (V_i^{k,j})_{i=1}^N \) and \( (E_i)_{i=1}^N \) and the fact that the $V_i^{k,j}$ are centered. Basic calculations then give
\begin{eqnarray*}
    \mathbb{E}(Z_{k,j}^4) &=& N \mathbb{E}({V_i^{k,j \, 4}}) \mathbb{E}(E_i^4) + 3N (N-1) \mathbb{E}({V_i^{k,j \, 2}})^2 \mathbb{E}(E_i^2)^2 \\ &=& N \times \dfrac{2}{d} \times  4! + 3N(N-1) \left( \dfrac{2}{d}\right)^2 \times 2^2 \\ &=& \dfrac{48N}{d^2} (d+N-1).
\end{eqnarray*}
Consequently, we get
\[
\frac{\mathbb{E}(Z_{k,j}^2)^2}{\mathbb{E}(Z_{k,j}^4)} = \frac{16N^2}{d^2} \times \frac{d^2}{48N (d + N -1)} = \frac{N}{3(d + N -1)}
\]
and thus, for all \(\theta \in (0,1)\),
\[
\PP\left(|Z_{k,j}| \geq \sqrt{\theta} \sqrt{\frac{4N}{d}}\right) \geq (1-\theta)^2  \frac{N}{3(d + N -1)}.
\]
In particular, for \(N \geq d\), we have $3(d + N - 1) \leq 6N$, which gives
\[
\PP\left(|Z_{k,j}| \geq \sqrt{\theta} \sqrt{\frac{4N}{d}}\right) \geq (1-\theta)^2 / 6.
\]
With the choice \(\theta = 1/4\), it holds
\[
\PP\left(|Z_{k,j}| \geq \sqrt{\frac{N}{d}}\right) \geq \frac{9}{16} \times \frac{1}{6} = \frac{3}{32} \geq \frac{1}{11}.
\]
Finally, by the following lower bound,
\[
\frac{h_+(\mathcal V (x) )}{h_-(\mathcal V (x) )} = \exp \left( \max_{1\leq k < j \leq d } |Z_{k,j}| \right) \geq \exp (|Z_{1,2}|),
\]
we get, for any $N \geq d$,
\[
\PP\left(\frac{h_+(\mathcal V (x) )}{h_-(\mathcal V (x) )} \geq \exp\left(\sqrt{\frac{N}{d}}\right)\right) \geq \PP\left(\exp(|Z_{1,2}|) \geq \exp\left(\sqrt{\frac{N}{d}}\right)\right) = \PP\left(|Z_{1,2}| \geq \sqrt{\frac{N}{d}}\right) \geq \frac{1}{11}.\]
\qed

\subsection*{Proof of Proposition \ref{prop:diam_CRT}}

 As in the proof of Proposition \ref{prop:diamURT2}, notice that
\[
 \pr(  \diam (\mathcal{V}(x))\geq t) \leq d\pr \left(h_1 \geq \frac{t}{\sqrt{d}}\right).
\]
Then, for any $r\in (0,1)$ and $\lambda>0$,
\begin{align*}
    \pr(h_1 \geq r^N) & = \pr\left(\prod_{i=1}^N 2^{-B_i^{(1)}} \geq r^N\right)\\
    & \leq \mathbb E\left[\left(\frac{\prod_{i=1}^N 2^{-B_i^{(1)}}}{r^N}\right)^\lambda \right]=\left(\frac{\mathbb E\left[{2^{-\lambda B_1^{(1)}}}\right]}{r^{\lambda}}\right)^N. 
\end{align*}
It holds
\[
 \mathbb E\left[{2^{-\lambda B_1^{(1)}}}\right]= \frac{1}{d2^{\lambda}}+1-\frac{1}{d}.
\]
Hence,
\[
 \pr(h_1 \geq r^N) \leq \left(\frac{1}{d2^{\lambda}}+1-\frac{1}{d}\right)^N r^{-\lambda N}.
\]
Let us set $r=2^{-\alpha}$ and define 
\[
h(\lambda)= Q(\lambda) 2^{\lambda \alpha}
\]
with
\[
Q(\lambda)= \frac{1}{d2^{\lambda}}+1-\frac{1}{d}.
\]
By differentiating in $\lambda$, we get
\[
h^{\prime}(\lambda)=\log(2) 2^{\lambda \alpha}\left(\alpha Q(\lambda)-\frac{1}{d2^\lambda}\right).
\]
Hence, $h^{\prime}(\lambda_0)=0$ for $\lambda_0$ such that $2^{-\lambda_0}=\beta=\alpha (d-1)/(1-\alpha)$ and $\alpha \in (0,1/d)$. With this choice of $\lambda$, 
\[
 \PP(\diam(\mathcal{V}(x))\geq \sqrt{d}2^{-\alpha N}) \leq d\left(1-\frac{1-\beta}{d}\right)^N \beta^{-\alpha N}.
\]
We proceed in the same way as before for the diameter upper bound. By a union bound and symmetry in the directions, we have
\[
 \pr(  \diam (\mathcal{V}(x))\leq t) \leq d\pr \left(h_1 \leq \frac{t}{\sqrt{d}}\right).
\]
Then, for any $r\in (0,1)$ and $\lambda>0$,
\begin{align*}
    \pr(h_1 \leq r^N) & = \pr\left(\prod_{i=1}^N 2^{B_i^{(1)}} \geq r^{-N}\right)\\
    & \leq \mathbb E\left[\left(\frac{\prod_{i=1}^N 2^{B_i^{(1)}}}{r^{-N}}\right)^\lambda \right]=\left(\frac{\mathbb E\left[{2^{\lambda B_1^{(1)}}}\right]}{r^{-\lambda}}\right)^N. 
\end{align*}
It holds
\[
 \mathbb E\left[{2^{\lambda B_1^{(1)}}}\right]= \frac{2^{\lambda}}{d}+1-\frac{1}{d}.
\]
Hence,
\[
 \pr(h_1 \leq r^N) \leq \left(\frac{2^{\lambda}}{d}+1-\frac{1}{d}\right)^N r^{\lambda N}.
\]
Let us set $r=2^{-\alpha}$ and denote 
\[
h(\lambda)= Q(\lambda) 2^{-\lambda \alpha}
\]
with
\[
Q(\lambda)= \frac{2^{\lambda}}{d}+1-\frac{1}{d}.
\]
By differentiating in $\lambda$, we get
\[
h^{\prime}(\lambda)=\log(2) 2^{-\lambda \alpha}\left(\frac{2^\lambda}{d}-\alpha Q(\lambda)\right).
\]
Hence, $h^{\prime}(\lambda_0)=0$ for $\lambda_0$ such that $2^{\lambda_0}=\beta=\alpha (d-1)/(1-\alpha)$ and $\alpha \in (1/d , 1)$. With this choice of $\lambda$, 
\[
 \PP(\diam(\mathcal{V}(x))\leq \sqrt{d}2^{-\alpha N}) \leq d\left(1-\frac{1-\beta}{d}\right)^N \beta^{-\alpha N}.
\]
\qed

\subsection*{Proof of Corollary \ref{cor22}}

According to Proposition \ref{prop:diam_CRT}, for any $\alpha \in (1/d,1)$, we have for $\beta = \alpha (d-1)/(1-\alpha)$ the inequality
\begin{equation*}
    \PP(\diam(\mathcal{V}(x))\leq \sqrt{d}2^{-\alpha N}) \leq d\left(1-\frac{1-\beta}{d}\right)^N \beta^{-\alpha N}.
\end{equation*}
Take now $\alpha=\alpha_N=1/d+\gamma_N$, with $\gamma_N \rightarrow_{N\rightarrow +\infty} 0$. In ths case,
\[
\beta=\beta_N=\frac{\alpha_N(d-1)}{1-\alpha_N}=(d-1)\frac{1+d\gamma_N}{d-1-d\gamma_N}=1+a_d \gamma_N+ b_d \gamma_N^2+O(\gamma_N^3),
\]
where $a_d=d^2/(d-1)$ and $b_d=d^3/(d-1)^2$. This gives
\[
\log\left(1-\frac{1-\beta}{d}\right)= \frac{a_d}{d} \gamma_N+ \frac{b_d}{d}\gamma_N^2 - \frac{a^2_d}{2d^2}\gamma_N^2 + O(\gamma_N^3).
\]
Futhermore $$\log(\beta) = - a_d \gamma_N + b_d \gamma_N^2 -  \frac{a_d^2}{2} \gamma_N^2 + O(\gamma_N^3)$$
so $$\alpha \log(\beta) = \frac{a_d}{d} \gamma_N + \frac{b_d}{d} \gamma_N^2 -  \frac{a_d^2}{2d} \gamma_N^2 + a_d \gamma_N^2 + O(\gamma_N^3).$$
Then 
\begin{eqnarray*}
    \log\left(1-\frac{1-\beta}{d}\right) - \alpha \log(\beta) &=& -\frac{a^2_d}{2d^2}\gamma_N^2 + \frac{a_d^2}{2d} \gamma_N^2 - a_d \gamma_N^2 + O(\gamma_N^3) \\ &=& - a_d \gamma_N^2 \left( 1 + \frac{a_d}{2d^2} - \frac{a_d}{2d} \right) + O(\gamma_N^3).
\end{eqnarray*}
Moreover 
$$1 + \frac{a_d}{2d^2} - \frac{a_d}{2d}  = 1 + \frac{1}{2(d-1)} - \frac{d}{2(d-1)} = 1 - \frac{1}{2} = \frac{1}{2}.$$
Finally
\begin{eqnarray*}
  \left(1-\frac{1-\beta}{d}\right)^N \beta^{-\alpha N} &=& \exp \left( N \log\left(1-\frac{1-\beta}{d}\right) - N \alpha \log(\beta)    \right) \\ &=& \exp \left( - \frac{a_d}{2} N \gamma_N^2 + O(N \gamma_N^3)  \right).  
\end{eqnarray*}
Choosing $\gamma_N=2 \sqrt{\log(N)/(a_d N)} \in (0 , 1 - 1/d)$ for $N$ large enough, gives
$$\left(1-\frac{1-\beta}{d}\right)^N \beta^{-\alpha N} = \exp \left( - 2 \log(N) + O \left(\log(N)^{3/2} / \sqrt{N}   \right) \right) \underset{ {N \to + \infty}}\sim N^{-2}$$
and concludes the proof via the Borel-Cantelli lemma. Moreover for the upper bound of the diameter we use also Proposition \ref{prop:diam_CRT}. For any $\alpha \in (0,1/d)$, we have for $\beta = \alpha (d-1)/(1-\alpha)$,
\[
 \PP(\diam(\mathcal{V}(x))\geq \sqrt{d}2^{-\alpha N}) \leq d\left(1-\frac{1-\beta}{d}\right)^N \beta^{-\alpha N}.
\]
Let us take here $\alpha=\alpha_N=1/d-\gamma_N$, with $\gamma_N \rightarrow_{N\rightarrow +\infty} 0$. In ths case,
\[
\beta=\beta_N=\frac{\alpha_N(d-1)}{1-\alpha_N}=(d-1)\frac{1-d\gamma_N}{d-1+d\gamma_N}=1-a_d \gamma_N+ b_d \gamma_N^2+O(\gamma_N^3),
\]
where $a_d=d^2/(d-1)$ and $b_d=d^3/(d-1)^2$. This gives
\[
\log\left(1-\frac{1-\beta}{d}\right)= - \frac{a_d}{d} \gamma_N+ \frac{b_d}{d}\gamma_N^2 - \frac{a^2_d}{2d^2}\gamma_N^2 + O(\gamma_N^3).
\]
Futhermore $$\log(\beta) = - a_d \gamma_N + b_d \gamma_N^2 -  \frac{a_d^2}{2} \gamma_N^2 + O(\gamma_N^3)$$
so $$\alpha \log(\beta) = - \frac{a_d}{d} \gamma_N + \frac{b_d}{d} \gamma_N^2 -  \frac{a_d^2}{2d} \gamma_N^2 + a_d \gamma_N^2 + O(\gamma_N^3).$$
Then 
\begin{eqnarray*}
    \log\left(1-\frac{1-\beta}{d}\right) - \alpha \log(\beta) &=& -\frac{a^2_d}{2d^2}\gamma_N^2 + \frac{a_d^2}{2d} \gamma_N^2 - a_d \gamma_N^2 + O(\gamma_N^3) \\ &=& - a_d \gamma_N^2 \left( 1 + \frac{a_d}{2d^2} - \frac{a_d}{2d} \right) + O(\gamma_N^3).
\end{eqnarray*}
Moreover 
$$1 + \frac{a_d}{2d^2} - \frac{a_d}{2d}  = 1 + \frac{1}{2(d-1)} - \frac{d}{2(d-1)} = 1 - \frac{1}{2} = \frac{1}{2}.$$
Finally
\begin{eqnarray*}
  \left(1-\frac{1-\beta}{d}\right)^N \beta^{-\alpha N} &=& \exp \left( N \log\left(1-\frac{1-\beta}{d}\right) - N \alpha \log(\beta)    \right) \\ &=& \exp \left( - \frac{a_d}{2} N \gamma_N^2 + O(N \gamma_N^3)  \right).  
\end{eqnarray*}
Choosing $\gamma_N=2 \sqrt{\log(N)/(a_d N)}$ gives
$$\left(1-\frac{1-\beta}{d}\right)^N \beta^{-\alpha N} = \exp \left( - 2 \log(N) + O \left(\log(N)^{3/2} / \sqrt{N}   \right) \right) \underset{ {N \to + \infty}}\sim N^{-2}$$
and concludes the proof via the Borel-Cantelli lemma.

The last inequality follows directly by invoking the two previous inequalities on diameter and volume.
\qed

\subsection*{Proof of Corollary \ref{cor_UcR}}

The proof follows the same steps as the one of Corollary \ref{cor_URT}. First, since the local regression map results from a tree construction, each element in $\mathcal V ( x, (D_i,S_i)_{i=1}^N )$ is a rectangle. Hence, in light of \cite{wenocur1981some}, it holds that the resulting local map has dimension $v = 2d$.

Second, according to Theorem \ref{th2:general} (applied pointwise for $x\in S_X$), we find that whenever $ n \PP(\mathcal V ( x )) /\log(n) \to \infty$, it holds that 
$ \sum_{n\geq 1} \mathbb P ( |\hat g_{\mathcal V}(x) - g(x)| > v_n ) < \infty
$
where \[v_n = \sqrt{ 3 \sigma^2 \log( {(n+1)^{v+2}}  ) / (n  \kappa f_X(x) \lambda ( \mathcal V(x)) )}  + L(\mathcal V(x)) \diam (\mathcal V(x)).\] Note that we have set $\delta = (n+1)^{-2}$.  Making use of Borel Cantelli Lemma, it implies that with probability $1$, for $n$ large enough,
$$|\hat g_{\mathcal V}(x) - g(x)| \leq  \sqrt{\frac{ 3 \sigma^2 \log\left( {(n+1)^{v+2}} \right)}{n  \kappa f_X(x) \lambda ( \mathcal V(x)) }} + L(\mathcal V(x)) \diam (\mathcal V(x)) .$$

Third, using \ref{cond:density_X}, it follows that
$$ \mathbb P (\mathcal{V}(x) ) \geq \kappa f_X(x) \lambda(\mathcal{V}(x)) = \kappa f_X(x) 2^{-N} = n^{-d/(d+2)} \kappa f_X(x) $$
then $$ n \mathbb P (\mathcal{V}(x) ) \geq n^{2/(d+2)} \kappa f_X(x).$$
Hence, we get that $ n \mathbb P (\mathcal{V}(x) ) / \log(n) \to \infty$.

Fourth, by putting together the second and third point from above, we have the following inequality, with probability $1$, for $n $ large enough,
$$|\hat g_{\mathcal V}(x) - g(x)| \leq  \sqrt{\frac{ 3 \sigma^2 \log\left( (n+1)^{v+2} \right)}{n  \kappa f_X(x) \lambda ( \mathcal V(x)) }} + L(\mathcal V(x)) \diam (\mathcal V(x)).$$
%
%

Now, from Proposition \ref{cor22}, for a sufficiently large, we have
\begin{align*}
& \diam (\mathcal{V}(x)) \leq \sqrt d 2^{-N/d+2\sqrt{(d-1)N\log(N)/d^2}}, \\
& \lambda(\mathcal{V}(x)) = 2^{-N}.
\end{align*}

Hence, we get, with probability $1$, for $n$ large enough,
$$| \hat g_{\mathcal V}(x) - g(x)| \leq \sqrt{\frac{ 3 \sigma^2 \log\left( {(n+1)^{v+2}} \right)}{n \kappa f_X(x) 2^{-N}}} + L(\mathcal V(x)) \sqrt d 2^{-N/d+2\sqrt{(d-1)N\log(N)/d^2}}.$$

Because \( N = d\log(n)/(\log(2)(d+2)) \), we obtain
\begin{align*}
    | \hat g_{\mathcal V}(x) - g(x)| &\leq n^{-1/(d+2)} \sqrt{\frac{ 3 \sigma^2 \log\left( {(n+1)^{v+2}} \right)}{\kappa f_X(x)}} + n^{-1/(d+2)} L(\mathcal V(x)) \sqrt d e^{2\sqrt{(d-1)N\log(N)/d^2}}\\
& \leq n^{-1/(d+2)} \sqrt{\frac{ 6 \sigma^2 (v+2)\log\left( n \right)}{\kappa f_X(x)}} + n^{-1/(d+2)} L(\mathcal V(x)) \sqrt d e^{2\sqrt{(d-1)N\log(N)/d^2}}
\\
& \leq n^{-1/(d+2)} \sqrt{\frac{ 12 \sigma^2 (d+1)\log\left( n \right)}{\kappa f_X(x)}} + n^{-1/(d+2)} L(\mathcal V(x)) \sqrt d e^{\sqrt{N\log(N)}}
\end{align*}
where we use $v = 2d$ and the inequality $2 \sqrt{(d-1)/d^2} \leq 1$ since $(d-2)^2 \geq 0$.

For $n$ large enough, we have \( N = d\log(n)/(\log(2)(d+2)) \geq 8.\) Thus, this implies that $\log(n) = \frac{d+2}{d} \log(2) N \leq 3 \log(2) N = \log(8) N \leq \log(N) N$. 
Moreover $N \leq 2 \log(n)$, and for $n$ large enough $N \leq \log(n)^2$ then $\log(N) \leq 2 \log \log(n).$ Finally $\log(n) \leq \log(N) N \leq  4 \log(n) \log(\log(n)).$ We conclude by using the inequality $\sqrt{x} \leq e^{\sqrt{x}}$ for $x= \log(n)$ and setting $C = \sqrt{{12 \sigma^2 (d+1)\log\left( n \right)}/({\kappa f_X(x)})} +  L(\mathcal V(x)) \sqrt d$.
\qed

\subsection*{Proof of Proposition \ref{centered tree not regular}}

We follow the proof of Proposition \ref{uniform tree not regular}, with similar notation, but this time the variable $E_i := -\log(U_i)$ is replaced by $E_i := \log(2)$. By performing the calculations again, we find the moments with lemma \ref{lemme moment},
\begin{eqnarray*}
    \mathbb{E}(Z_{k,j}^2) &=& 2N \log(2)^2/d. \\
    \mathbb{E}(Z_{k,j}^4) &=& N \mathbb{E}({V_i^{k,j \, 4}}) \mathbb{E}(E_i^4) + 3N (N-1) \mathbb{E}({V_i^{k,j \, 2}})^2 \mathbb{E}(E_i^2)^2 \\ &=& N \times \dfrac{2}{d} \times  \log(2)^4 + 3N(N-1) \left( \dfrac{2}{d}\right)^2 \times \log(2)^4 \\ &=& \dfrac{2N}{d^2} \log(2)^4 (6N - 6 + d).
\end{eqnarray*}

The Paley-Zygmund bound becomes
\[
\frac{\mathbb{E}(Z_{k,j}^2)^2}{\mathbb{E}(Z_{k,j}^4)} = \frac{4 \log(2)^4 N^2}{d^2} \times \frac{d^2}{2N \log(2)^4 (6N - 6 + d) }= \frac{2N}{6N - 6 + d}.
\]

Thus, for all \(\theta \in (0,1)\) and $N \geq d$,
\[
\PP\left(|Z_{k,j}| \geq \sqrt{\theta} \sqrt{\frac{2N \log(2)^2}{d}}\right) \geq (1-\theta)^2 \frac{2N}{6N - 6 + d} \geq \dfrac{2(1-\theta)^2}{7}.
\]

Let us choose \(\theta = 1/2\) to obtain
\[
\PP\left(|Z_{k,j}| \geq \log(2) \sqrt{\frac{N}{d}}\right) \geq \dfrac{1}{14}
\]

and thus for $N \geq d$,
\[
\PP\left(\dfrac{h_+(\mathcal V (x))}{h_-(\mathcal V (x))} \geq 2^{\sqrt{{N}/{d}}} \right) \geq \frac{1}{14}.
\]
Thus, with probability at least $1/14$, the ratio ${h_+(\mathcal V (x))}/{h_-(\mathcal V (x))} $ is bounded below by a quantity that grows exponentially towards infinity. This means that uniform trees are not regular.
\qed

\subsection*{Proof of Proposition \ref{mondrian}}

\sloppypar{}    According to the paper by \cite{minimaxmondrian} (see proposition 1), we know the distribution of the largest and the smallest side. In fact, we have \( h_-(\mathcal V (x)) \sim \min(X_1, \dots, X_d) \) and \( h_+(\mathcal V (x)) \sim \max(X_1, \dots, X_d) \), where the \( X_i \) are i.i.d. and follow the Gamma distribution \( X \sim \Gamma(2, \lambda) \). We have for all $u \geq 0$
\[ \mathbb{P}(X \geq u) = \int_u^{+\infty} \lambda^2 t e^{-\lambda t} \, dt = e^{-\lambda u} (1 + u \lambda) \geq e^{-\lambda u}. \]
Moreover,
\[ \mathbb{P}(h_-(\mathcal V (x)) \geq u) = \mathbb{P}(X \geq u)^d \geq e^{-\lambda u d} = 1 - \delta \]
for \( u = -{\log(1-\delta)}/{(\lambda d)}. \)
Then with probability at least $1 - \delta$,
$$h_-(\mathcal V (x)) \geq - \dfrac{\log(1-\delta)}{\lambda d}.$$
We focus now on \( h_+(\mathcal V (x)) \). We have for all $t \geq 0$,
\[ \mathbb{P}(h_+(\mathcal V (x)) \leq t) = \mathbb{P}(X \leq t)^d. \]
Let \( Y := X - \EE(X) \). Since \( X \) follows a Gamma distribution, \( Y \) is Sub-Gamma. According to \cite{boucheron2013concentration} (page 29),
\[ \forall t > 0, \quad \mathbb{P}(\lambda Y \geq 2\sqrt{t} + t) \leq e^{-t}. \]
Thus,
\begin{eqnarray*}
    \mathbb{P}(\lambda h_+ \leq 2 \sqrt{t} + t + \lambda \EE(X)) &=& \mathbb{P}(\lambda Y \leq 2 \sqrt{t} + t)^d = \left(1 - \mathbb{P}(\lambda Y > 2 \sqrt{t} + t)\right)^d \\ &\geq& (1 - e^{-t})^d = 1 - \delta
\end{eqnarray*}
with \( t = - \log(1-(1-\delta)^{1/d}). \)
Therefore, with probability at least \(1 - \delta\),
\[ h_+(\mathcal V (x)) \leq \frac{2 + 2\sqrt{- \log(1-(1-\delta)^{1/d})} - \log(1-(1-\delta)^{1/d})}{\lambda}. \]
In particular, for \( \delta \leq 1 - (1 - e^{-1})^d \),
\[ h_+(\mathcal V (x)) \leq \frac{-5 \log(1-(1-\delta)^{1/d})}{\lambda} \leq \dfrac{-5\log(\delta/d)}{\lambda} \]
where the last inequality comes from the inequality \( \delta/d \leq 1 - (1 - \delta)^{1/d} \).
Hence, with probability at least \(1 - 2\delta\) for $\delta \leq 1 - (1 - e^{-1})^d$
\[ \frac{h_+(\mathcal V (x))}{h_-(\mathcal V (x))} \leq \frac{5d \log(\delta/d)}{{\log(1-\delta)}}. \]
\qed

\medskip
\section{Auxiliary results}

Let us state the following Vapnik-type inequality \cite{vapnik2015uniform}, which involves some standard-error normalization. The first inequality in the next theorem is Theorem 2.1 in  \cite{anthony1993result} (see also Theorem 1.11 in  \cite{lugosi2002pattern}). The second inequality can be obtained from the first one.

\begin{theorem}[normalized Vapnik inequality] \label{th:vapnik_normalized}
Let $(Z, Z_1, \ldots , Z_n) $ is a collection of random variables independent and identically distributed with common distribution $\mathbb P$ on $(S,\mathcal S)$. For any class $\mathcal A\subset \mathcal S $, $\delta > 0$ and $n\geq 1$, it holds with probability at least $1 -\delta $, for all $A\in \mathcal A$,
$$ \PP_n(A) \geq \PP(A) \left(1-\sqrt { \frac{4 \log( 4\mathbb S_\mathcal A(2n) / \delta) }{ n \PP (A)} } \right).  $$ 
In particular, with probability at least $1-\delta$ we have, for all $A\in \mathcal A$,
$$   \PP(A) \leq \dfrac{4}{n} \log\left(\dfrac{4\, \mathbb S_\mathcal A(2n)}{\delta} \right) + 2 \PP_n(A).$$ 
\end{theorem}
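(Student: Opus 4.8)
The plan is to obtain the first (relative-deviation) inequality as a reformulation of the classical one-sided Vapnik--Chervonenkis bound, and then to deduce the second inequality from the first by inverting a quadratic in $\sqrt{\PP(A)}$.

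First I would invoke the classical one-sided relative deviation inequality (\cite{anthony1993result}; see also \cite{boucheron2013concentration}): for every $\epsilon>0$,
$$\PP\!\left(\sup_{A\in\mathcal A}\frac{\PP(A)-\PP_n(A)}{\sqrt{\PP(A)}}>\epsilon\right)\le 4\,\mathbb S_{\mathcal A}(2n)\,e^{-n\epsilon^2/4}.$$
For completeness I would recall that this is proved by the usual route: symmetrization with an independent ghost sample $Z_1',\dots,Z_n'$ (which replaces $\PP(A)$ by an empirical average over the second sample and brings in the pooled-sample shattering number $\mathbb S_{\mathcal A}(2n)$), then conditioning on the $2n$ pooled points and introducing Rademacher signs so that only the at most $\mathbb S_{\mathcal A}(2n)$ distinct traces of $\mathcal A$ survive, and finally a union bound together with a Bernstein/Chernoff tail estimate for each fixed set --- the latter using that the variance of a Bernoulli indicator is bounded by its mean, which is exactly what produces the $\sqrt{\PP(A)}$ normalization. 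I would take this bound as given.

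Next I would set $\epsilon=\sqrt{4\log(4\mathbb S_{\mathcal A}(2n)/\delta)/n}$, so that the right-hand side above equals $\delta$. On the complementary event, of probability at least $1-\delta$, every $A\in\mathcal A$ satisfies $\PP(A)-\PP_n(A)\le\epsilon\sqrt{\PP(A)}$, hence
$$\PP_n(A)\ge\PP(A)-\epsilon\sqrt{\PP(A)}=\PP(A)\!\left(1-\frac{\epsilon}{\sqrt{\PP(A)}}\right)=\PP(A)\!\left(1-\sqrt{\frac{4\log(4\mathbb S_{\mathcal A}(2n)/\delta)}{n\PP(A)}}\right),$$
which is the first displayed inequality (trivially true, with the convention, when $\PP(A)=0$). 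For the second inequality I would stay on the same event, fix $A$, and abbreviate $c:=\frac4n\log(4\mathbb S_{\mathcal A}(2n)/\delta)$ and $t:=\sqrt{\PP(A)}\ge0$. The first inequality then reads $t^2-\sqrt c\,t-\PP_n(A)\le0$; since this quadratic in $t$ has positive leading coefficient, $t$ is at most its larger root, $t\le\frac12\big(\sqrt c+\sqrt{c+4\PP_n(A)}\big)$. Squaring and using $\sqrt{c\,(c+4\PP_n(A))}\le\frac12\big(c+(c+4\PP_n(A))\big)=c+2\PP_n(A)$ (AM--GM) yields
$$\PP(A)=t^2\le\frac{2c+4\PP_n(A)+2\sqrt{c\,(c+4\PP_n(A))}}{4}\le\frac{4c+8\PP_n(A)}{4}=c+2\PP_n(A),$$
which is the second displayed inequality.

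The only genuinely hard ingredient is the relative VC bound invoked at the start, whose proof needs the ``symmetrization by the variance'' refinement of the standard VC symmetrization argument; this is classical and is precisely the content of the cited references, so I would not reprove it. Granting it, the rest is routine --- the one point worth flagging is that the first inequality is really a quadratic constraint on $\sqrt{\PP(A)}$, which inverts cleanly with a single application of AM--GM to give the second.
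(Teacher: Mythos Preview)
Your proof is correct and follows essentially the same route as the paper: the first inequality is taken from \cite{anthony1993result}, and the second is obtained by rewriting the first as a quadratic inequality in $\sqrt{\PP(A)}$, bounding by the larger root, and squaring. The only cosmetic difference is that the paper uses $(a+b)^2\le 2(a^2+b^2)$ on the squared root while you expand and apply AM--GM to the cross term, which is the same inequality.
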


\begin{proof}
    The first statement is proved in \cite{anthony1993result}.   Let us  prove the second statement. According to the first point, with probability at least $1 - \delta$, we have for all $A \in \mathcal{A}$ $$n \PP_n(A) - n \PP(A) \geq     - \sqrt{  4n\PP(A)    \log(4\mathbb S_{\mathcal A}(2n) /\delta )   },$$   equivalently,  
    $$n\PP(A) - \sqrt{  4n\PP(A)    \log(4\mathbb S_{\mathcal A}(2n) /\delta )   } -  n \PP_n(A)\leq 0 .$$
    Setting $x = \sqrt{n\PP(A)}$, $\alpha = \sqrt{  4 \log(4\mathbb S_{\mathcal A}(2n) /\delta )}$ and $\beta = n\PP_n(A)$, we have that $x^2 - \alpha x - \beta \leq 0.$ Solving the inequality, we find  $$( \alpha - \sqrt{\alpha^2 + 4\beta}) / 2 \leq x \leq ( \alpha + \sqrt{\alpha^2 + 4\beta} ) / 2.$$ 
    By using the fact that \(x\) is positive and squaring both sides, it follows that $x^2 \leq ( \alpha + \sqrt{\alpha^2 + 4\beta})^2 / 4$. And by using the inequality \((a+b)^2 \leq 2(a^2 + b^2)\), we obtain $n\PP(A) = x^2 \leq \alpha^2 + 2\beta = 4 \log(4\mathbb S_{\mathcal A}(2n) /\delta ) + 2n\PP_n(A)$
    which is the desired result by dividing each side of the inequality by \(n\).   
\end{proof}

\vspace{5pt}

For more details, one can also refer to the book by \cite{boucheron2013concentration}, especially chapters 12 and 13, as well as \cite{devroye96probabilistic}. 



The following result is standard and known as the multiplicative Chernoff bound for empirical processes. The following version can be found in \cite{hagerup1990guided}.
\vspace{5pt}
\begin{theorem}\label{lemma=chernoff}
 Let $A$ be a set in $\mathbb R^d$.  For any $\delta \in (0,1)$ and all $n\geq 1$, we have with probability at least $1-\delta$
\begin{align*}
\PP_n(A) \geq \left(1- \sqrt{ \frac{2 \log(1/\delta)  }{ n\PP(A) } } \right) \PP(A)  .
\end{align*}
 In addition, for any $\delta \in (0,1)$ and $n\geq 1$, we have with probability at least $1-\delta$
\begin{align*}
\PP_n(A)  \leq \left(1 +  \sqrt{ \frac{3 \log(1/\delta)   }{ n \PP(A)} }  \right) \PP(A).
\end{align*}

\end{theorem}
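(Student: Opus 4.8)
The plan is to reduce everything to a classical Chernoff bound for a binomial. Write $p := \PP(A)$ and note that $n\PP_n(A) = \sum_{i=1}^n \ind_A(X_i)$ is a sum of $n$ i.i.d.\ Bernoulli$(p)$ variables, hence $n\PP_n(A)\sim\mathrm{Bin}(n,p)$ with mean $\mu := np = n\PP(A)$; if $p=0$ then $\PP_n(A)=0$ a.s.\ and both displays are trivial, so assume $p>0$. Both inequalities will come directly from the two multiplicative Chernoff bounds for the binomial: for every $\eta\in(0,1]$,
\begin{align*}
  \PP\bigl(\mathrm{Bin}(n,p)\le(1-\eta)\mu\bigr)\le e^{-\mu\eta^2/2},
  \qquad
  \PP\bigl(\mathrm{Bin}(n,p)\ge(1+\eta)\mu\bigr)\le e^{-\mu\eta^2/3},
\end{align*}
which are precisely the bounds collected in \cite{hagerup1990guided}. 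For a self-contained derivation one can instead apply the Cramér--Chernoff method: use $\EE[e^{\lambda\ind_A(X)}]=1-p+pe^{\lambda}\le\exp\bigl(p(e^{\lambda}-1)\bigr)$, then Markov's inequality applied to $e^{\pm\lambda\, n\PP_n(A)}$, and optimization over $\lambda$.

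First the lower tail. Put $\eta:=\sqrt{2\log(1/\delta)/(n\PP(A))}$. If $\eta\ge 1$ then $(1-\eta)\PP(A)\le 0\le\PP_n(A)$, so the first display holds with probability one. If $\eta\in(0,1)$, the first Chernoff bound gives
$$\PP\Bigl(\PP_n(A)<(1-\eta)\PP(A)\Bigr)=\PP\bigl(n\PP_n(A)<(1-\eta)\mu\bigr)\le e^{-\mu\eta^2/2}=\delta,$$
and substituting the value of $\eta$ yields exactly the stated inequality. The upper tail is treated identically with $\eta:=\sqrt{3\log(1/\delta)/(n\PP(A))}$: whenever this $\eta$ is at most $1$, the second Chernoff bound gives $\PP\bigl(n\PP_n(A)\ge(1+\eta)\mu\bigr)\le e^{-\mu\eta^2/3}=\delta$, which is the claim after substituting $\eta$.

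The one point I would handle with care --- effectively the only obstacle --- is the upper tail in the regime $\sqrt{3\log(1/\delta)/(n\PP(A))}>1$, i.e.\ $\log(1/\delta)>n\PP(A)$: here the constant-$3$ form is not available as written, and one has to fall back on the all-$\eta$ Chernoff bound $\PP\bigl(\mathrm{Bin}(n,p)\ge(1+\eta)\mu\bigr)\le\bigl(e^{\eta}(1+\eta)^{-(1+\eta)}\bigr)^{\mu}$ (equivalently a Bennett-type bound) and verify it still dominates $\delta$; alternatively, one records the lemma under the innocuous proviso $\log(1/\delta)\le n\PP(A)$, which costs nothing since the only use of this lemma in the paper --- the proof of Proposition~\ref{contre_ex} --- invokes it with a relative deviation bounded by $1/2$. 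Beyond this boundary bookkeeping the argument is entirely routine.
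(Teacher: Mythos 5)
Your argument is correct and is essentially the route the paper intends: the paper gives no proof of this theorem, only the citation to \cite{hagerup1990guided}, and what you do is re-derive exactly those classical multiplicative Chernoff bounds for $n\PP_n(A)\sim\mathrm{Bin}(n,\PP(A))$ via the Cram\'er--Chernoff method, so the lower-tail half and the upper-tail half in the range $\eta\le 1$ are complete and match the cited source. The boundary issue you flag for the upper tail is not mere bookkeeping, though: when $\log(1/\delta)$ is large compared with $n\PP(A)$ the constant-$3$ form genuinely fails, and your first suggested repair (checking that the all-$\eta$ bound $\bigl(e^{\eta}(1+\eta)^{-(1+\eta)}\bigr)^{\mu}$ still dominates $\delta$) cannot succeed in general. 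Indeed, in the Poisson-type regime the exact Chernoff exponent is $\mu\bigl[(1+\eta)\log(1+\eta)-\eta\bigr]\sim\mu\,\eta\log\eta$, which is eventually much smaller than $\mu\eta^{2}/3$; concretely, with $n\PP(A)=0.1$ and $\delta=e^{-10}$ the stated upper-tail inequality asks that $\PP\bigl(n\PP_n(A)\ge 2\bigr)\le e^{-10}$, while this probability is of order $5\cdot 10^{-3}$. So the statement as printed is only valid with a restriction of the type $\log(1/\delta)\lesssim n\PP(A)$, and your second option --- recording the proviso --- is the correct resolution; it is also harmless, since the only invocation in the paper (the proof of Proposition~\ref{contre_ex}) arranges $n\PP(V_0)\ge 3\cdot 4\log 4$ precisely so that the relative deviations are at most $1/2$.
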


\vspace{5pt}
This lemma is useful for proving Propositions \ref{uniform tree not regular} and \ref{centered tree not regular}.
\begin{lemma}\label{lemme moment}
   Let  \( M, (M_i)_{i=1,\ldots, N} \) be a collection of independent and identically distributed random variables such that $\mathbb E[ M^4] <\infty$ . It holds

\[
\mathbb{E}\left[ \left( \sum_{i=1}^{N} M_i \right)^4\right] = N \mathbb{E}(M^4) + 3N (N-1) \mathbb{E}(M^2)^2.
\]
\end{lemma}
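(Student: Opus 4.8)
The plan is to expand the fourth power by distributivity and then group the resulting terms according to the pattern of coincidences among the summation indices. Writing $S_N=\sum_{i=1}^N M_i$, we have
$$ S_N^4=\sum_{i_1,i_2,i_3,i_4=1}^N M_{i_1}M_{i_2}M_{i_3}M_{i_4}, $$
so that, taking expectations and using independence of the $M_i$'s,
$$ \mathbb{E}[S_N^4]=\sum_{i_1,i_2,i_3,i_4=1}^N \mathbb{E}\big[M_{i_1}M_{i_2}M_{i_3}M_{i_4}\big], $$
and each summand factorises as a product of moments of $M$ determined solely by the set partition that the tuple $(i_1,i_2,i_3,i_4)$ induces on the four positions $\{1,2,3,4\}$ (two positions being in the same block exactly when they carry the same index).

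Next I would classify the $N^4$ tuples by this partition type, of which there are five: $(4)$ (all indices equal), $(3,1)$, $(2,2)$, $(2,1,1)$, and $(1,1,1,1)$ (all indices distinct). By independence a tuple of type $(4)$ contributes $\mathbb{E}[M^4]$, a tuple of type $(2,2)$ contributes $\mathbb{E}[M^2]^2$, while any tuple whose partition contains a singleton block contributes a factor $\mathbb{E}[M]$ and therefore vanishes, since — as is the case in the applications in the proofs of Propositions~\ref{uniform tree not regular} and~\ref{centered tree not regular}, where $M_i=V_i^{k,j}E_i$ with $V_i^{k,j}$ centered — the $M_i$ are centered. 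Hence only types $(4)$ and $(2,2)$ survive, and it remains to count them: there are exactly $N$ tuples of type $(4)$, one per common index value, and $3\,N(N-1)$ tuples of type $(2,2)$ — the factor $3$ being the number of ways of splitting the four positions into two unordered pairs, and $N(N-1)$ the number of ways of assigning two distinct index values to those (position-distinguishable) pairs.

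Summing the two surviving contributions then gives
$$ \mathbb{E}[S_N^4]=N\,\mathbb{E}[M^4]+3N(N-1)\,\mathbb{E}[M^2]^2, $$
which is the claim. The only mildly delicate point is the combinatorial bookkeeping in the $(2,2)$ case — pinning down the constant $3$ and the factor $N(N-1)$; everything else is a routine consequence of independence together with the vanishing of the first moment. As a cross-check, one may instead invoke additivity of cumulants, $\kappa_4(S_N)=N\kappa_4(M)$ and $\operatorname{var}(S_N)=N\operatorname{var}(M)$, combined with the identity $\mathbb{E}[S_N^4]=\kappa_4(S_N)+3\operatorname{var}(S_N)^2$ valid for centered variables, which yields the same expression after substituting $\kappa_4(M)=\mathbb{E}[M^4]-3\mathbb{E}[M^2]^2$.
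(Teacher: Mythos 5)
Your proof is correct and follows essentially the same route as the paper: expand the fourth power, use independence to kill every term whose index pattern contains a singleton, and count the all-equal tuples ($N$) and the pair-pair tuples ($3N(N-1)$). Note that both your argument and the paper's rely on the $M_i$ being centered, a hypothesis the lemma statement omits but which you rightly flagged and which holds in the applications.
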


\begin{proof}
We have
   \[
   \left( \sum_{i=1}^{N} M_i \right)^4 = \sum_{i,p,q,r=1}^{N} M_i M_p M_q M_r.
   \]
Since the \( M_i \) are independent and centered, the expectation of each product \( M_i M_p M_q M_r \) will be zero if at least one of the indices is distinct. This restricts the analysis to cases where all indices are identical or two pairs of indices are identical. If all indices are identical, i.e. \( i = p = q = r \), then the expectation of \( M_i^4 \) contributes to the sum: $ \sum_{i=1}^{N} \mathbb{E}(M_i^4) = N \, \mathbb{E}(M^4)$. When two indices are identical and the other two are also identical, i.e. \( i = p \neq q = r \), we get a product of the form \( M_i^2 M_q^2 \). We have 3 choices either $i$ is equal to \( p \), \( q \), or \( r \)). The remaining two indices must necessarily be equal. This yields: $ 3 \sum_{i \neq q} \mathbb{E}(M_i^2) \, \mathbb{E}(M_q^2) = 3N (N - 1) \, \mathbb{E}(M^2)^2.$ Combining the two terms, this proves the result.
\end{proof}

\newpage
\bibliography{b2}

\end{document}